\newtheorem{thm}{Theorem}[section]
\newtheorem{cor}[thm]{Corollary}
\newtheorem{lem}[thm]{Lemma}
\newtheorem{prop}[thm]{Proposition} 
\theoremstyle{definition}
\newtheorem{exa}[thm]{Example}
\numberwithin{equation}{section}
\DeclareMathOperator{\vect}{vect}
\newcommand{\rperp}[1]{#1^{\perp}}
\newcommand{\Dd}{\mathcal{D}}
\newcommand{\Hh}{\mathcal{H}}
\newcommand{\Tt}{\mathcal{T}}
\newcommand{\bp}{\bar{p}}
\newcommand{\bq}{\bar{q}}
\newcommand{\bh}{\bar{h}}
\newcommand{\bT}{\bar{T}}
\newcommand{\sT}{T^\ast}
\newcommand{\pp}{\textrm{\ul{p}}}
\newcommand{\lala}{\underline{\lambda}}
\newcommand{\cohnull}[1]{\mathrm{coh}_0\,#1}
\newcommand{\LF}[2]{\langle #1,#2\rangle}
\newcommand{\DLF}[2]{\langle\!\langle #1,#2\rangle\!\rangle}
\newcommand{\Der}{\operatorname{D^b}}
\newcommand{\Hom}{\operatorname{Hom}}
\newcommand{\End}{\operatorname{End}}
\newcommand{\Ext}{\operatorname{Ext}}
\newcommand{\dual}{\operatorname{D}}
\newcommand{\gen}[1]{\langle #1\rangle}
\newcommand{\eul}[1]{\langle #1\rangle}
\newcommand{\coh}{\operatorname{coh}}
\newcommand{\Qcoh}[1]{\operatorname{Qcoh}\,{#1}}
\newcommand{\Groth}{\operatorname{K_0}}
\newcommand{\Pic}{\operatorname{Pic}}
\newcommand{\can}{\mathrm{can}}
\newcommand{\her}{\mathrm{her}}
\newcommand{\cox}{\mathrm{cox}}
\newcommand{\squid}{\mathrm{squid}}
\newcommand{\width}{\operatorname{w}}
\renewcommand{\AA}{\mathbb{A}}
\newcommand{\EE}{\mathbb{E}}
\newcommand{\LL}{\mathbb{L}}
\newcommand{\XX}{\mathbb{X}}
\newcommand{\YY}{\mathbb{Y}}
\newcommand{\ZZ}{\mathbb{Z}}
\newcommand{\NN}{\mathbb{N}}
\newcommand{\vc}{{\vec{c}}}
\newcommand{\vu}{\vec{u}}
\newcommand{\vom}{\vec{\omega}}
\newcommand{\vx}{{\vec{x}}}
\newcommand{\vy}{{\vec{y}}}
\newcommand{\ovp}{\overline{p}}
\newcommand{\lcm}{\operatorname{lcm}}
\newcommand{\incl}{\hookrightarrow}
\newcommand{\Ocr}{\mathcal{O}}
\newcommand{\rk}{\operatorname{rk}}
\newcommand{\class}[1]{\left[#1\right]}
\newcommand{\HVCenter}[1]{\setbox 0=\hbox{#1}%
        \dimen0=\wd0%
        \dimen1=\ht0%
        \divide\dimen0 by 2%
        \divide\dimen1 by 2%
        \hskip -\dimen0%
        \lower \dimen1%
        \box0%
        \hskip -\dimen0}
\newcommand{\HBCenter}[1]{\setbox 0=\hbox{#1}%
        \dimen0=\wd0%
        \dimen1=\ht0%
        \divide\dimen0 by 2%
        \hskip -\dimen0%
        \box0%
        \hskip -\dimen0}
\newcommand{\HTCenter}[1]{\setbox 0=\hbox{#1}%
        \dimen0=\wd0%
        \dimen1=\ht0%
        \divide\dimen0 by 2%
        \hskip -\dimen0%
        \lower \dimen1%
        \box0%
        \hskip -\dimen0}
\newcommand{\RTCenter}[1]{\setbox 0=\hbox{#1}%
        \dimen0=\wd0%
        \dimen1=\ht0%
        \hskip -\dimen0%
        \lower \dimen1%
        \box0%
        \hskip -\dimen0}
\newcommand{\LTCenter}[1]{\setbox 0=\hbox{#1}%
        \dimen0=\wd0%
        \dimen1=\ht0%
        \lower \dimen1%
        \box0%
        \hskip -\dimen0}
\newcommand{\RVCenter}[1]{\setbox 0=\hbox{#1}%
        \dimen0=\wd0%
        \dimen1=\ht0%
        \divide\dimen1 by 2%
        \hskip -\dimen0%
        \lower \dimen1%
        \box0%
        \hskip -\dimen0}
\newcommand{\RBCenter}[1]{\setbox 0=\hbox{#1}%
        \dimen0=\wd0%
        \dimen1=\ht0%
        \hskip -\dimen0%
        \box0%
        \hskip -\dimen0}
\newcommand{\LVCenter}[1]{\setbox 0=\hbox{#1}%
        \dimen1=\ht0%
        \divide\dimen1 by 2%
        \lower \dimen1%
        \box0%
        \hskip -\dimen0}
\newcommand{\spitz}[1]{\langle #1\rangle}
\newcommand{\si}{\sigma}
\newcommand{\la}{\lambda}
\newcommand{\La}{\Lambda}
\newcommand{\ul}[1]{\underline{#1}}
\newcommand{\ra}{\rightarrow}
\newcommand{\lra}{\longrightarrow}
\renewcommand{\mod}{\operatorname{mod}}
\newcommand{\Mod}{\operatorname{Mod}}
\newcommand{\Ker}{\operatorname{ker}}
\newcommand{\Oo}{\mathcal{O}}
\newcommand{\sz}[1]{{\langle #1 \rangle}}
\newcommand{\bs}{\textbf{s}}
\newcommand{\bw}{\mathbf{w}}
\newcommand{\bz}{\textbf{z}}
\newcommand{\al}{\alpha}
\newcommand{\be}{\beta}
\newcommand{\ka}{\kappa}
\newcommand{\epi}{\twoheadrightarrow}
\newcommand{\Knull}{\mathrm{K}_0}
\newcommand{\QQ}{\mathbb{Q}}
\begin{document}

\title[Concealed-canonical algebras]{Extremal properties for concealed-canonical algebras}

\author[M. Barot]{Michael Barot}
\address{Instituto de Matem\'aticas, Universidad Nacional Aut\'onoma
de M\'exico, Ciudad Universitaria, C.P. 04510, Distrito Federal, Mexico}
\email{barot@matem.unam.mx}

\author[D. Kussin]{Dirk Kussin}
\address{Dipartimento di Informatica, Settore di Matematica, Universit\`{a} degli Studi di
  Verona, 37134 Verona, Italy}
\email{dirk@math.uni-paderborn.de}

\author[H. Lenzing]{Helmut Lenzing}
\address{Institut f\"{u}r Mathematik, Universit\"at Paderborn, 33095
  Paderborn, Germany}
\email{helmut@math.uni-paderborn.de}

\date{}

\begin{abstract}
Canonical algebras, introduced by C.M.~Ringel in 1984, play an
  important role in the representation theory of finite dimensional
  algebras. They are equipped with a large contact surface to many
  further mathematical subjects like function theory, 3-manifolds,
  singularity theory, commutative algebra, algebraic geometry and mathematical physics. We
  show in this paper that canonical algebras are characterized by a
  number of interesting extremal properties (among
  concealed-canonical algebras, that is, endomorphism rings of tilting bundles on a weighted projective line).
   We also investigate the corresponding class of algebras antipodal to
  canonical ones.  Our study yields new insight in the nature of concealed-canonical algebras, and sheds new light on an old question: Why are the
   canonical algebras canonical?
\end{abstract}

\subjclass[2010]{Primary: 16G20, Secondary: 14H45}
\keywords{canonical algebra, concealed-canonical algebra, weighted projective line, H\"{u}bner reflection, tubular mutation}

\maketitle

\section{Introduction}\label{sect:intro}

Canonical algebras were introduced by C.~M.\ Ringel in 1984
\cite{Ringel:1984} in order to solve an intriguing problem concerning
the representation type of a certain class of finite dimensional
algebras, now called tubular. When introducing weighted projective
lines in 1987 \cite{Geigle:Lenzing:1987}, Geigle and Lenzing showed
that canonical algebras arise as endomorphism rings of naturally
formed tilting bundles, consisting of line bundles. Due to this fact,
canonical algebras own a large contact surface to other parts of mathematics,
classical and modern.

Indeed, the \emph{canonical relations}
\begin{equation}
\label{eq:can_rel}
x_i^{p_i}=x_2^{p_2}-\la_ix_1^{p_i},\quad i=3,\ldots, t
\end{equation} defining the canonical algebras
already appeared 1882, respectively 1884, in the work of
H.\ Poincar\'{e}~\cite[p.~237]{Poincare:1882} (see also~\cite[p.\
183]{Poincare:Stillwell:1985}) and  F.\ Klein~\cite{Klein:1884}
yielding a link to Fuchsian singularities, respectively Kleinian (i.e.\ simple)
singularities. For modern accounts on this aspect we refer to work of J.\
Milnor~\cite{Milnor:1975} and W.\ D.\ Neumann~\cite{Neumann:1977}, compare
also~\cite{Lenzing:1994}.

An important feature of the canonical relations is the (graded)
factoriality of the commutative $k$-algebra
$S(\pp,\lala)=k[x_1,\ldots,x_t]/I$, where the ideal $I$ is
generated by the canonical relations~\eqref{eq:can_rel}. Moreover, by Mori~\cite{Mori:1977} and
Kussin~\cite{Kussin:1998} graded factoriality determines the algebras
$S(\pp,\lala)$ uniquely (among the
affine algebras of Krull dimension
two).

Canonical algebras belong to the larger class of \emph{concealed-canonical algebras}, see~\cite{Lenzing:Pena:1999,Lenzing:Meltzer:1996}, a class containing the tame concealed algebras, see~\cite{Happel:Vossieck:1983} and~\cite{Ringel:1984}. Concealed-canonical algebras may be defined as endomorphism rings of tilting bundles on a weighted projective line. By \cite{Skowronski:1996} and \cite{Lenzing:Pena:1999}, they are also characterized by the existence of a separating tubular family of sincere, standard stable tubes. Though the concepts now exist for many years, canonical and concealed-canonical algebras continue to be a topic of much current research. We just mention their appearance in recent papers dealing with the following subjects:
\begin{itemize}
\item[$\circ$] the theory of finite dimensional selfinjective algebras~\cite{Karpicz:Skowronski:Yamagata:2011},
\item[$\circ$] the invariant theory of module varieties~ \cite{Bobinski:2008b}, \cite{Bobinski:2008}, \cite{Bobinski:2008a},
\item[$\circ$] explicit matrix representations for exceptional modules, see \cite{Kussin:Meltzer:2007}, \cite{Meltzer:2007}, \cite{Dowbor:Meltzer:Mroz:2010},
\item[$\circ$] the study of infinite dimensional modules~\cite{Reiten:Ringel:2006}, \cite{Angeleri:Kussin:2013},
\item[$\circ$] the investigation of cluster categories~\cite{Barot:Kussin:Lenzing:2008},~\cite{Barot:Kussin:Lenzing:2010},
\item[$\circ$] the study of (flags of) invariant subspaces for nilpotent operators \cite{Simson:2007}, \cite{Ringel:Schmidmeier:2008}, \cite{Kussin:Lenzing:Meltzer:2012crelle}, \cite{Kussin:Lenzing:Meltzer:2012prep},
\item[$\circ$] singularity theory and categories of matrix factorizations~\cite{KST-1}, \cite{KST-2}, \cite{Lenzing:Pena:2011}, \cite{Kussin:Lenzing:Meltzer:2012adv},
\item[$\circ$] mathematical physics~\cite{Cecotti:2012},~\cite{Cecotti:DelZotto:2011}.
\end{itemize}

While for the tame domestic case, the concealed-canonical algebras are completely known through the Happel-Vossieck list~\cite{Happel:Vossieck:1983}, their structure may be quite complicated, if we allow the algebras to be tubular or wild, and many natural questions still remain open. In this paper we present another record of \emph{extremal properties}
characterizing canonical algebras, completing research by
Ringel~\cite{Ringel:2009} on the challenging question ``\emph{Why are the canonical algebras canonical?}''. We further study concealed-canonical algebras with properties antipodal to the canonical ones.

The paper is organized as follows. Section~\ref{sect:max_conditions}
presents our main results on characterizations of canonical algebras
in terms of maximality conditions. In Section~\ref{sect:setup} we
recall those properties on weighted projective lines that are
needed for their proofs
in Section~\ref{sect:proofs}. Section~\ref{sect:min_condition} deals with
concealed-canonical algebras with properties antipodal to canonical
ones. In Section~\ref{sect:instructive} we show that characterizations
of canonical algebras \emph{within the class of tame concealed
  algebras} have a tendency not to generalize to the tubular or wild
case.

As a general reference for weighted projective lines we refer
to \cite{Geigle:Lenzing:1987,Lenzing:Pena:1997}. Concerning finite
dimensional algebras and their representations, the monographs
\cite{Simson:Skowronski:2007,Simson:Skowronski:2007a} and
\cite{Ringel:1984} contain the relevant information.

\section{Extremal properties of canonical
  algebras} \label{sect:max_conditions} In this section we present the
main results of our paper, all expressing a certain extremal property
of canonical algebras.  Let $\XX=\XX(\pp,\lala)$ be a weighted
projective line given by
weight type $\pp=(p_1,\ldots,p_t)$ and parameter
sequence $\lala=(\la_3,\ldots,\la_t)$.
We denote by $\LL(\pp)$, or just $\LL$, the rank one abelian group
generated by elements $\vx_1,\ldots,\vx_t$ subject to the relations
$p_1\vx_1=\cdots=p_t\vx_t=:\vc$.
 Then $T_{\can}$, the direct sum
of all line bundles $\Oo(\vx)$ with $0\leq\vx\leq\vc$, is called the
\emph{canonical tilting bundle} on $\XX$. Its endomorphism ring is the
canonical algebra $\La=\La(\pp,\lala)$ in the sense of
Ringel~\cite{Ringel:1984}, given by the same data $\pp$ and $\lala$,
see also Section~\ref{ssect:tilting}.
Throughout we denote by $t=t(\XX)$, or just $t$, \emph{the number of
  weights} $p_i\geq2$ and by $\ovp=\ovp(\XX)$, or just $\ovp$, the
least common multiple of $p_1,\ldots,p_t$.

The complexity of the classification of indecomposables for
 $\coh\XX$, the category of coherent sheaves on $\XX$, respectively
 for the category $\mod\La$ of finite dimensional right $\La$-modules,
 is determined by the (orbifold) Euler characteristic
 of $\XX$
$$
\chi_\XX=2 - \sum_{i=1}^t \left(1-\frac{1}{p_i}\right).
$$
The representation type for both categories is tame domestic if
$\chi_\XX>0$, tame tubular for $\chi_\XX=0$ and wild for $\chi_\XX<0$,
see~\cite{Geigle:Lenzing:1987} as well as \cite{Lenzing:Reiten:2006,Lenzing:Meltzer:1993} and \cite{Lenzing:Pena:1997} concerning more specific information.

For notations and definitions, otherwise, we refer to
Section~\ref{sect:setup}.  We assume all tilting objects
$T=\oplus_{i=1}^nT_i$ on $\XX$ to be \emph{multiplicity-free}, that
is, $T_1,\ldots,T_n$ to be pairwise non-isomorphic. Throughout, we
work over an a base field $k$ which is algebraically closed. If not
stated otherwise, modules will always be right modules.

\subsection*{Maximal number of line bundles}
The following result allows two different interpretations. First, it
expresses unicity of the canonical tilting bundle if $T$ attains the
maximal possible number of line bundle summands. Second, it shows that
the same assertion holds if we minimize the differences between ranks
of the indecomposable summands of $T$. Note that the case of two
weights behaves somewhat special because there exist tilting bundles,
consisting of line bundles, whose endomorphism rings are not
canonical.

\begin{thm}[Maximal number of line bundles]
  \label{thm:main0} \label{thm:linebdle_max}
  Let $T$ be a tilting bundle on $\XX$ whose indecomposable summands
  all have the same rank $r$. Then $r$ equals one.
  Moreover, assuming $t(\XX)\neq2$, then $T$ is
  isomorphic to $T_{\can}$ up to a line bundle twist and, accordingly, the
  endomorphism ring of $T$ is isomorphic to a canonical algebra.
\end{thm}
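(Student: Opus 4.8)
The plan is to work in the category $\coh\XX$ and to exploit the behaviour of rank and the structure of tilting bundles on a weighted projective line. First I would recall that on $\XX$ any tilting bundle $T=\bigoplus_{i=1}^n T_i$ has exactly $n = \rk\Knull(\coh\XX) = 2 + \sum_{i=1}^t(p_i-1)$ indecomposable summands, and that the endomorphism ring $\End(T)$ has finite global dimension with no oriented cycles in its quiver. The key numerical input is the formula for the rank of the canonical summands: since $T_{\can}=\bigoplus_{0\le\vx\le\vc}\Oo(\vx)$ consists of line bundles, it has the maximal possible number of line bundle summands compatible with the tilting condition. So the first half of the statement — that $r=1$ — should follow once I show that a tilting bundle whose summands all have rank $r$ forces $r=1$. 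To see this, I would consider the determinant/rank functional: writing $\class{T_i}$ in $\Knull(\coh\XX)$ and using that $\{\class{T_i}\}$ is a $\ZZ$-basis of $\Knull(\coh\XX)$, I compute that the class $\class{\Oo}$ of the structure sheaf (or any line bundle) must be an integer combination of the $\class{T_i}$; comparing ranks, $1 = r\cdot(\text{integer})$, hence $r=1$.

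The more substantial second half — that $T\cong T_{\can}$ up to a line bundle twist when $t\ne 2$ — I would approach via the classification of tilting bundles consisting entirely of line bundles. Since $r=1$, every summand $T_i$ is a line bundle $\Oo(\vx_i)$ for some $\vx_i\in\LL(\pp)$. After twisting by $\Oo(-\vx_1)$ I may assume $\Oo\in\add T$. Now I would use the no-self-extension and generation conditions: $\Ext^1(\Oo(\vx_i),\Oo(\vx_j))=0$ for all $i,j$ translates, via the known cohomology of line bundles on $\XX$, into a condition confining all the $\vx_i$ to lie in the interval $[0,\vc]$ (or a translate of it); the counting $n=2+\sum(p_i-1)$ then forces the set $\{\vx_i\}$ to be exactly $\{\vx : 0\le\vx\le\vc\}$. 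The role of the hypothesis $t\ne 2$ enters precisely here: for $t=2$ the poset $[0,\vc]$ in $\LL(p_1,p_2)$ has "too much room", and there genuinely exist other tilting bundles of line bundles (as the paragraph before the theorem warns), so the rigidity argument only pins down the configuration uniquely when $t\ge 3$ (the case $t\le 1$ being the domestic/tame hereditary situation, where the statement is classical).

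The main obstacle I anticipate is the combinatorial heart of the second step: showing that the only antichain-free, $\Ext^1$-orthogonal family of $2+\sum(p_i-1)$ line bundles containing $\Oo$ is the canonical one. This requires a careful analysis of the partial order on $\LL(\pp)$ together with the vanishing conditions $\Ext^1(\Oo(\vx),\Oo(\vy))=0 \iff \vy-\vx \not\le -\vx_i$ hold, i.e.\ $\vy - \vx \not\in \vc + \LL_{+}$ shifted appropriately, and it is here that the hypothesis $t\ne 2$ must be used to exclude exotic configurations. I would organize this as a lemma about "line-bundle tilting configurations" proved by induction on $\ovp$ or on $\sum(p_i-1)$, peeling off extremal weights. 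Once the configuration is pinned down, identifying $\End(T)$ with the canonical algebra $\La(\pp,\lala)$ is immediate from the computation of $\Hom$-spaces between the $\Oo(\vx)$, $0\le\vx\le\vc$, which recovers exactly Ringel's canonical relations \eqref{eq:can_rel}.
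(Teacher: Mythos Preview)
Your argument for $r=1$ is correct and coincides with the paper's: since the $[T_i]$ form a $\ZZ$-basis of $\Knull(\coh\XX)$, the class of a line bundle is an integral combination of them, and comparing ranks gives $r\mid 1$.

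The second half, however, contains a genuine gap. After normalizing so that $\Oo\in\add T$, the mutual $\Ext^1$-vanishing does \emph{not} force the degrees $\vx_i$ into the interval $[0,\vc]$. Lemma~\ref{lem:ext_line} only yields $-\vc\le \vx_i-\vx_j\le\vc$ for all pairs, so with $0\in J$ (and even with the additional degree normalization $\delta(\vx)\ge 0$) you only get $-\vc\le\vx\le\vc$. There are elements in that range not lying in $[0,\vc]$: the \emph{mixed} elements $a_i\vx_i-b_j\vx_j$ with $0<a_i<p_i$, $0<b_j<p_j$, $i\neq j$. So the counting argument cannot start, and your proposed induction on $\ovp$ has no clear leverage on these mixed configurations.

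The paper closes this gap by a case distinction. If $\vc\in J$, then applying Lemma~\ref{lem:ext_line} to both $0$ and $\vc$ sandwiches every $\vx\in J$ into $[0,\vc]$, and cardinality finishes. If $\vc\notin J$, then by cardinality some mixed element $\vy=a_1\vx_1-a_2\vx_2$ must occur; one then shows, again via Lemma~\ref{lem:ext_line} applied to $\vy$ and an arbitrary $\vx\in J$, that in fact $J\subset\ZZ\vx_1+\ZZ\vx_2$. This is where $t\neq 2$ enters decisively: for $t\ge 3$ there is an exceptional simple sheaf $S_3$ concentrated at the third weight point with $\Hom(\Oo,S_3)=0$; since $S_3(\vx_i)=S_3$ for $i\neq 3$, every line bundle $\Oo(\vx)$ with $\vx\in\ZZ\vx_1+\ZZ\vx_2$ satisfies $\Hom(\Oo(\vx),S_3)=0$, and as $T$ is a bundle also $\Ext^1(T,S_3)=0$, contradicting that $T$ is tilting. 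So the mechanism excluding exotic configurations is not a poset induction but a perpendicularity obstruction.
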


\subsection*{Homogeneity}\label{ssect:homogeneity}

We call a tilting sheaf $T$ on $\XX$ \emph{homogeneous} in $\coh\XX$ (resp.\ in $\Der(\coh\XX)$)
if for any two indecomposable summands $T'$ and $T''$ of $T$ there exists a self-equivalence $u$ of the abelian category $\coh\XX$ (resp.\ of the triangulated category $\Der(\coh\XX)$) such that $u$  maps $T'$ to $T''$. By its very definition the canonical tilting bundle $T_\can$ is both homogeneous in $\coh\XX$ and in $\Der(\coh\XX)$. Our next theorem implies that, under mild restrictions, the canonical tilting bundle is characterized by a variety of homogeneity conditions.

\begin{thm}[Homogeneity]\label{thm:homogeneity}
Let $T$ be a tilting sheaf on $\XX$ with indecomposable summands $T_1,\ldots,T_n$. Assume that one of the following conditions is satisfied.
\begin{itemize}
\item[(i)] $T$ is homogeneous in $\coh\XX$.
\item[(ii)] $\XX$ is not tubular, and  $T$ is homogeneous in $\Der(\coh\XX)$.
\item[(iii)] $\XX$ is not tubular, and the perpendicular categories $\rperp{T_i}$, formed in $\coh\XX$ all have the same Coxeter polynomial.
\item[(iv)] $\XX$ is not tubular, and the one-point extensions $A[P_i]$ of $A=\End(T)$ with the $i$-th indecomposable projective $A$-module all have the same Coxeter polynomial.
\end{itemize}
Then all indecomposable summands of $T$ have rank one and, assuming $t(\XX)\neq2$, the tilting bundle $T$ is isomorphic to the canonical tilting bundle $T_\can$ up to
a line bundle twist.
\end{thm}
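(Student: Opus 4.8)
The plan is to reduce all four homogeneity conditions to a single numerical consequence, namely that all indecomposable summands $T_i$ have the same rank, and then to invoke Theorem~\ref{thm:linebdle_max}. The heart of the argument is thus to show, for each of (i)--(iv), that the summands of $T$ are forced to be equi-rank; once that is established, Theorem~\ref{thm:linebdle_max} immediately yields $r=1$ and, under the hypothesis $t(\XX)\neq2$, that $T\cong T_\can$ up to a line bundle twist.

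First I would treat (i) and (ii) together. A self-equivalence $u$ of $\coh\XX$ permutes the rank function up to the action it induces on $\Groth(\XX)$; since any self-equivalence of $\coh\XX$ (or of $\Der(\coh\XX)$ when $\XX$ is not tubular) preserves the rank up to sign --- indeed, in the non-tubular case the rank is, up to a scalar, the unique linear form on $\Groth(\XX)$ that is a common eigenvector of the (inverse) Coxeter transformation with eigenvalue $1$ outside the tubular regime, equivalently rank is intrinsically characterized as $-\langle\!\langle\omega,-\rangle\!\rangle$ or via the Auslander--Reiten translation acting trivially on it --- the equivalence $u$ sends an indecomposable of rank $r$ to one of rank $\pm r$, hence $\pm r$ depending on orientation, but for sheaves on $\XX$ the rank of an indecomposable bundle is positive, so $|{\rk T'}|=|{\rk T''}|$ forces $\rk T_i$ constant. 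The tubular case is genuinely exceptional here (self-equivalences of $\Der(\coh\XX)$ act on ranks via $\SL_2(\ZZ)$), which is why (ii)--(iv) carry the non-tubular hypothesis; this is the point I expect to require the most care, and it is where the structural input from Section~\ref{sect:setup} on the group of self-equivalences and on Coxeter transformations will be used.

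For (iii) and (iv) the strategy is to convert ``same Coxeter polynomial'' into ``same rank''. The perpendicular category $\rperp{T_i}$ formed in $\coh\XX$ is equivalent to $\coh\XX'$ for a weighted projective line $\XX'$ (or to a module category for a tame concealed algebra), and its Coxeter polynomial encodes, among other invariants, the Euler characteristic and the weight data; more relevantly, there is a standard formula expressing the characteristic polynomial of the Coxeter transformation of $A=\End(T)$, and of the one-point extensions $A[P_i]$, in terms of $\rk T_i$ together with the common Coxeter polynomial of $\coh\XX$. Concretely I would compute $\chi_{A[P_i]}(T)$ using the recursion for Coxeter polynomials under one-point extension (the dimension vector of $P_i$ pairs with the others through the Euler form, which on $\coh\XX$ is controlled by rank and degree), isolate the dependence on $\rk T_i$, and conclude that equality of these polynomials for all $i$ forces $\rk T_i$ to be independent of $i$. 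The equivalence of (iii) and (iv) with each other should follow from the derived equivalence $\Der(\coh\rperp{T_i})\simeq\Der(\coh\XX/\langle T_i\rangle)$ matching up the relevant one-point-extended algebra, so in practice it suffices to carry out the computation once.

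Having shown in all four cases that the ranks of $T_1,\dots,T_n$ coincide, I would finish by quoting Theorem~\ref{thm:linebdle_max}: the common rank is $1$, and if moreover $t(\XX)\neq2$ then $T$ is isomorphic to $T_\can$ up to a line bundle twist. The main obstacle, as indicated, is the first reduction --- pinning down exactly how self-equivalences (of the abelian, resp.\ derived, category) act on the rank function, and thereby seeing precisely why the tubular case must be excluded in (ii)--(iv) but not in (i); everything after that is bookkeeping with Euler forms and Coxeter polynomials.
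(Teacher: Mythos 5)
Your overall strategy coincides with the paper's: each case is reduced to the claim that the $T_i$ all have the same rank, after which Theorem~\ref{thm:linebdle_max} finishes the argument, and for case (i) your argument (self-equivalences of $\coh\XX$ are rank-preserving, by Lenzing--Meltzer) is exactly the one in the paper. The differences lie in (ii)--(iv). For (ii) you invoke the classification of self-equivalences of $\Der(\coh\XX)$ in the non-tubular case (generated by automorphisms of $\coh\XX$ and the shift, hence rank-preserving up to sign); this works and is arguably more direct, but it leans on the full Lenzing--Meltzer classification, whereas the paper folds (ii) into the implication chain of Proposition~\ref{prop:homogeneous} and settles it by the same Coxeter-polynomial computation as (iii) and (iv). For (iii)/(iv) your plan is the paper's in spirit, but two points need repair. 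First, the link between (iii) and (iv) is \emph{not} a derived equivalence between $\rperp{T_i}$ and the one-point extension $A[P_i]$: these have Grothendieck groups of ranks $n-1$ and $n+1$, so no such equivalence can exist. The paper instead uses the reciprocity formula \eqref{eq:reciprocity}, which expresses both Coxeter polynomials $\psi'_i$ and $\bar\psi_i$ through the Hilbert--Poincar\'e series $P_{T_i}=\sum_{m\geq0}\LF{\class{T_i}}{\class{\tau^m T_i}}x^m$, so that either hypothesis forces all the $P_{T_i}$ to coincide. Second, the Coxeter polynomial of $A[P_i]$ is not a function of $\rk T_i$ and the Coxeter polynomial of $\coh\XX$ alone --- it depends on all of $P_{T_i}$ --- so the decisive step you defer to ``bookkeeping'' is really the identity $\tau^{\bp}x=x+\rk(x)\,\delta(\vom)\,\bw$, which yields $\al_{\bp}-\al_0=\rk(T_i)^2\,\delta(\vom)$ for the coefficients of $P_{T_i}$; it is precisely here that non-tubularity ($\delta(\vom)\neq0$) is needed to recover the rank. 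With these two corrections your argument becomes the paper's.
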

In particular, condition (iii) (resp.\ (iv)) is satisfied if the categories $\rperp{T_i}$ (resp.\ the algebras $A[P_i]$) are pairwise derived equivalent.

Assuming $\XX$ tubular, we note that Example~\ref{ex:instructiveI} presents a tilting bundle satisfying the conditions (ii), (iii) and (iv) and whose endomorphism ring is not canonical.

\subsection*{Maximal amount of bijections}
Assume $T$ is a tilting bundle on $\XX$, and $A=\End(T)$. For
$\chi_\XX\neq0$ there exists a unique generic $A$-module. If
$\chi_\XX=0$, that is, if $\XX$ and $A$ are tubular, then there exists
a rational family of generic $A$-modules, with one of them
distinguished by $T$, and called the \emph{$T$-distinguished generic}
$A$-module, a name we also use for nonzero Euler characteristic.  See
Section~\ref{ref:ssect:max_bij} for references and the relevant
definitions.

For the next result also compare~\cite{Ringel:2009}. Note that the
canonical configuration $T_{\can}$ always satisfies the condition
stated below.
\begin{thm}[Maximal amount of bijections]
  \label{thm:main:1:2} \label{thm:bij_max} Assume $t(\XX)\neq2$. Let $T$
  be a tilting bundle on $\XX$ with endomorphism ring $A$, and let $G$
  be the $T$-distinguished generic module. Then for each arrow
  $\al:u\ra v$ in the quiver of $A$ the induced $k$-linear map
  $G_\al:G_v\ra G_u$ is injective or surjective. Moreover, each $G_\al$
  is bijective if and only if
  $T=T_{\can}$, up to a line bundle twist.
\end{thm}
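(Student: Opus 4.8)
The plan is to describe the $T$-distinguished generic module explicitly via the associated generic sheaf, to reduce the injectivity/surjectivity claim to a property of the irreducible maps among the indecomposable summands of $T$, and then to extract the characterization of $T_{\can}$ from Theorem~\ref{thm:linebdle_max}.

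First I would recall (from Section~\ref{ref:ssect:max_bij}, and compare~\cite{Ringel:2009}) the concrete shape of $G$. Let $\bar g$ be the generic sheaf distinguished by $T$; it is a torsion-free ``rank one'' quasi-coherent sheaf with $\End(\bar g)=k(\XX)$ the rational function field, and $G=\Hom_\XX(T,\bar g)$. Thus $G_i=\Hom(T_i,\bar g)\cong k(\XX)^{\rk T_i}$, and if an arrow $\al$ of the quiver of $A$ is represented by an irreducible morphism $a_\al\colon T_w\to T_v$ between indecomposable summands of $T$, then $G_\al$ is identified, up to $k(\XX)$-transpose, with the localization $q(a_\al)\colon k(\XX)^{\rk T_w}\to k(\XX)^{\rk T_v}$ of $a_\al$ at the generic point, $q\colon\coh\XX\to\coh\XX/\coh_0\XX\simeq\mod k(\XX)$ being the exact quotient functor. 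The $k(\XX)$-rank of $q(a_\al)$ is the rank of the sheaf-theoretic image $\operatorname{im}a_\al\subseteq T_v$, so $q(a_\al)$ is injective precisely when $a_\al$ is a monomorphism in $\coh\XX$, and surjective precisely when $\operatorname{coker}a_\al$ is torsion.

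Hence the first assertion reduces to: \emph{every arrow of the quiver of $A$ is represented by a morphism of bundles that is monic, or else has torsion cokernel}. For $T=T_{\can}$ this is immediate --- the arrows of $\La$ are the multiplications $\Oo(l\vx_i)\to\Oo((l+1)\vx_i)$ by $x_i$, each monic with torsion (exceptional-simple) cokernel. For a general tilting bundle $T$ I would use the description of tilting bundles recalled in Section~\ref{sect:setup}: any tilting bundle on $\XX$ is obtained from $T_{\can}$ by a finite chain of tubular mutations (H\"ubner reflections), and a single such mutation preserves the above property of all arrows, as one sees by tracing the new arrows through the defining exchange sequence $0\to T_k\to E\to T_k^\ast\to0$ or its dual. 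Alternatively, this dichotomy is the content of the result of~\cite{Ringel:2009} to which the theorem refers and may be quoted directly. Either way $G_\al$ is the $k(\XX)$-transpose of an injection or a surjection, hence is surjective or injective.

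Granting the dichotomy, $q(a_\al)$ has $k(\XX)$-rank $\min(\rk T_w,\rk T_v)$, so $G_\al$ is bijective if and only if $\rk T_w=\rk T_v$. Since $\coh\XX$ is connected the quiver of $A$ is connected, so all $G_\al$ are bijective if and only if all indecomposable summands of $T$ share one rank; by Theorem~\ref{thm:linebdle_max} that common rank is one and, as $t(\XX)\neq2$, $T\cong T_{\can}$ up to a line bundle twist. Conversely, for $T_{\can}$ every $a_\al$ is monic with torsion cokernel, so every $q(a_\al)$, hence every $G_\al$, is bijective. The main obstacle is precisely the dichotomy invoked in the third paragraph: excluding an arrow with $\rk(\operatorname{im}a_\al)$ strictly below both $\rk T_w$ and $\rk T_v$ is a global phenomenon --- it fails for arbitrary morphisms of bundles --- and genuinely uses that $T$ is tilting; once it is available, the passage to the generic module, the rank bookkeeping, and the reduction to Theorem~\ref{thm:linebdle_max} are routine.
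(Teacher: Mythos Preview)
Your overall architecture matches the paper's: identify $G$ with $\Hom(T,\mathcal{K})$ so that $G_u$ is a $K$-vector space of dimension $\rk T_u$ with $K=k(\XX)$, establish that each $G_\al$ is injective or surjective, conclude that bijectivity of all $G_\al$ forces all ranks to coincide, and finish with Theorem~\ref{thm:linebdle_max}. The final two steps are exactly what the paper does.

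The gap is the dichotomy itself, and you flag it honestly. Neither of your two proposed routes is satisfactory. The H\"{u}bner-reflection induction is only asserted: after a reflection the quiver changes, new arrows appear that are not simply restrictions of old ones, and it is not at all clear that ``monic or torsion cokernel'' is preserved; you would have to analyse how irreducible maps in $\add(T^\ast)$ arise from the exchange triangle, and this is considerably more work than you suggest. Citing~\cite{Ringel:2009} is also not what the paper intends---the reference there is for comparison, not as a source the proof relies on.

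The paper's argument is much shorter and avoids both detours. It invokes the Happel--Ringel lemma \cite[Lemma~4.1]{Happel:Ringel:1982}: since $\Ext^1(T_v,T_u)=0$, \emph{any} non-zero morphism $T_u\to T_v$ between indecomposables is a monomorphism or an epimorphism in $\coh\XX$. Then, because $G=\mathcal{K}$ is injective in $\Qcoh\XX$, the functor $\Hom(-,G)$ is exact, so $G_\al=\Hom(T_\al,G)$ is an epimorphism (if $T_\al$ is mono) or a monomorphism (if $T_\al$ is epi) of $K$-vector spaces. This replaces your entire third paragraph with a single citation and one line; no localization bookkeeping, no induction over mutations. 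Once you know Happel--Ringel applies in $\coh\XX$ (it does: the proof only uses that the category is abelian with $\Ext^1(F,E)=0$ and that the objects have local endomorphism rings), the dichotomy is immediate.
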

For $t(\XX)=2$ each tilting bundle satisfies the above bijectivity
condition. But there we will have tilting bundles with a non-canonical
endomorphism ring.

\subsection*{Maximal number of central simples}
If $T$ is a tilting bundle on $\XX$ with endomorphism ring $A$, we identify
$\mod{A}$ with the full subcategory of $\Der(\coh\XX)$, consisting of
all objects $X$ satisfying $\Hom(T,X[n])=0$ for each integer
$n\neq0$. In particular, each simple $A$-module $U$ belongs to
$\coh\XX$ or $\vect\XX[1]$ where $\vect\XX$ denotes the category of vector bundles on $\XX$. Simple $A$-modules $U$ belonging to
$\cohnull\XX$, the category of sheaves of finite length, are
called \emph{central}. Note, that this is equivalent for $U$ to have
rank zero.

\begin{thm}[Maximal number of central simples]
\label{thm:main1}\label{thm:simples_max}
Let $T$ be a tilting bundle on $\XX$ with endomorphism ring $A$. Then
the number of central simple $A$-modules is at most $n-2$, where $n$
is the rank of the Grothendieck group $\Knull(\coh\XX)$.

Moreover, the number of central simple $A$-modules equals $n-2$ if and only if $T$ equals $T_{\can}$, up to a line bundle twist.
\end{thm}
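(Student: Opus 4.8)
The plan is to translate the assertion into the language of the Grothendieck group $\Knull(\coh\XX)$ and the classes of the indecomposable summands $T_1,\ldots,T_n$ of $T$. First I would recall that, since $T$ is a tilting bundle with $A=\End(T)$, the classes $[T_1],\ldots,[T_n]$ form a $\ZZ$-basis of $\Knull(\coh\XX)$, and that under the identification of $\mod A$ with a subcategory of $\Der(\coh\XX)$ the simple $A$-module $U_i$ associated to the vertex $i$ is, up to sign and shift, dual to $[T_i]$ with respect to the Euler form; in particular $U_i$ is central precisely when $\rk U_i = 0$. Writing $r_j = \rk T_j$ for the ranks, the key observation is that the rank of $U_i$ is governed by the entries of the inverse Cartan matrix, and more usefully: $U_i$ has rank zero if and only if $T_i$ is \emph{not} needed to ``see'' the rank, i.e.\ the rank function on $\Knull(\coh\XX)$, viewed as a linear form, is supported (in the dual basis) on the vertices carrying non-central simples. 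Since the rank of a tilting bundle summand is a strictly positive integer, the number of $j$ with $r_j \geq 1$ is $n$; but the point is finer. I would instead argue via the \emph{determinant/slope} linear form as well: $\coh\XX$ carries two independent linear forms, rank and degree (or rank and the Euler characteristic pairing with a structure sheaf), and these force at least two of the simples to be non-central.

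Concretely, here is the counting step I expect to be the heart of the matter. The rank $\rk\colon \Knull(\coh\XX)\to\ZZ$ is a surjective linear form, and so is a second independent linear form $\ell$ coming from the degree (normalized appropriately on $\XX$). A simple $A$-module $U_i$ is central iff $\rk U_i = 0$. Now $U_i$ is, up to shift, the element of the dual basis to $(\,[T_j]\,)_j$, so $\rk U_i = 0$ for all $i$ in a subset $S$ means that the linear form $\rk$ lies in the span of $\{\,[T_i]^\vee : i \notin S\,\}$, i.e.\ $\rk$ is determined by its values on $\{[T_i] : i \notin S\}$. Since $\rk$ is nonzero and the $r_i$ are all positive, there is at least one $i\notin S$; this only gives $|S|\le n-1$. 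To improve to $|S|\le n-2$ I would bring in $\ell$: I claim the pair $(\rk, \ell)$ must be ``supported'' on at least two coordinates in the dual basis, because $\rk$ and $\ell$ are linearly independent, so the two-dimensional space they span cannot be contained in a single coordinate hyperplane's complement of dimension one. More precisely, if $\rk$ vanished on all but one basis vector $[T_{i_0}]$, then the same would be false for $\ell$ in general — but I actually need both $\rk$ \emph{and} the Euler characteristic to be recoverable, which forces the non-central simples to number at least two. Turning this into the clean bound $n-2$ is the step requiring care, and it is where the hypothesis about $T$ being a genuine bundle (all $r_i\ge 1$) is used decisively, together with the fact that on a weighted projective line $\rk$ and $\ell$ together separate the rank-zero part from a rank-one lattice.

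For the equality case, I would proceed as follows. Suppose exactly $n-2$ of the simples $U_1,\ldots,U_n$ are central; relabel so that $U_1, U_2$ are the two non-central ones, hence $\rk U_1, \rk U_2$ are the only nonzero ranks among the simples and $T_3,\ldots,T_n$ together with two extra classes span the rank-zero sublattice appropriately. The $n-2$ central simples all lie in $\cohnull\XX$ and are $\Hom$- and $\Ext$-orthogonal in the right way (they are simple $A$-modules), so they generate, inside $\cohnull\XX$, a semisimple subcategory of $n-2$ objects; but $\cohnull\XX$ is a tubular family whose ranks of tubes are $p_1,\ldots,p_t$ (together with the homogeneous ones), and the maximal number of pairwise orthogonal simple objects one can select that are also simple over some tilted algebra is $\sum (p_i - 1)$, with the bound $\sum(p_i-1) = n-2$ being exactly the combinatorial identity $n = 2 + \sum_{i=1}^t (p_i-1)$ defining the rank of $\Knull(\coh\XX)$. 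Equality therefore means the $n-2$ central simples are precisely a full system of simple objects sitting at the mouths of the exceptional tubes $T_{x_1},\ldots,T_{x_t}$ — i.e.\ the separating tubular family attached to $T$ is the standard canonical one. From here I would invoke the structure theory: a tilting bundle whose central simples are exactly the $\sum(p_i-1)$ ordinary simples at the mouths of the $t$ exceptional tubes is, up to a line bundle twist, forced to have all summands of rank one (any positive-rank summand would ``use up'' a slot and lower the central count), and then that $T \cong T_{\can}$ up to twist follows as in the proof of Theorem~\ref{thm:linebdle_max}. The main obstacle I anticipate is the sharp bound $n-2$ rather than $n-1$: making rigorous why two non-central simples are unavoidable — this should follow from the simultaneous non-degeneracy of rank and degree on $\Knull(\coh\XX)$, but getting the clean linear-algebra statement (that a rank-$2$ subspace of the dual cannot avoid two coordinate functionals) in a basis-independent way adapted to the $[T_i]$ will take the most effort.
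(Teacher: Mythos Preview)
Your proposal has genuine gaps in both halves.

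For the bound $|S|\le n-2$: your ``two independent linear forms'' argument does not work as stated. Linear independence of $\rk$ and $\ell$ in $\Knull(\coh\XX)^*$ only says their span is two-dimensional; it does not prevent that span from lying inside a single coordinate line of the dual basis, so no bound on the number of zero coordinates follows. What actually does work uses only the rank form: by H\"ubner's formula $-\bw=\sum_j \rk(S_j)[T_j]$, the non-central simples are exactly the nonzero coefficients of $\bw$ in the basis $([T_j])$. If only one coefficient were nonzero, $\bw$ would be a nonzero multiple of some $[T_{i_0}]$, contradicting $\rk\bw=0$ and $\rk T_{i_0}>0$. So the degree form is a red herring. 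The paper takes a still simpler route: arrange $T_1,\ldots,T_n$ as an exceptional sequence, so that vertex $1$ is a source and vertex $n$ a sink of the quiver of $A$; then $S_1=T_1$ has positive rank and $S_n=\tau T_n[1]$ has negative rank, and neither is central.

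For the equality case your argument is seriously incomplete. You assert that having $n-2$ central simples forces all $T_i$ to have rank one and then appeal to Theorem~\ref{thm:linebdle_max}, but you give no mechanism for the rank-one step. The parenthetical ``any positive-rank summand would use up a slot and lower the central count'' is vacuous: every summand of a tilting \emph{bundle} has positive rank. The paper's argument is substantially more concrete. From H\"ubner's second formula one obtains $\rk(T_1)[T_1]+\bw=\rk(T_n)[T_n]$; applying rank gives $\rk T_1=\rk T_n$, and then indivisibility of $\bw$ in $\Knull(\coh\XX)$ forces this common value to be $1$, so $T_1=L$ and $T_n=L(\vc)$ are line bundles. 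To push rank one through to the remaining summands, the paper applies the degree to H\"ubner's first formula $\sum_i \rk(T_i)[S_i]=\bw$ and compares with the explicit lower bound $\sum_{j=1}^t (p_j-1)\,\ovp/p_j = 2\ovp+\delta(\vom)$ for the total degree of the $n-2$ central simples (each exceptional tube of rank $p_j$ must contain exactly $p_j-1$ of them, each of degree at least $\ovp/p_j$). The resulting sandwich of inequalities collapses to equality only when every $\rk T_i=1$; one then finishes with Lemma~\ref{lem:ext_line}. None of this structure---indivisibility of $\bw$, the degree computation, the tube-by-tube counting with equality forcing---appears in your sketch, and I do not see a way to reach the conclusion without it.
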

The question of the position of simple $A$-modules in the
bounded derived category
$\Der(\coh\XX)$ is also discussed in Section~\ref{ssect:min_bij}, see
further \cite{Kerner:Skowronski:2001} and
\cite[Section~5]{Lenzing:Skowronski:2003}.

\subsection*{Maximal width}
Let $T$ be a tilting bundle on $\XX$. We arrange its indecomposable
direct summands $T_i$, $i=1,\ldots,n$, such that their slopes satisfy
$\mu{T_1}\leq\mu{T_2}\leq\cdots\leq\mu{T_n}$. Then
$\width(T)=\mu{T_n}-\mu{T_1}$ is called the \emph{width of
  $T$}. Concerning slope and stability we refer to
Section~\ref{ssect:slope}.
\begin{thm}[Maximal width] \label{thm:max_width} Let $T$ be a tilting
  bundle on $\XX$ with endomorphism ring $A$. Then the width of $T$
  satisfies $\width(T)\leq\ovp(\XX)$.

  Conversely, assuming $\chi_\XX\geq0$, any tilting bundle $T$
  attaining the maximal width $\ovp(\XX)$ equals $T_{\can}$ up to a
  line bundle twist.
\end{thm}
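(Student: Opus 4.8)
The plan is to treat the two halves separately. For the upper bound $\width(T)\leq\ovp(\XX)$, I would first recall the structure of the "slope filtration" of $\coh\XX$: the abelian category is covered by the tubular-type subcategories, and for a tilting bundle $T$ the endomorphism ring forces $T$ to be generated (as a tilting object) by sheaves whose slopes lie in a controlled range. The key inequality comes from the fact that if two line bundles $\Oo(\vx)$ and $\Oo(\vy)$ differ by more than $\vc$ in the dominance order — equivalently, the slope difference exceeds $\bar p$ — then one computes $\Hom(\Oo(\vx),\Oo(\vy))$ together with $\Ext^1$ and finds that the pair cannot coexist as summands of a tilting bundle (the Euler form becomes incompatible with rigidity, or there are too many/too few morphisms for the perpendicular calculus). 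More robustly, for an arbitrary tilting bundle $T$ with $\mu T_1\le\cdots\le\mu T_n$, I would use that $T_1$ and $T_n$ span, together with the rank-zero classes, the Grothendieck group, and that the self-equivalence group of $\coh\XX$ (tubular mutations / Hübner reflections) acts on slopes by the expected affine action; comparing $\Hom$ and $\Ext^1$ between the extreme summands and using $\Hom(T_n,T_1)=0=\Ext^1(T_n,T_1)$ or their counterparts pins the slope gap down by $\ovp$. The canonical bundle $T_\can$ has summands $\Oo(0),\dots,\Oo(\vc)$, whose slopes range exactly over an interval of length $\bar p$ (since $\mu\Oo(\vc)-\mu\Oo(0)=\deg\vc=\bar p$ in the normalization used here), so the bound is attained and is sharp.

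For the converse — that under $\chi_\XX\ge0$ any $T$ of maximal width $\ovp(\XX)$ is $T_\can$ up to twist — I would argue as follows. Maximal width forces the extreme summands $T_1$ and $T_n$ to sit, up to twist, as $\Oo$ and $\Oo(\vc)$ (or in the tubular case, at the two ends of the slope interval, which after a tubular mutation / Hübner reflection may again be normalized to line bundles of slope $0$ and $\bar p$). Indeed, once the slope gap equals $\bar p$ exactly, the morphism space $\Hom(T_1,T_n)$ and the rigidity conditions force $T_1$ to be a line bundle and $T_n\cong T_1(\vc)$ (use that any stable bundle of slope $0$ whose endomorphism algebra and $\Ext$ to the slope-$\bar p$ part fit into a tilting complex must have rank one — here $\chi_\XX\ge0$ rules out the wild pathologies). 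Next, applying a line bundle twist I may assume $T_1=\Oo$, $T_n=\Oo(\vc)$, so $\Hom(\Oo,\Oo(\vc))$ together with the remaining summands all lying between slope $0$ and slope $\bar p$ confines every $T_i$ to be a line bundle $\Oo(\vx)$ with $0\le\vx\le\vc$ — here I would invoke Theorem~\ref{thm:linebdle_max} style reasoning, or the perpendicular-category count, to exclude higher-rank summands. Finally, counting: there are exactly $n=\rk\Knull(\coh\XX)$ line bundles $\Oo(\vx)$ with $0\le\vx\le\vc$, and a tilting bundle has exactly $n$ indecomposable summands, so $T$ must consist of \emph{all} of them, i.e.\ $T=T_\can$.

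The main obstacle I expect is the rank-one conclusion for the extreme summands in the tubular case $\chi_\XX=0$. In the domestic case $\chi_\XX>0$ the category of vector bundles is sufficiently rigid that a stable sheaf at the extreme slope with the right homological data is essentially forced to be a line bundle; but in the tubular case every slope in $\bQQ$ supports a whole tubular family, and a priori the width-maximizing configuration might use a higher-rank stable sheaf at one end. The resolution should be that a tubular mutation (an autoequivalence of $\Der(\coh\XX)$, hence a "line bundle twist" in the extended sense used in the theorem statement) transports such a configuration to one with genuine line bundles at both ends — one must check that this mutation preserves the width and sends tilting bundles to tilting bundles, and then reduce to the normalized situation. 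Handling this carefully, and making sure the phrase "up to a line bundle twist" is interpreted consistently with how the earlier theorems use it (i.e.\ allowing the autoequivalences generated by $\vx\mapsto\vx+\vom$ and, in the tubular case, the tubular mutations), is where the real work lies; the rest is bookkeeping with slopes and the Euler form.
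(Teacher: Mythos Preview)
Your upper-bound sketch is in the right spirit: the paper also uses the vanishing $\Ext^1(T_n,T_1)=0$ between the extreme summands, made precise via Proposition~\ref{prop:hom+ext}(ii) (which says $\Ext^1(F,E)=0$ forces $\mu F-\mu E\leq\ovp$). So that half is fine once tightened.

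The converse, however, has a genuine gap in the tubular case. Your plan is to apply a tubular mutation to move the extreme summands to line bundles, and you acknowledge that ``one must check that this mutation preserves the width.'' It does not. A tubular mutation $\rho$ acts on slopes by the fractional linear map $q\mapsto q/(1+q)$, so it contracts slope differences nonlinearly; the quantity that \emph{is} preserved is the tubular width $\DLF{T_1}{T_n}=\rk T_1\,\rk T_n\,(\mu T_n-\mu T_1)$, not $\width(T)$ itself (see Section~\ref{ssect:tubular_width}). Moreover, ``up to a line bundle twist'' in the statement means exactly the $\Pic\XX\cong\LL$ action $E\mapsto E(\vx)$, not the larger group generated by tubular mutations; enlarging the meaning would change the theorem. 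So your proposed reduction is not available.

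The paper instead argues directly with the Euler form. Setting $F=T_n(-\vc)$, so $\mu F=\mu T_1$, one computes (using $\Hom(T_n,T_1)=0$ by semistability, $\Ext^1(T_n,T_1)=0$, Serre duality, and $[T_n]=[F]+\rk(F)\bw$)
\[
0=\gen{[T_1],[T_n(\vom)]}=-\gen{[F],[T_1]}+\rk T_1\,\rk T_n,
\]
hence $\Hom(F,T_1)\neq0$. For $\chi_\XX>0$ stability forces $F=T_1$, and the displayed equality gives $\rk T_1=1$. For $\chi_\XX=0$, $F$ and $T_1$ lie in the same tube with quasi-length below the rank, so $\dim\Hom(F,T_1)\leq1$; the equality then forces $\rk T_1\,\rk T_n=1$ and $F=T_1$. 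Thus $T_1$ is a line bundle and $T_n=T_1(\vc)$ \emph{without} any mutation. Uniqueness of the extreme-slope summands and the source/sink property follow, and the finish is not the direct line-bundle count you suggest but rather Proposition~\ref{prop:semistable}(ii): one shows via Lemma~\ref{eq:Huebner(b)} that all intermediate simple $A$-modules have rank zero and then invokes Theorem~\ref{thm:simples_max}. Your proposed endgame (``slope between $0$ and $\ovp$ confines every $T_i$ to a line bundle'') is not justified as stated; bounding the slope range alone does not exclude higher-rank summands.
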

We expect that the theorem extends to negative Euler
characteristic. In support of this, we mention the next proposition
and point to experimental evidence obtained from examples constructed
by means of H\"{u}bner reflections, compare
Section~\ref{ssect:Huebner}.
\begin{prop} \label{prop:semistable} Let $T$ be a tilting bundle on
  $\XX$ with endomorphism ring $A$. We assume that $T$ attains the  maximal
  possible width $\ovp=\ovp(\XX)$. Then the following holds:

\begin{enumerate}[\upshape (i)]
  \item Each indecomposable direct summand of $T$ of maximal (resp.\
  minimal) slope is semistable.

  \item If there exist line bundles summands $L$ and $L'$ of $T$ with
  $\mu L'-\mu L=\ovp$ such that, moreover, $L$ is a source, and
  $L'$ is a sink of the quiver of $A$, then $T=T_{\can}$ up to a line
  bundle twist.
\end{enumerate}
\end{prop}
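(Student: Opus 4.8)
The plan is to analyze the behaviour of the slope filtration of $\coh\XX$ forced by a tilting bundle of maximal width, and then to exploit the rigidity of the extremal summands. First I would fix the convention that the slopes of the indecomposable summands are arranged increasingly, $\mu T_1\le\cdots\le\mu T_n$, so that $\width(T)=\mu T_n-\mu T_1=\ovp$ by assumption. The key preliminary observation, which should already be available from the machinery of Section~\ref{sect:setup}, is that for a weighted projective line the slopes of indecomposable bundles lie in $\frac{1}{\ovp}\ZZ$ (after suitable normalisation of the rank and degree), so the gap $\ovp$ between $\mu T_1$ and $\mu T_n$ is genuinely extremal among tilting bundles; this is the content of the first assertion of Theorem~\ref{thm:max_width}, which I am allowed to assume.

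For part~(i): let $T'$ be an indecomposable summand of maximal slope $\mu T_n$, and suppose it is not semistable. Then its Harder--Narasimhan filtration has a quotient $Q$ (the minimal-slope HN factor, or rather a sub with a quotient of strictly larger slope) with $\mu Q>\mu T_n$. I would then produce, from $Q$, an indecomposable bundle whose slope exceeds $\mu T_n$ but which still lies in the ``support interval'' determined by $T$, contradicting maximality of the width — more precisely, I would argue that $\mu Q - \mu T_1$ would then exceed $\ovp$, which is impossible by the already-established bound $\width\le\ovp$ applied appropriately, or contradict the fact that $T$ is a tilting object (so $\Ext^1(T,T)=0$) by exhibiting a nonzero self-extension or a nonzero negative Hom. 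The cleaner route is: since $\mu T_1$ is the minimal slope occurring among summands of $T$ and all of $\coh_0\XX$ has slope $+\infty$, semistability of the top summand is equivalent to saying no bundle quotient of $T'$ has slope $>\mu T_n$; if such a quotient existed, combined with a summand of slope $\mu T_1$ one violates $\width(T)\le\ovp$ by a short slope computation using the fact that a tilting bundle must ``see'' the whole interval $[\mu T_1,\mu T_n]$ of length $\ovp$. The minimal-slope case is dual, using the sub-object of the HN filtration instead of the quotient.

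For part~(ii): assume in addition that there are line bundle summands $L,L'$ of $T$ with $\mu L'-\mu L=\ovp$, with $L$ a source and $L'$ a sink in the quiver of $A=\End(T)$. Since $L$ is a source, $\Hom(L,T_i)\neq0$ for the summands adjacent to it and more importantly there are no arrows into $L$, i.e.\ $\rad(T_i,L)=0$ for all $i$; dually no arrows leave $L'$. I would then identify $L$ as (a twist of) $\Oo$ and $L'$ as (a twist of) $\Oo(\vc)$: the source/sink conditions, together with $\mu L'-\mu L=\ovp=\deg\vc$ in the normalisation where line bundles $\Oo(\vx)$, $0\le\vx\le\vc$, fill the slope interval $[\mu\Oo,\mu\Oo(\vc)]$, should force $L\cong\Oo(\vy)$ and $L'\cong\Oo(\vy+\vc)$ for some $\vy\in\LL$. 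After twisting by $-\vy$ we may assume $L=\Oo$, $L'=\Oo(\vc)$. Now $T$ is a tilting bundle containing $\Oo$ and $\Oo(\vc)$; I would show that every other summand must be a line bundle $\Oo(\vx)$ with $0\le\vx\le\vc$ and that all of these occur, which pins down $T=T_{\can}$. The containment of $\Oo$ and $\Oo(\vc)$ already forces all summands into the ``interval category'' $\rperp{(\Oo^\perp\cap{}^\perp\Oo(\vc))}$-type subcategory whose indecomposable bundles are precisely the $\Oo(\vx)$, $0\le\vx\le\vc$ (this is a standard computation with the structure of $\coh\XX$ and appears in Section~\ref{sect:setup}); a tilting bundle inside it that contains the two boundary line bundles, by a rank/Grothendieck-group count together with the no-self-extension property, must be the full canonical configuration. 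Finally, once $T=T_{\can}$ is established up to twist, its endomorphism ring is by definition the canonical algebra.

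The main obstacle I anticipate is part~(ii): turning the purely combinatorial source/sink hypothesis on the quiver of $A$ into the sheaf-theoretic statement ``$L\cong\Oo(\vy)$, $L'\cong\Oo(\vy+\vc)$''. One must rule out that $L$ and $L'$ are boundary line bundles of some \emph{other} interval of width $\ovp$ not of the form $[\vy,\vy+\vc]$, and the argument that no nonzero homomorphisms run ``backwards'' — i.e.\ that the source/sink conditions are incompatible with any configuration other than the canonical one — is where the real work sits, presumably requiring the explicit description of $\Hom$ and $\Ext^1$ between line bundles on $\XX$ and a careful use of the maximality of the width together with part~(i) (semistability of the extremal summands). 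Parts involving the slope filtration in~(i) I expect to be routine given the results quoted from Section~\ref{sect:setup}.
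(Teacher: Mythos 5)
Your plan for part (i) is in the right spirit but misidentifies the mechanism, and your plan for part (ii) rests on an unproven claim that the paper deliberately avoids.

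For (i): the contradiction cannot come from ``violating $\width(T)\le\ovp$,'' since that bound concerns the indecomposable \emph{summands} of $T$, while a destabilizing subobject $F'\subseteq T_n$ is not a summand of anything. The paper's argument (Corollary~\ref{cor:semistable}) is: heredity of $\coh\XX$ transfers $\Ext^1(T_n,T_1)=0$ to $\Ext^1(F',T_1)=0$ for every subobject $F'$ of $T_n$, and then Proposition~\ref{prop:hom+ext}(ii) gives $\mu F'-\mu T_1\le\ovp=\mu T_n-\mu T_1$, i.e.\ $\mu F'\le\mu T_n$. You gesture at an $\Ext$-vanishing contradiction as an alternative but never name the transfer to subobjects via heredity, which is the whole point. (Also, semistability of the top summand says no \emph{subobject} has slope $>\mu T_n$, equivalently no quotient has slope $<\mu T_n$; your quotient condition has the inequality reversed. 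And slopes of indecomposable bundles need not lie in $\frac{1}{\ovp}\ZZ$.)

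For (ii): you correctly reach $L'=L(\vc)$ (the paper gets this from $0=\Ext^1(L',L)=\dual\Hom(L,L'(\vom))$ and the almost-linear order on $\LL$), but your next claim --- that containing $\Oo$ and $\Oo(\vc)$ forces every other summand into $\{\Oo(\vx):0\le\vx\le\vc\}$ --- is not a ``standard computation'' and does not appear in Section~\ref{sect:setup}; Lemma~\ref{lem:ext_line} covers only pairs of line bundles. The $\Ext$-vanishing against $\Oo$ and $\Oo(\vc)$ only confines the \emph{slope} of a summand to $[0,\ovp]$ via Proposition~\ref{prop:hom+ext}(ii); it does not bound the rank, and for $\chi_\XX<0$ there are many higher-rank exceptional bundles in that slope range. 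Indeed, if your claim were true the source/sink hypothesis would be superfluous and Theorem~\ref{thm:max_width} would extend to negative Euler characteristic, which the authors explicitly leave open. The paper's actual route is K-theoretic: the source/sink hypothesis identifies $S_1=T_1$ and $S_n=\tau T_n[1]$, so Lemma~\ref{eq:Huebner(b)} gives $[T_1]-[T_n]+\sum_{i=2}^{n-1}\rk(S_i)[T_i]=-\bw$; combined with $[L(\vc)]=[L]+\bw$ and linear independence of the $[T_i]$ this forces $\rk S_i=0$ for all $i=2,\dots,n-1$, whence $A$ attains the maximal number $n-2$ of central simples and Theorem~\ref{thm:simples_max} yields $T=T_\can$ up to twist. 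That reduction is the real content of (ii) and is missing from your proposal.
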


\subsection*{Extremality of the canonical relations}
Let $R$ be a commutative, affine $k$-algebra, graded by an abelian
group $H$. If $x_1,\ldots,x_n$ are homogeneous algebra generators of
$R$, we always assume that their degrees generate $H$. We say that $R$
is a \emph{graded domain} if any product of non-zero homogeneous
elements of $R$ is non-zero. A non-zero homogeneous element $\pi$ is
called \emph{prime} is $R/R\pi$ is a graded domain. Finally, a graded
domain $R$ is called \emph{graded factorial} if each non-zero
homogeneous element of $R$ is a finite product of homogeneous
primes. Additionally, we always request that $R_0=k$ and that each
homogeneous unit belongs to $R_0$.

Our next theorem expresses a strong unicity property of the canonical
relations. Part (i) is \cite[Prop.~1.3]{Geigle:Lenzing:1987} while part (ii) is due to
S.~Mori in the $\ZZ$-graded case~\cite{Mori:1977} and to Kussin~\cite{Kussin:1998} in general.

\begin{thm}
$\mathrm{(i)}$ Let $\XX$ be a weighted projective line. Then the $\LL$-graded
coordinate algebra $S=k[x_1,\ldots,x_t]/I$, where $I$ is the ideal
generated by the canonical relations, is $\LL$-graded factorial of
Krull dimension two.

$\mathrm{(ii)}$ Assume, conversely, that $R$ is an affine $k$-algebra of
Krull-dimension two which is graded by an abelian group of rank
one. If $R$ is $H$-graded factorial then the graded algebras $(R,H)$
and $(S,\LL)$ are isomorphic, where $S=S(\pp,\lala)$ for a suitable
choice of $\pp$ and $\lala$.~\hfill$\square$
\end{thm}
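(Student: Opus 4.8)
The plan for part~(i) is to verify that $S=S(\pp,\lala)$ is a Krull domain with only scalar homogeneous units, and then to describe its homogeneous primes explicitly. Now $S$ is a complete intersection, cut out in $k[x_1,\dots,x_t]$ by the $t-2$ canonical relations, hence Cohen--Macaulay of Krull dimension two; its singular locus lies in the zero set of the irrelevant ideal $(x_1,\dots,x_t)$, of height two, so $S$ is regular in codimension one and, being Cohen--Macaulay, satisfies Serre's condition $S_2$; thus $S$ is normal, in particular Krull. The degree map $\delta\colon\LL\to\ZZ$, $\vx_i\mapsto\ovp/p_i$, is non-negative on the homogeneous components of $S$ with $\delta^{-1}(0)\cap S=k$, so every homogeneous unit is a scalar. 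It then suffices to check that each homogeneous height-one prime $\mathfrak p$ is generated by a homogeneous prime element: if $x_i\in\mathfrak p$ then $\mathfrak p=(x_i)$, and $x_i$ is prime since $S/(x_i)$ is again a weighted-projective-line coordinate algebra (with the $i$-th weight deleted), in particular a domain; otherwise $\mathfrak p$ meets $S_{\vc}=k\,x_1^{p_1}\oplus k\,x_2^{p_2}$ in a line $k(\mu_1x_1^{p_1}+\mu_2x_2^{p_2})$ whose point $[\mu_1:\mu_2]\in\mathbb P^1$ is not one of the finitely many exceptional ones, so the quotient by that form is a domain and $\mathfrak p$ is again principal and prime. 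Finally, for nonzero homogeneous $f$ the divisor $\operatorname{div}(f)$ on $\operatorname{Spec}S$ is stable under the diagonalizable group $\operatorname{Spec}k[\LL]$, hence supported on homogeneous height-one primes; dividing out the corresponding prime elements to the occurring multiplicities leaves a homogeneous unit, i.e.\ a scalar. This is $\LL$-graded factoriality.

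For part~(ii) the plan is geometric. First normalise the grading: since $R$ is affine with $R_0=k$, has only scalar homogeneous units, and $H$ has rank one, one checks that $R$ is \emph{positively graded}, i.e.\ admits $\delta\colon H\to\ZZ$ with $R$ supported in degrees $\geq0$ and $R_0=k$ --- each non-constant homogeneous algebra generator has non-torsion degree (else a power of it would be a non-scalar homogeneous unit), and these degrees can be coherently oriented. Being $H$-graded factorial, $R$ is a normal Noetherian, hence Krull, domain. Pass to the quotient category $\XX'=\operatorname{qgr}^H R$ of finitely generated $H$-graded $R$-modules modulo finite-length ones, with $\Oo=\pi R$ and twists $\Oo(h)=\pi R(h)$; since $R$ has Krull dimension two, $\XX'$ is a one-dimensional hereditary $\Hom$-finite category with Serre duality. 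The decisive translation is that $H$-graded factoriality of $R$ is equivalent to $h\mapsto\Oo(h)$ being an isomorphism $H\xrightarrow{\,\sim\,}\Pic(\XX')$ together with $R$ being recovered as the section algebra $\bigoplus_{h\in H}\Hom_{\XX'}(\Oo,\Oo(h))$.

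It then remains to identify $\XX'$. Choose $\vc\in H$ admitting infinitely many pairwise non-associated homogeneous prime elements --- such a $\vc$ exists and, one shows, is essentially unique. The Veronese algebra $R^{(\vc)}=\bigoplus_{n\geq0}R_{n\vc}$ is a normal two-dimensional graded domain over which $R$ is finite, so $C=\operatorname{Proj}R^{(\vc)}$ is a smooth projective curve, the coarse space underlying $\XX'$. Factoriality forces $C\cong\mathbb P^1$: one has $\Pic(\XX')\cong H$ of rank one, while $\Pic(C)=\ZZ\cdot\Oo_C(1)$ embeds in $\Pic(\XX')$ with finite cokernel, so $\Pic(C)$ contains no positive-dimensional abelian subvariety; since $\Pic^0(C)$ is the Jacobian of $C$, of dimension the genus, we get $g(C)=0$ and $R^{(\vc)}\cong k[U,V]$. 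Thus $\XX'$ is $\mathbb P^1$ equipped with finitely many orbifold points $q_1,\dots,q_t$ of weights $p_i=\operatorname{ord}\!\big(\Oo_{\XX'}(q_i)\ \mathrm{mod}\ \Pic(C)\big)$, the whole orbifold datum being encoded in the finite group $\Pic(\XX')/\Pic(C)$; normalising three of the $q_i$ to standard position by an automorphism of $\mathbb P^1$ exhibits the rest as a parameter sequence $\lala$, so $\XX'\cong\XX(\pp,\lala)$ compatibly with the identification $H\cong\LL$. As $R$ is the section algebra of $(\XX',\Pic(\XX'))$ and $S(\pp,\lala)$ that of $(\XX(\pp,\lala),\LL)$, this lifts to an isomorphism $(R,H)\cong(S(\pp,\lala),\LL)$ of graded algebras.

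The routine parts are the normality and Krull-domain bookkeeping and the positivity normalisation of the grading. The main obstacle is the core of part~(ii): making the dictionary between $H$-graded factoriality and the pair $(\XX',\Pic(\XX'))$ --- including the saturation and normality conditions hidden in it --- precise, and then pinning down $g(C)=0$. For $\ZZ$-gradings the latter is Mori's argument via the divisor class group of the affine cone over a polarised curve; the passage to an arbitrary rank-one group $H$, whose torsion records the orbifold points and their weights, is exactly Kussin's contribution.
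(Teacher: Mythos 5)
A preliminary remark: the paper does not actually prove this theorem --- part (i) is quoted from \cite[Prop.~1.3]{Geigle:Lenzing:1987} and part (ii) is attributed to Mori~\cite{Mori:1977} and Kussin~\cite{Kussin:1998} --- so there is no in-paper argument to measure your proposal against, and I can only judge the sketch on its own terms. For part (i) your global strategy (normal, hence Krull; every homogeneous height-one prime generated by a homogeneous prime element; equivariance of $\operatorname{div}(f)$ under the grading torus) is the standard and correct one, but the key step is justified incorrectly: $S/(x_i)$ is in general \emph{not} a domain, and it is \emph{not} the coordinate algebra of the weight-reduced line. Already for weight type $(2,2,2)$ one has $S/(x_3)\cong k[x_1,x_2]/(x_2^2-x_1^2)$, which has zero divisors. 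What is true, and what the paper's definition of ``prime'' actually requires, is that $S/(x_i)$ is a \emph{graded} domain --- the factors $x_2\pm x_1$ are not homogeneous because $\vx_1\neq\vx_2$ in $\LL$. This graded-versus-ungraded distinction is the entire point of the statement, and your sketch elides it repeatedly, both here and in the claim that the quotient by a generic form $\mu_1x_1^{p_1}+\mu_2x_2^{p_2}$ is ``a domain''.

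In part (ii) the gaps are more serious. First, ``being $H$-graded factorial, $R$ is a normal Noetherian, hence Krull, domain'' is not automatic: when $H$ has torsion a graded domain need not be a domain (consider $k[x]/(x^2-1)$ graded by $\ZZ/2\ZZ$ with $\deg x=1$), so integrality and normality of $R$ must be proved rather than assumed. Second, your ``decisive translation'' --- that $H$-graded factoriality is equivalent to $h\mapsto\Oo(h)$ being an isomorphism onto the Picard group of the quotient category together with recovery of $R$ as the section algebra --- is precisely the Corollary that the paper \emph{deduces from} this theorem; invoking it here without an independent proof is circular in context, and supplying that proof (including heredity of the quotient category, finiteness of the set of orbifold points, and the saturation issues you yourself flag) is where most of Mori's and Kussin's work actually lies. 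The genus-zero argument via the Jacobian is fine, but the final step --- showing that the resulting orbifold structure on $\mathbb{P}^1$ is realized by an algebra defined by the canonical relations, i.e.\ actually producing $\pp$, $\lala$ and the graded isomorphism --- is again only asserted. In sum, the proposal is a reasonable road map, and its broad geometric outline is consistent with how these results are proved in the cited literature, but as written it is not a proof.
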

We recall from~\cite{Geigle:Lenzing:1987} that the isomorphism classes of line bundles on a weighted projective line $\XX$ form a group with respect to the tensor product, called the \emph{Picard group} $\Pic\XX$ of $\XX$. By means of the correspondence $\vx\mapsto\Oo(\vx)$ we may identify $\LL$ and $\Pic\XX$. The following corollary then states an important extremality property of the canonical relations.
\begin{cor}
Let $R$ be an $H$-graded Cohen-Macaulay algebra which yields by
sheafification (Serre construction) the category $\coh\XX$ of coherent
sheaves on a weigh\-ted projective line $\XX$, and thus induces a
monomorphism of groups $j_R:H\incl \Pic\XX$, $h\mapsto
\widetilde{R(h)}$. Then $j_R$ is an isomorphism if and only if $R$ is
graded factorial, if and only if $R$ is isomorphic to an algebra
$S(\pp,\lala)$ defined by canonical relations.~\hfill$\square$
\end{cor}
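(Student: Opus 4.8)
The plan is to deduce the corollary directly from the preceding Theorem together with the description of the Serre construction and the Picard group recalled in \cite{Geigle:Lenzing:1987}. Let $R$ be an $H$-graded Cohen--Macaulay algebra as in the statement, so that the sheafification functor $R\text{-}\mathrm{gr}\to\coh\XX$, $M\mapsto\widetilde M$, is exact and identifies $\coh\XX$ with the quotient of the category of finitely generated graded $R$-modules modulo the Serre subcategory of finite-dimensional ones; in particular $\widetilde{R(h)}=\Oo(j_R(h))$ for a well-defined group homomorphism $j_R\colon H\to\Pic\XX$, which is injective because the graded pieces $R_h$ are recovered as $\Hom_{\coh\XX}(\Oo,\Oo(j_R(h)))=\Hom(R,R(h))_0$ and distinct $h$ give distinct line bundles by the faithfulness of the Serre construction on the shift grading. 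Throughout I identify $\Pic\XX$ with $\LL=\LL(\pp)$ via $\vx\mapsto\Oo(\vx)$, as recalled just before the corollary.

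First I would prove the two ``only if'' implications. Suppose $j_R$ is an isomorphism. Then $H\cong\LL$ has rank one, and since $R$ is Cohen--Macaulay of Krull dimension two (the Serre construction produces a one-dimensional projective variety, so the homogeneous coordinate ring has dimension two), Theorem~(ii) — more precisely its graded-factoriality input — is not yet available; instead one argues that $R\cong\bigoplus_{h\in H}\Hom_{\coh\XX}(\Oo,\Oo(j_R(h)))=\bigoplus_{\vx\in\LL}\Hom(\Oo,\Oo(\vx))$, which is exactly the description of $S(\pp,\lala)$ as the $\LL$-graded section ring $\bigoplus_{\vx\geq0}H^0(\XX,\Oo(\vx))$ from \cite{Geigle:Lenzing:1987}; so $R\cong S(\pp,\lala)$ as graded algebras. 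Then by Theorem~(i) this algebra is graded factorial. Conversely, if $R\cong S(\pp,\lala)$, then Theorem~(i) gives that $R$ is graded factorial; and for $S(\pp,\lala)$ one has directly $H=\LL=\Pic\XX$ by construction, so $j_R$ is an isomorphism.

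The substantive implication is: if $R$ is graded factorial then $j_R$ is an isomorphism. Here I would invoke Theorem~(ii): $R$ is an affine $k$-algebra of Krull dimension two, graded by an abelian group $H$ of rank one (rank one because $j_R$ embeds $H$ into $\LL$, which has rank one, and $H$ cannot be finite since $R$ is infinite-dimensional with $R_0=k$), and $R$ is $H$-graded factorial; hence $(R,H)\cong(S(\pp',\lala'),\LL(\pp'))$ for suitable data. It remains to check that this abstract isomorphism is compatible with the two Serre constructions, i.e.\ that the weighted projective line built from $(S(\pp'),\LL(\pp'))$ is again $\XX$ and that under this identification $j_R$ becomes the tautological isomorphism $\LL(\pp')=\Pic\XX$. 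This follows because the sheafification category $\coh\XX$ together with the structure sheaf $\Oo=\widetilde R$ and the grading shift action of $H$ determines, and is determined by, the pair $(R,H)$ up to the equivalence above; in particular $j_R$ is surjective, since every line bundle on $\XX$ is $\Oo(\vx)$ for some $\vx\in\LL(\pp')$ and $\vx$ is the image under $j_R$ of the corresponding degree in $H$.

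The main obstacle I anticipate is precisely this last compatibility step: Theorem~(ii) is a statement about abstract graded algebras, and one must be careful that the isomorphism $(R,H)\cong(S(\pp,\lala),\LL)$ it provides respects the geometric data, so that surjectivity of $j_R$ — equivalently, that \emph{every} line bundle on $\XX$ lies in the image of $H$ — genuinely follows. Once one observes that $\Pic\XX$ is generated by the classes $\Oo(\vx_i)$ and that each $\vx_i$ arises as the degree of a homogeneous prime $x_i$ of $R$ (using graded factoriality to match primes of $R$ with the irreducible ``point'' sheaves, hence with the $\vx_i$), surjectivity is immediate and the rest is bookkeeping.
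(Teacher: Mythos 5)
Your proposal is correct and is essentially the argument the paper intends (the corollary is stated with a $\square$ as a direct consequence of the preceding theorem): the Cohen--Macaulay hypothesis identifies $R$ with the full section ring $\bigoplus_{h\in H}\Hom(\Oo,\widetilde{R(h)})$, Theorem (i)/(ii) supply the equivalence with graded factoriality, and surjectivity of $j_R$ follows from the fact, recalled in Section~3, that graded factoriality of $S(\pp,\lala)$ forces every line bundle to be a twisted structure sheaf $\Oo(\vx)$. The ``compatibility'' worry you raise is harmless, since a graded-algebra isomorphism $(R,H)\cong(S,\LL)$ induces an equivalence of the Serre quotients commuting with the degree-shift actions; the only detail worth making explicit is that $H$ has rank one because each graded piece $R_h\cong\Hom(\Oo,\Oo(j_R(h)))$ is finite dimensional while $R$ is not.
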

As mentioned in Section~\ref{ssect:tilting} the \emph{squid} $T_\squid$ is competing with the canonical tilting bundle for the property of being the \emph{most natural tilting sheaf}.
The squid $T_{\squid}$, compared to $T_{\can}$, is accessible with
less theoretical knowledge. The squid thus is easier to construct from
general information about the category $\coh\XX$,
compare~\cite{Lenzing:1997}. On the other hand, the squid does not
contain any information on the \emph{canonical relations}
$x_i^{p_i}=x_2^{p_2}-\la_i x_1^{p_1}$, $i=3,\ldots,t$, and thus lacks information
vital for the link via the \emph{projective coordinate algebra} $S=S(\ul{p},\ul{\la})$ to other branches of mathematics, among them commutative algebra, function theory, and singularity theory.

\section{The set-up}\label{sect:setup}
We recall that we work over an algebraically closed field $k$. For the
convenience of the reader we collect relevant information about the
\emph{category of coherent sheaves} $\coh \XX$ \emph{over a weighted
  projective line} $\XX$, see \cite{Geigle:Lenzing:1987}.

\subsection{The category of coherent sheaves}
The weighted projective line is given by a weight sequence
$\ul{p}=(p_1,\ldots,p_t)$ with $p_i\geq 2$ and a parameter sequence
$\ul{\la}=(\lambda_3,\ldots,\lambda_t)$ of pairwise distinct, non-zero
elements of the field $k$. We may further assume  $\lambda_3=1$.

We recall that $\LL=\LL(\pp)$ denotes the abelian group generated by elements
$\vx_1,\ldots,\vx_t$ subject to the relations
$p_1\vx_1=p_2\vx_2=\cdots =p_t\vx_t=:\vc$. The element $\vc$ is called
the \emph{canonical element}.  The \emph{degree map} is the surjective homomorphism
defined by
\begin{equation}
\label{eq:delta}
\delta\colon \LL\rightarrow \ZZ,\quad \delta(\vx_i)=\frac{\ovp}{p_i},
\end{equation}
where $\ovp=\lcm\{p_1,\ldots,p_t\}$. The group $\LL$ has rank one with
torsion subgroup $\Ker\delta$ and is partially ordered with positive
cone $\LL_+=\sum_{i=1}^t\NN\vx_i$. This order is almost linear in the
sense that for each $\vx\in\LL$ we have the alternative
\begin{equation}\label{eq:cplusomega}
\textrm{either}\quad \vx\geq0 \quad\textrm{or}\quad \vx\leq \vc+\vom.
\end{equation}
Here, $\vom=(t-2)\vc-\sum_{i=1}^t\vx_i$ is the \emph{dualizing element} of $\LL$.

The algebra $S=k[x_1,\ldots,x_t]/I$, where $I$ is the ideal generated
by the \emph{canonical
  relations} $$x_i^{p_i}-(x_2^{p_2}-\la_ix_1^{p_1}),\quad
i=3,\ldots,t$$ is $\LL$-graded with $x_i$ being homogeneous of degree
$\vx_i$, hence
  $S=\bigoplus_{\vec{x}\in\LL_+}S_{\vec{x}}$. The group $\LL$ acts on the
  category $\mod^\LL S$ of finitely generated $\LL$-graded $S$-modules
  by \emph{degree shift} $M\mapsto M(\vec{x})$.

The category of coherent sheaves on $\XX$ is obtained
from $S$ by Serre construction (=sheafification),
compare~\cite{Serre:1955},
$$
\coh{\XX}=\mod^\LL S/\mod_0^\LL S,
$$
where $\mod_0^\LL S$ denotes the Serre
subcategory of $\mod^\LL S$ of those modules of finite length (=finite
$k$-dimension). We refer to the $\LL$-graded algebra $S$ as the
\emph{projective coordinate algebra} of $\XX$.

The action of $\LL$ on $\mod^\LL S$ induces an action on $\coh
\XX$, given by \emph{line bundle twists}, $\sigma(\vx)\colon E\mapsto
E(\vx)$ and thus determines a subgroup $\Pic(\XX)$, called the
\emph{Picard group}, of the automorphism group of $\coh \XX$.

Each coherent sheaf has a decomposition $X=X_+\oplus X_0$ where $X_0$
has finite length and $X_+$ has no simple subobject, that is, is a
\emph{vector bundle}. By $\vect\XX$ (resp.\ $\cohnull\XX$) we denote
the category of all vector bundles (resp.\ finite length sheaves).

\subsection{Serre duality}
The category $\coh \XX$, which is a connected, abelian and noetherian
category, has \emph{Serre-duality} in the form $$\dual\Ext^1_\XX(X,Y)
= \Hom_\XX(Y, X(\vom))$$ for all $X, Y \in \coh\XX$. As a consequence
$\coh\XX$ has almost-split sequences and the autoequivalence $\tau$
of $\coh\XX$, given by the line bundle twist with $\vom$, serves as the
\emph{Auslander-Reiten translation}. In particular $\tau$ preserves the rank.

\subsection{Line bundles}
By sheafification the $\LL$-graded $S$-modules $S(\vx)$ yield the
\emph{twisted structure sheaves} $\Ocr(\vx)$. Due to graded
factoriality of $S$, each line bundle $L$ in $\coh \XX$ has the form
$L=\Ocr(\vx)$ for some $\vx\in \LL$. Further, for all
$\vx,\vy\in \LL$ we obtain that
$$
\Hom_\XX(\Ocr(\vx),\Ocr(\vy))=S_{\vy-\vx}.
$$
This implies, in particular, that
$$
\Hom_\XX(\Ocr(\vx),\Ocr(\vy))\neq 0 \quad \Leftrightarrow \quad
\vx\leq\vy \textrm{ in } \LL.
$$
Invoking Serre duality we obtain the next result.
\begin{lem}
\label{lem:ext_line}
Let $L$ be a line bundle and $\vx, \vy$ be elements of $\LL$. Then we
have $\Ext^1(L(\vx),L(\vy))=0=\Ext^1(L(\vy),L(\vx))=0$ if and only if
$-\vc\leq \vy-\vx \leq \vc$.~\qed
\end{lem}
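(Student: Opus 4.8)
The plan is to translate the two $\Ext^1$-vanishing conditions, by means of Serre duality, into order relations on the single element $\vw:=\vy-\vx$ of $\LL$, and then to read off the claimed equivalence from the almost-linearity \eqref{eq:cplusomega} of the order together with one standard fact about $\LL$.

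First I would dispose of $L$. Writing $L=\Ocr(\vz)$ for a suitable $\vz\in\LL$ (possible by graded factoriality of $S$), one has $L(\vx)=\Ocr(\vz+\vx)$ and $L(\vy)=\Ocr(\vz+\vy)$, and Serre duality combined with the $\Hom$-formula for twisted structure sheaves recalled above gives
\[
\dual\Ext^1\bigl(L(\vx),L(\vy)\bigr)=\Hom_\XX\bigl(\Ocr(\vz+\vy),\Ocr(\vz+\vx+\vom)\bigr)=S_{\vom-\vw},
\]
and symmetrically $\dual\Ext^1(L(\vy),L(\vx))=S_{\vom+\vw}$; in particular both depend only on $\vw$, not on $L$. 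Since $S_\vu\neq0$ precisely for $\vu\geq0$, simultaneous vanishing of the two groups is equivalent to
\[
\vw\not\leq\vom\qquad\text{and}\qquad-\vw\not\leq\vom,
\]
so it remains to show that this pair of conditions is equivalent to $-\vc\leq\vw\leq\vc$.

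For ``$\Rightarrow$'' I would apply \eqref{eq:cplusomega} to $\vc+\vw$ and to $\vc-\vw$: either $\vc+\vw\geq0$, i.e.\ $-\vc\leq\vw$, or $\vc+\vw\leq\vc+\vom$, i.e.\ $\vw\leq\vom$, and the latter is excluded; symmetrically $-\vw\not\leq\vom$ forces $\vw\leq\vc$. For ``$\Leftarrow$'' the one ingredient beyond pure formalities is $\vc+\vom\notin\LL_+$, equivalently $S_{\vc+\vom}=0$; I would obtain it from the normal form $\vc+\vom=\sum_{i=1}^{t}(p_i-1)\vx_i-\vc$ of the dualizing element, in which the coefficient of the canonical element is negative (see \cite{Geigle:Lenzing:1987}). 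Granting this, if $-\vc\leq\vw\leq\vc$ and $\vw\leq\vom$, then $\vc+\vw\geq0$ and $\vom-\vw\geq0$; adding these --- the positive cone being closed under addition --- yields $\vc+\vom\geq0$, a contradiction, and the case $-\vw\leq\vom$ is excluded in the same fashion using $\vc-\vw\geq0$ and $\vom+\vw\geq0$.

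I do not anticipate a real obstacle: the statement is a bookkeeping exercise once Serre duality, the $\Hom$-formula and \eqref{eq:cplusomega} are available. The only input that is not purely formal enters in the converse direction, where one needs $S_{\vc+\vom}=0$ (not merely the self-orthogonality $S_{\vom}=0$ of $\Ocr$) together with the elementary observation that a sum of two elements of $\LL_+$ again lies in $\LL_+$; in particular no case distinction according to the sign of $\chi_\XX$ is required.
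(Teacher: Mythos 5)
Your proof is correct and follows exactly the route the paper intends (the lemma is stated with a \qed precisely because it is the combination of Serre duality, the formula $\Hom(\Ocr(\vx),\Ocr(\vy))=S_{\vy-\vx}$, the alternative \eqref{eq:cplusomega}, and the fact that $\vc+\vom\notin\LL_+$ read off from its normal form). The only difference is that you have written out the bookkeeping the authors leave implicit.
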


\subsection{Euler form}

The \emph{Euler form} $\eul{-,-}$ is the bilinear form on the
\emph{Grothendieck group} $\Groth(\XX)$ given on classes of objects by
$$
\eul{\class{E},\class{F}}=\dim_k \Hom(E,F)-\dim_k \Ext^1(E,F).
$$
As an abelian group $\Groth(\XX)$ is free of rank $n=2+\sum_{i=1}^t (p_i-1)$.

\subsection{Rank, degree, slope and stability}
\label{ssect:slope}

Rank and degree define linear forms
$\rk,\deg\colon\Groth(\XX)\rightarrow \ZZ$ characterized by the
properties: $\rk(\Oo(\vx))=1$ and $\deg(\Oo(\vx))=\delta(\vx)$ for
each $\vx$ in $\LL$. The rank (resp.\ degree) is strictly positive on
non-zero vector bundles (resp.\ non-zero sheaves of finite
length). Then for each non-zero sheaf $X$ the quotient $
\mu(X)={\deg(X)}/{\rk(X)} $ is a well defined member of
$\QQ\cup\{\infty\}$, called the \emph{slope} of $X$.  With these
notations, we have that
\begin{equation} \label{eq:slope+shift}
\mu(\tau E)=\mu(E)+\delta(\vom)
\end{equation}
for each object $E$.

A non-zero vector bundle $E$ is called \emph{stable} (resp.\
\emph{semistable}) if $\mu E'< \mu E$ (resp.\ $\mu E'\leq \mu E$)
holds for each proper subobject $E'$ of $E$.

\subsection{Exceptional objects and perpendicular categories}
An object $E$ in $\coh\XX$ (or more generally in its bounded derived category $\Der(\coh\XX)$)
is called \emph{exceptional} if $\End(E)=k$
and $E$ has no self-extensions which by heredity of $\coh\XX$ amounts
to $\Ext^1(E,E)=0$. Each exceptional sheaf of finite length is
concentrated in an \emph{exceptional point}, say $\lambda_i$ of weight
$p_i$, and then has length at most $p_i-1$. Each exceptional sheaf $E$
is uniquely determined by its class in $\Knull(\coh\XX)$,
see~\cite{Huebner:1996} or \cite{Meltzer:2004}.

An \emph{exceptional sequence} $E_1,\ldots,E_n$, say in $\Der(\coh\XX)$, consists of exceptional objects such that, whenever $j>i$, we have $\Hom(E_j,E_i[m])=0$ for all integers $m$. If, moreover, $n$ equals the rank of the Grothendieck group of $\coh\XX$, we call the sequence \emph{complete}. The indecomposable summands of a (multiplicity-free) tilting object $T$ in $\coh\XX$ (or $\Der(\coh\XX)$) can always arranged as a complete exceptional sequence.

If $E$ is exceptional in $\coh\XX$, its right perpendicular category $\rperp{E}$ consists of all objects $X$ from $\coh\XX$ satisfying $\Hom(E,X)=0=\Ext^1(E,X)$. It is again an abelian hereditary category with Serre duality. If $\Dd=\Der(\coh\XX)$ denotes the bounded \emph{derived} category, it is also possible to form the right perpendicular category of $E^{\perp_\Dd}$, formed in $\Dd$, consisting of all objects $X$ of $\Dd$ satisfying $\Hom(E,X[m])=0$ for each integer $m$. As is easily seen, $E^{\perp_\Dd}$ equals $\Der(\rperp{E})$.

\subsection{Coxeter polynomials} Any triangulated category $\Tt$ with a tilting object satisfies Serre duality in the form $\dual{\Hom(X,Y[1])}=\Hom(Y,\tau X)$ for a self-equivalence $\tau$ of $\Tt$. On the Grothendieck group $\Knull(\Tt)$, the equivalence $\tau$ induces an invertible $\ZZ$-linear map, the \emph{Coxeter transformation} of $\Tt$. Its characteristic polynomial is called the \emph{Coxeter polynomial} of $\Tt$. Typical instances for $\Tt$ are the (bounded) derived category $\Der{\Hh}$, where $\Hh$ is a hereditary category with Serre duality, or the (bounded) derived category $\Der(\mod A)$ of modules over a finite dimensional algebra of finite global dimension.

\subsection{Tilting objects}\label{ssect:tilting}
An object $T\in \coh \XX$ is called a \emph{tilting sheaf} if
$\Ext^1(T,T)=0$ and $T$ generates the category $\coh\XX$
homologically, in the sense that $\Hom(T,X)=0=\Ext^1(T,X)$ implies $X=0$. If further $T$ is a vector bundle, it is called a \emph{tilting bundle}.  In the terminology of \cite{Lenzing:Meltzer:1996} the endomorphism algebras of tilting bundles are \emph{concealed-canonical} and thus
by~\cite{Lenzing:Meltzer:1996} have a sincere separating tubular
family (subcategory) of stable tubes. Moreover,
by~\cite{Lenzing:Pena:1999} the existence of such a separating
subcategory characterizes concealed-canonical algebras.

 The line bundles $\Ocr(\vx)$, $ 0\leq\vx\leq\vc$, yield
the \emph{canonical tilting bundle} $T_{\can}=\bigoplus_{0\leq \vx\
  \leq \vc} \Ocr(\vx)$ for $\coh\XX$. Its endomorphism ring is given
by the quiver
\begin{center}
$$
\xymatrix@!0@R20pt@C30pt{
             && \Oo(\vx_1)\ar[rr]^{x_1}&&\Oo(2\vx_1)\ar[rr]&&\cdots\ar[rr]&&\Oo((p_1-1)\vx_1)\ar[rrddd]^{x_1}\\
             &&                  &&                 &&             &&                          \\
             &&\Oo(\vx_2)\ar[rr]_{x_2}&&\Oo(2\vx_2)\ar[rr]&&\cdots\ar[rr]&&\Oo((p_2-1)\vx_2)\ar[rrd]_{x_2}\\
\Oo\ar[rruuu]^{x_1}\ar[rru]_{x_2}\ar[rrdd]^{x_t}&&           &&                 &&     &&   &&\Oo(\vc)\\
             &&              &&     \vdots       &&             &&      \vdots\\
             &&\Oo(\vx_t)\ar[rr]^{x_t}&&\Oo(2\vx_t)\ar[rr]&&\cdots\ar[rr]&&\Oo((p_t-1)\vx_t)\ar[rruu]^{x_t}                          \\
}
$$
\end{center}
with the \emph{canonical relations}
\begin{equation} \label{eq:can_relations}
x_i^{p_i}=x_2^{p_2}-\la_i x_1^{p_1}, \quad i=3,\ldots,t.
\end{equation}
This algebra is the \emph{canonical algebra associated with} $\XX$.

Another tilting sheaf in $\coh\XX$, competing with $T_{\can}$ for the
role to be `the most natural tilting sheaf' is the \emph{squid tilting sheaf}
$T_{\squid}$, which we are going to define now.
For each $i$ from $1$
to $t=t(\XX)$ there is exactly one simple sheaf $S_i$, concentrated in
$\la_i$ satisfying $\Hom(\Oo,S_i)\neq0$. Note for this purpose that
$\la_1=\infty$ and $\la_2=0$. Moreover, there exists a sequence of
exceptional objects of finite length together with epimorphisms
$$
B_i:S_i^{[p_i-1]}\epi S_i^{[p_i-2]}\epi \cdots \epi S_i^{[1]}=S_i,
$$
where $S_i^{[j]}$ has length $j$ and top $S_i$. The direct sum of
$\Oo$, $\Oo(\vc)$ and all the $S_i^{[j]}$, $i=1,\ldots,t$,
$j=1,\ldots,p_i-1$, then forms
the tilting sheaf $T_{\squid}$
\begin{center}
\begin{picture}(225,94)
\put(45,47){%
\put(76,-1){%
	\put(0,40){\vector(1,0){15}}
	\put(0,15){\vector(1,0){15}}
	\put(0,-40){\vector(1,0){15}}%
	}%
\put(123,-1){%
\put(0,40){%
	\put(0,0){\line(1,0){9}}
	\multiput(15,0)(3,0){3}{\line(1,0){2}}
	\put(29,0){\vector(1,0){9}}
}%
\put(0,15){%
	\put(0,0){\line(1,0){9}}
	\multiput(15,0)(3,0){3}{\line(1,0){2}}
	\put(29,0){\vector(1,0){9}}
}%
\put(0,-40){%
	\put(0,0){\line(1,0){9}}
	\multiput(15,0)(3,0){3}{\line(1,0){2}}
	\put(29,0){\vector(1,0){9}}
}%
}%
\put(6,6){\vector(3,2){39}}
\put(12,3){\vector(4,1){31}}
\put(8,-8){\vector(3,-2){39}}
\multiput(-34,2)(0,-4){2}{\vector(1,0){21}}
\put(-40,0){\HVCenter{\small $\Oo$}}
\put(0,0){\HVCenter{\small $\Oo(\vc\,)$}}
\put(60,0){%
	\put(0,40){\HVCenter{\small $S_1^{[p_1-1]}$}}
	\put(0,15){\HVCenter{\small $S_2^{[p_2-1]}$}}
	\put(0,-40){\HVCenter{\small $S_t^{[p_t-1]}$}}
	}%
\put(107,0){%
	\put(0,40){\HVCenter{\small $S_1^{[p_1-2]}$}}
	\put(0,15){\HVCenter{\small $S_2^{[p_2-2]}$}}
	\put(0,-40){\HVCenter{\small $S_t^{[p_t-2]}$}}
	}%
\put(170,0){%
	\put(0,40){\HVCenter{\small $S_1^{[1]}$}}
	\put(0,15){\HVCenter{\small $S_2^{[1]}$}}
	\put(0,-40){\HVCenter{\small $S_t^{[1]}$}}
	}%
\put(60,-12.5){\HVCenter{$\vdots$}}%
\put(107,-12.5){\HVCenter{$\vdots$}}%
\put(170,-12.5){\HVCenter{$\vdots$}}%
\put(-22,8){\HVCenter{\small $x_1$}}
\put(-22,-8){\HVCenter{\small $x_2$}}
\put(22,24){\HVCenter{\small $y_1$}}
\put(32,14){\HVCenter{\small $y_2$}}
\put(22,-24){\HVCenter{\small $y_t$}}
}%
\end{picture}
\end{center}
whose endomorphism algebra is the \emph{squid algebra} $C_\squid$
associated with $\XX$. It is given by the above quiver and subject to
the relations
$$
y_1x_1=0,\; y_2x_2=0,\; y_i(x_2-\la_ix_1)=0 \textrm{ for }
i=3,\ldots,t.
$$

A less known tilting object, actually a tilting complex $T_\cox$ in
$\Der(\coh\XX)$, is displayed below. It is called \emph{Coxeter-Dynkin
  configuration of canonical type} and exists for $t(\XX)\geq2$. As
the squid it consists of two line bundles and of $t=t(\XX)$ branches
of finite length sheaves, up to translation in
$\Der(\coh\XX)$. Following \cite{Lenzing:Pena:2011}, where the dual
algebra is considered, its endomorphism ring $C_\cox$ is called a
\emph{Coxeter-Dynkin algebra of canonical type}. Such algebras,
actually their underlying bigraphs, play a prominent role in
singularity theory, compare for instance~\cite{Ebeling:2007}.
\def\nedots{\begin{picture}(20,2.0)(1,5.0) \put(2.5,2.5){$\cdot$}
    \put(5.0,5.0){$\cdot$} \put(7.5,7.5){$\cdot$}
\end{picture}}

\def\needots{\mbox{\begin{picture}(12,1.0)(1,5.0)
\put(2.2,3.75){$\cdot$}
\put(5.0,5.0){$\cdot$}
\put(7.8,6.25){$\cdot$}
\end{picture}}}

\def\seedots{\begin{picture}(12,1.0)(1,5.0)
\put(2.2,6.25){$\cdot$}
\put(5.0,5.0){$\cdot$}
\put(7.8,3.75){$\cdot$}
\end{picture}}
\begin{figure}[H]
$$
\xymatrix@!0@C32pt@R32pt{
&&S_1^{(p_1-1)}\ar[rrdd]^<<<<<{\al_1}\ar[rr]&&S_1^{(p_1-2)}\ar[rr]&&\cdots\ar[r]&S_1^{(2)}\ar[rr]&&S_1^{(1)}&\\
&&S_2^{p_2-1}\ar[rrd]_{\al_2}\ar[rr]&&S_2^{(p_2-2)}\ar[rr]&&\cdots\ar[r]&S_2^{(2)}\ar[rr]&&S_2^{(1)}&\\
\Oo(\vc)\ar[rruu]^<<<<<<<<<<<<<<<{\al_1}\ar[rru]_{\al_2}\ar[rrd]^{\al_t}&&\vdots&&\Oo(-\vom)[1]&&&&\vdots&\\
&&S_t^{(p_t-1)}\ar[rru]^{\al_t}\ar[rr]&&S_t^{(p_t-2)}\ar[rr]&&\cdots\ar[r]&S_t^{(2)}\ar[rr]&&S_t^{(1)}&\\
}
$$
\caption{Coxeter-Dynkin algebra of canonical type}
\end{figure}
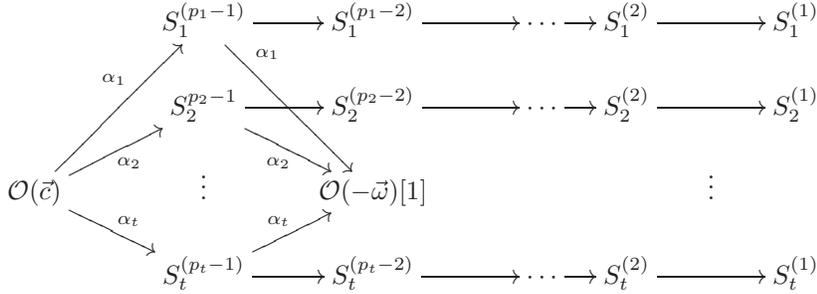
The endomorphism algebra $C_\cox$ of $T_\cox$ is given by the above `quiver' with the two relations
\begin{equation}\label{eq:cox_dynkin}
\sum_{i=2}^t
\al_i^2=0 \quad \text{and} \quad \al_1^2=\sum_{i=3}^t \la_i\,\al_i^2.
\end{equation}
It is remarkable that $C_\cox$ is Schurian for $t(\XX)=3$. Moreover,
for $t(\XX)\geq 5$ the number of relations is strictly less than for
the canonical algebra or the squid algebra. In the tubular case, and
only there, $C_\cox$ can be realized as the endomorphism ring of a
tilting sheaf, actually as the endomorphism ring of a tilting bundle,
see Section~\ref{ssect:tubular_width} for an interesting extremal
property of these algebras.

\subsection{Tubular mutations}\label{ssect:tub_mut}
Assume $\XX$ has Euler characteristic
zero. Tubular mutations are distinguished self-equivalences of
$\Der(\coh\XX)$ playing a key role in the classification of indecomposable
objects. By tilting they are related to Ringel's shrinking
functors from~\cite{Ringel:1984}. Their formal definition is due
to~\cite{Lenzing:Meltzer:1993}. From different perspectives, the
subject is also treated in \cite{Meltzer:1997},
\cite{Lenzing:Pena:1999}, \cite{Kussin:2009}, \cite{Lenzing:2007}. For
quick information we recommend the survey in \cite{Meltzer:2004}.

The tubular mutation $\rho:\Der{\coh\XX}\ra \Der{\coh\XX}$ is a triangle equivalence that
is given on indecomposable objects of slope $\mu{X}>0$ by the universal extension
\begin{equation}
0\lra \bigoplus_{j=1}^{\bp} \Ext^1(X,\tau^j\Oo)\otimes_k \tau^j\Oo \lra \rho X \lra X \lra 0,
\end{equation}
see \cite{Lenzing:Meltzer:1993} or \cite[Section~10.3]{Lenzing:2007}. Another self-equivalence
of $\Der{\coh\XX}$, actually also a tubular mutation, is given by the line bundle shift $\si(X)=X(\vx_t)$, where $\vx_t$ belongs
to the largest weight of $\XX$, thus $\delta(\vx_t)=1$. On the pair $(d,r)=(\deg X, \rk X)$,
the actions induced by $\si$ and $\rho$ are given by the right multiplication with the
matrix $\left(\begin{array}{cc}1&0\\1&1\end{array}\right)$ resp.\ $\left(\begin{array}{cc}
1&1\\0&1
\end{array}\right)$. In particular, $\si$ (resp.\ $\rho$) preserves the rank (resp.\ the degree).
Further, $\si$ (resp. $\rho$) induces an action on slopes, given by the fractional linear
transformation $q\mapsto 1+q$ (resp.\ $q\mapsto q/(1+q)$).

The self-equivalences $\si$ and $\la=\rho^{-1}$ are conjugate, actually
\begin{equation}
\label{eq:conj_from_braid}
\la=(\la\si)^{-1}\si(\la\si)
\end{equation}
which follows from the braid
relation $\si\la\si=\la\si\la$,
see for instance \cite[prop.\ 6.2]{Lenzing:Meltzer:2000}.

\subsection{H\"{u}bner reflections}\label{ssect:Huebner}
A useful tool to construct new tilting sheaves from given ones is by
means of mutations (more precisely, by \emph{H\"{u}bner
  reflections}). We recall the relevant facts from
~\cite{Huebner:1996}, see also~\cite{Huebner:1997}. Let $T$ be a
(multiplicity-free) tilting sheaf on a weighted projective line $\XX$
and assume that $T=T'\oplus E$ with $E$ indecomposable. Then there
exists exactly one indecomposable object $E^\ast$ not isomorphic to
$E$ such that $T^\ast=T'\oplus E^\ast$ is again a tilting sheaf. We
say that $T^\ast$ is the mutation of $T$ at the vertex (corresponding
to) $E$. In more detail, let $Q$ be the quiver of the endomorphism
algebra $A$ of $T$. Let $T_1,\ldots,T_n$ be the (non-isomorphic)
indecomposable summands of $T$ and let $U_1,\ldots,U_n$ be the
corresponding simple $A$-modules, viewed as members of $\Der(\coh\XX)$
where we identify $T_i$ with the $i$-th indecomposable projective
$A$-module. A vertex is called a \emph{formal source} (resp.\ a
\emph{formal sink}) if $U_i$ belongs to $\coh\XX$ (to $(\coh\XX)[1]$,
respectively). Each source (resp.\ sink) of $Q$ is a formal source
(resp.\ a formal sink). Moreover, each vertex $i$ of $Q$ is either a
formal source or a formal sink. Assume that $i$ is a formal sink. Then
there exists an exact sequence
\begin{equation}\label{eq:Huebner:reflection}
0\lra T_i^* \stackrel{u}{\lra} \bigoplus_{j=1}^n T_j^{\kappa_j} \lra T_i \lra 0
\end{equation}
where $\kappa_j$ denote the number of arrows from $j$ to $i$ in
$Q$, and where $u$ collects these arrows. Moreover, we have $\Ext^1(T_i,T_i^\ast)=k$ and
$\Ext^1(T_i^\ast,T_i)=0$. The case of a formal source is dual.

\section{Proofs and more}\label{sect:proofs}
We fix a tilting bundle $T$ on $\XX$ and denote by $A=\End(T)$ its
endomorphism ring. We recall the previous convention to consider
$\mod{A}$ as a full subcategory of $\coh\XX \vee \coh\XX[1]$, and use
the notation $S_1,\ldots,S_n$ for the corresponding simple
$A$-modules. Let $\bw$ denote the class of any homogeneous simple
sheaf $S_0$.  Note that $\rk(x)=\gen{x,\bw}=-\gen{\bw,x}$ for
  each $x\in\Groth(\XX)$.

\begin{lem}[{\cite[Prop.\ 4.26]{Huebner:1996}}] \label{eq:Huebner(a)} \label{eq:Huebner(b)}
Assume $T$ is a tilting bundle on $\XX$. With the preceding notations we have
\begin{equation*}
\sum_{i=1}^n \rk (T_i) \class{S_i}=\bw, \quad
\sum_{i=1}^n \rk (S_i) \class{T_i}=-\bw.
\end{equation*}
\end{lem}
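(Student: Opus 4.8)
The plan is to exploit the fact that $T_1,\dots,T_n$ and the shifted simples $S_1,\dots,S_n$ are, up to sign and degree shift, dual bases of the Grothendieck group $\Groth(\XX)$ with respect to the Euler form, together with the rank formula $\rk(x)=\eul{x,\bw}=-\eul{\bw,x}$ recorded just before the statement. The key identity is the standard tilting relation: since $T$ identifies $\mod A$ with a subcategory of $\Der(\coh\XX)$ in which $T_i$ is the $i$-th indecomposable projective and $S_j$ the $j$-th simple, we have $\eul{\class{T_i},\class{S_j}} = \delta_{ij}$ (the $(i,j)$-entry of the Cartan matrix times its inverse), where $\eul{-,-}$ here means the Euler form of $\Der(\coh\XX)$, which agrees with that of $\coh\XX$ on classes. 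This orthogonality is what makes both displayed sums collapse.

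First I would establish the left-hand identity $\sum_i \rk(T_i)\class{S_i}=\bw$. Write $\bw=\sum_j c_j\class{S_j}$ in the basis $\{\class{S_j}\}$ of $\Groth(\XX)$ (which is a basis since the $S_j$ are the simples of $A$ and $\Groth(\XX)\cong\Groth(\mod A)$). Pairing on the left with $\class{T_i}$ via the Euler form and using $\eul{\class{T_i},\class{S_j}}=\delta_{ij}$ gives $c_i = \eul{\class{T_i},\bw} = \rk(T_i)$, by the rank formula. This proves the first equality. For the second identity $\sum_j \rk(S_j)\class{T_i}=-\bw$, I would instead expand $-\bw=\sum_i d_i\class{T_i}$ in the basis $\{\class{T_i}\}$ and pair on the \emph{right} with $\class{S_j}$: this yields $d_j = \eul{-\bw,\class{S_j}}$, and by the rank formula $\eul{-\bw,\class{S_j}} = -\eul{\bw,\class{S_j}} = \rk(S_j)$ (using $\rk(x)=-\eul{\bw,x}$). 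Hence $d_j=\rk(S_j)$, which is the second displayed equation. Note that here $\rk(S_j)$ may be negative — precisely when $S_j$ lies in $\vect\XX[1]$ — but the formula is insensitive to that.

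The main point requiring care — and the only genuine obstacle — is justifying the orthogonality $\eul{\class{T_i},\class{S_j}}=\delta_{ij}$ as an identity on $\Groth(\XX)$ (equivalently $\Groth(\Der(\coh\XX))$), rather than merely inside $\Der(\mod A)$. This follows because the derived equivalence $\Der(\mod A)\simeq\Der(\coh\XX)$ induced by $T$ is an isometry for the Euler forms, sending the class of the $i$-th projective to $\class{T_i}$ and the class of the $i$-th simple to $\class{S_i}$; and in $\Der(\mod A)$ one has $\eul{\class{P_i},\class{S_j}} = \dim_k\Hom(P_i,S_j) = \delta_{ij}$ since $A$ has finite global dimension (so higher $\Ext$ groups between a projective and a module vanish) and $P_i$ is indecomposable projective covering $S_i$. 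Once this is in hand, both identities are immediate linear-algebra consequences as above, so the proof is short; everything rests on recognizing the two displayed sums as the two ways of writing $\pm\bw$ in the mutually Euler-dual bases $\{\class{T_i}\}$ and $\{\class{S_i}\}$.
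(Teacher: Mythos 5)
Your proposal is correct and follows essentially the same argument as the paper: expand $\bw$ in the basis $\{\class{S_i}\}$ and $-\bw$ in the basis $\{\class{T_i}\}$, then extract coefficients via the duality $\eul{\class{T_i},\class{S_j}}=\delta_{ij}$ and the rank formula $\rk(x)=\eul{x,\bw}=-\eul{\bw,x}$. The only difference is that you spell out the justification of the orthogonality relation (via the derived equivalence induced by $T$), which the paper simply asserts.
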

\begin{proof}
  Indeed, the Euler form $\gen{-,-}$ satisfies
  $\gen{\class{T_i},\class{S_j}}=\delta_{ij}$.  Expressing $\bw$ in
  the basis of the simples $\bw=\sum_{i=1}^n\al_i \class{S_i}$, we
  get
$$
\rk(T_j)=\gen{\class{T_j},\bw}=\sum_{i=1}^n\al_i\gen{\class{T_j},\class{S_i}}=\sum_{i=1}^n
\al_i \delta_{ij}=\al_j,
$$
and the first formula follows. Expressing $-\bw$ in the base of the
projectives $-\bw=\sum_{i=1}^n \beta_i \class{T_i}$, we get
$$
\rk(S_j)=\gen{\class{S_j},\bw}=-\gen{\bw,\class{S_j}}=\sum_{i=1}^n \beta_i \gen{\class{T_i},\class{S_j}}=\beta_j.
$$
This shows the
second formula.
\end{proof}

\subsection{Maximal number of central simples} We are now going to
prove Theorem \ref{thm:main1}. Denote by $S_1,\ldots,S_n$ the simple
$\End(T)$-modules corresponding to the projectives $T_1,\ldots,T_n$
respectively. We may assume that $T_1,\ldots,T_n$ form an exceptional sequence, a fact implying that the simples $S_n,\ldots,S_1$ form an exceptional
sequence in the reverse direction.

To prove the first claim of the theorem, we observe that the vertex
associated to $T_1$, resp.\ to $T_n$ is a source, resp.\ a sink, of the
quiver of $A=\End(T)$. Hence $S_1=T_1$ is simple projective over $A$ of
positive rank and $S_n=\tau T_n[1]$ is simple injective over $A$ of
negative rank. Since central simple $A$-modules have rank zero, we
conclude that the number of central simple modules is at most
$n-2$. The bound $n-2$ is actually attained for the canonical tilting
bundle $T_\can$: Then $0\ra \Oo((j-1) \vx_i)\ra \Oo(j\vx_i)\ra
S_{i,j}\ra 0$ is a projective resolution of the simple $A$-module
$S_{i,j}$ associated to the projective $\Oo(j\vx_i)$ for
$j=1,\ldots,p_i-1$. Hence $\rk S_{i,j}=\rk \Oo(j\vx_i)-\rk
\Oo((j-1)\vx_i)=1-1=0$ showing that $S_{i,j}$ is a central simple
$A$-module.

We next assume that $T$ is a tilting bundle with $n-2$ central simple
modules over $A$. As before we conclude that $S_1=T_1$ (resp.\ $S_n=\tau T_n[1]$) is a
simple projective (resp.\ simple injective) $A$-module of positive
(resp.\ negative) rank. Hence the simple $A$-modules
$S_2,\ldots,S_{n-1}$ have rank zero.

Applying Lemma~\ref{eq:Huebner(b)} we obtain $\rk
(S_1)\class{T_1}+\rk(S_n)\class{T_n}=-\bw$. Since $S_1=T_1$ and
$S_n=\tau T_n[1]$ we have $\rk(S_1)=\rk(T_1)$ and
$\rk(S_n)=-\rk(T_n)$.  Consequently
\begin{equation}
\label{eq:p1pn}
\rk(T_1)\class{T_1}+\bw=\rk(T_n)\class{T_n}
\end{equation}
Applying the rank function to \eqref{eq:p1pn} we get $(\rk T_1)^2=(\rk
T_n)^2$, and conclude that $\rk T_1=\rk T_n$ since both values
are positive. Calling this common value $\rho$, then \eqref{eq:p1pn} implies that
$\bw=\rho(\class{T_n}-\class{T_1})$. Since $\bw$ is indivisible in the
Grothendieck group $\Knull(\coh\XX)$ we further get $\rho=1$. Hence
$T_1=L$ and $T_n$ are line bundles and now \eqref{eq:p1pn}
implies that $T_n=L(\vc)$.

If $\deg(S_1)=q$, it follows from \eqref{eq:slope+shift} that
  $\deg(S_n)=\deg(\tau T_n[1])=-\deg(\tau
  T_n)=-(\deg(L(\vc))+\delta(\vom))=-(q+\ovp+\delta(\vom))$.  Invoking $\deg \bw=\ovp$
and additionally Lemma~\ref{eq:Huebner(a)} we obtain
\begin{align*}
\ovp&=\rk T_1 \deg S_1 +\rk T_n \deg S_n +\sum_{h=2}^{n-1} \rk T_h \deg{S_h}\\
&=-(\ovp+\delta(\vom))+\sum_{h=2}^{n-1} \rk T_h \deg{S_h}.
\end{align*}
Hence we get
\begin{equation}
\label{eq:sandwich1}
2\ovp+\delta(\vom)=\sum_{h=2}^{n-1} \rk T_h \deg{S_h}
\geq \sum_{h=2}^{n-1}  \deg{S_h}.
\end{equation}
By assumption the $A$-modules $S_i$, $i=2,\ldots,n-1$, are simple
exceptional sheaves of rank zero. Since further $S_{n-1}, \ldots, S_2$
form an exceptional sequence, each exceptional tube of $\cohnull\XX$
with $p_j$ simple sheaves can contain at most $p_j-1$ simple
$A$-modules. Since $\sum_{j=1}^t(p_j-1)=n-2$, our assumption on the
number of central simples implies that each exceptional tube of rank
$p_j$ contains exactly $p_j-1$ of them. Using further that $\deg
(S_h)\geq\frac{p}{p_j}$ if $S_h$, $h=2,\ldots,n-1$, belongs to an
exceptional tube of rank $p_j$, we thus obtain:
\begin{equation}
\label{eq:sandwich2}
\sum_{h=2}^{n-1}  \deg{S_h}\geq \sum_{j=1}^{t}
(p_j-1)\frac{\ovp}{p_j}=t\cdot\ovp+\sum_{j=1}^t\frac{1}{p_j}=2\ovp+\delta(\vom).
\end{equation}
This implies that inequality \eqref{eq:sandwich1} is indeed an
equality, which proves that $\rk T_i=1$ for all $i=1,\ldots,n$.

Thus $T$ is a direct sum of line bundles $T_i=L(\vy_i)$ and, moreover,
$T_1=L$ and $T_n=L(\vc)$. Applying Lemma \ref{lem:ext_line} to the
pairs $L$, $L(\vy_i)$ and $L(\vy_i)$, $L(\vc)$, we then obtain $0\leq
\vy_i\leq \vc$.  This shows that $T=\bigoplus_{0\leq \vx\leq \vc}
L(\vx)$ is the canonical tilting bundle up to a line bundle twist,
and thus finishes the proof of Theorem \ref{thm:main1}.


\subsection{Maximal number of line bundles}
We now prove Theorem \ref{thm:main0}.  Since $T=\bigoplus_{i=1}^n T_i$
is tilting, we obtain $ [\Oo]=\sum_{i=1}^n m_i[T_i]$  with
$m_i\in\ZZ$. Passing to ranks we obtain that the common rank $r$ of
the $T_i$ divides $1$. Hence $r=1$ follows, and thus
$T=\bigoplus_{\vy\in J} \Ocr(\vy)$ for some subset $J\subset \LL$ of
cardinality $n=2+\sum_{i=1}^t(p_i-1)$ where $\pp=(p_1,\ldots,p_t)$ is
the weight sequence of $\XX$. By means of a line bundle twist, we
may assume that (i) $0\in J$ and, moreover, (ii) $0\leq\delta(\vx)$
for all $\vx\in J$.

Lemma \ref{lem:ext_line} implies (iii) $-\vc \leq \vx\leq \vc$ for
each $\vx\in J$. Invoking the normal form $\vx=\sum_{i=1}^t
\ell_i\vx_i+\ell\vc$ with $0\leq \ell_i<p_i$ and $\ell\in\ZZ$,
conditions (ii) and (iii) imply that $\ell\in\{-1,0,1\}$. Note that
$\ell\in\{0,1\}$ implies that $\vx=a\vx_i$ for some $i=1,\ldots,t$ and
$0\leq a \leq p_i$. If $\ell=-1$, then the inequality $0\leq \vx+\vc
=\sum_{i=1}^t \ell_i\vx_i\leq 2\vc$ shows that exactly two of the
summands $\ell_i\vx_i$ are non-zero. Hence $\vx+\vc=\ell_i\vx_i
+\ell_j\vx_j$ with $i\neq j$ and then $\vx=a_i\vx_i-b_j\vx_j$ with
$0<a_i<p_i$ and $0<b_j<p_j$. In the first case, where $\ell\in\{0,1\}$, we call $\vx$
\emph{unmixed}, in the second case, where $\ell=-1$, $\vx$ is called \emph{mixed}.

We distinguish the two cases (a) $\vc\in J$ and (b) $\vc\notin J$.

\emph{Case (a):} If $\vc$ belongs to $J$, then
Lemma~\ref{lem:ext_line} implies $0\leq\vx\leq\vc$ for each $\vx\in J$
implying that $T=T_\can$ by cardinality reasons.

\emph{Case (b):} If $\vc$ does not belong to $J$, then $J$ contains a
mixed element, say, $\vy=a_1\vx_1-a_2\vx_2$ with $0<a_1<p_1$ and
$0<a_2<p_2$.

We are going to show that then $t(\XX)=2$ and first claim that $J\subset
\ZZ\vx_1+\ZZ\vx_2$. Indeed let $0\neq\vx\in J$, say,
$\vx=b_i\vx_i-b_j\vx_j$ where $i\neq j$, $i,j=1,\ldots,t$,
$0<b_i<p_i$, and $0\leq b_j<p_j$. Note that $b_i=0$ is impossible
since $\vx\neq0$ and $\delta(\vx)\geq0$. By Lemma~\ref{lem:ext_line}
we get $0\leq \vy-\vx +\vc \leq 2\vc$, hence
$$
0\leq a_1\vx_1 +b_j\vx_j +(p_2-a_2)\vx_2-b_i\vx_i \leq 2\vc.
$$
Since $i\neq j$ this is only possible if $i\in\{1,2\}$. If $b_j=0$,
then $\vx=b_i\vx_i$ belongs to $\ZZ\vx_1+\ZZ\vx_2$. If $b_j\neq0$,
then reversing the roles of $\vx$ and $\vy$ in the preceding argument
we get that also $j$ belongs to $\{1,2\}$. Summarizing we obtain that
$J\subset \ZZ\vx_1+\ZZ\vx_2$.

Finally assume that $t(\XX)\geq3$. Let $S_3$ be a simple sheaf
concentrated in the third exceptional point of weight $p_3$ and such
that $\Hom(\Oo,S_3)=0$. Since $S_3(\vx_i)=S_3$ for each $i\neq 3$ we
obtain that $\Hom(\Oo(\vx),S_3)=0$ for each $\vx$ from
$\ZZ\vx_1+\ZZ\vx_2$. In particular, we get $\Hom(T,S_3)=0$. Because
$T$ is a vector bundle, we further obtain $\Ext^1(T,S_3)=0$,
contradicting the assumption that $T$ is tilting.  This finishes the
proof of Theorem~\ref{thm:linebdle_max}.

\subsection{Homogeneity}\label{ref:ssect:homogeneity}
Let $A$ be a finite dimensional $k$-algebra, and $E$ a finite dimensional right
$A$-module. Then the $k$-algebra
$$
A[E]=\left(\begin{array}{cc}A&0\\ E&k\end{array}\right)
$$
is called the \emph{one-point extension} of $A$ by $E$. The proof of
Theorem~\ref{thm:homogeneity} is based on the next proposition.

\begin{prop} \label{prop:homogeneous}
Let $T$ be a tilting sheaf in $\coh\XX$ with indecomposable summands $T_1,\ldots,T_n$. Consider the following properties:

$(a)$ $T$ is homogeneous in the abelian category $\coh\XX$.

$(b)$ $T$ is homogeneous in the triangulated category $\Der(\coh\XX)$.

$(c)$ The perpendicular categories $\rperp{T_i}$, formed in $\coh\XX$, are pairwise derived equivalent.

$(d)$ The Coxeter polynomial $\psi'_i$ of $\rperp{T_i}$ does not depend on $i=1,\ldots,n$.

$(e)$ The Coxeter polynomial $\bar\psi_i$ of the one-point extension $A[P_i]$ of $A=\End(T_i)$ by the indecomposable projective $A$-module $P_i$, corresponding to $T_i$, does not depend on $i=1,\ldots,n$.

Then we always have the implications $(a)\implies (b) \implies (c) \implies (d) \iff (e)$. Moreover, if $\XX$ is not tubular, the indecomposable summands of $T$ are line bundles, forcing the equivalence of $(a)$ to $(e)$.
\end{prop}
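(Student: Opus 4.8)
The plan is to establish the chain of implications one link at a time, reserving the real work for the final clause about non-tubular $\XX$. For $(a)\implies(b)$ there is nothing to do: a self-equivalence of the abelian category $\coh\XX$ induces a self-equivalence of $\Der(\coh\XX)$, so homogeneity in $\coh\XX$ is a fortiori homogeneity in $\Der(\coh\XX)$. For $(b)\implies(c)$, suppose $u$ is a self-equivalence of $\Der(\coh\XX)$ with $u(T_i)\cong T_j$. Since $T_i$ and $T_j$ are exceptional objects, $u$ restricts to an equivalence between the perpendicular categories $T_i^{\perp_\Dd}$ and $T_j^{\perp_\Dd}$, and by the remark in Section~\ref{sect:setup} these are $\Der(\rperp{T_i})$ and $\Der(\rperp{T_j})$; hence the two hereditary categories are derived equivalent. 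For $(c)\implies(d)$: derived equivalent categories have isomorphic Grothendieck groups with matching Coxeter transformations (the Coxeter transformation is induced by the Serre functor, which is intrinsic to the triangulated category), so the Coxeter polynomials agree. The equivalence $(d)\iff(e)$ should follow from a direct comparison of Coxeter polynomials: the one-point extension $A[P_i]$ is derived equivalent to a category built from $A$ and $P_i$, and the Coxeter polynomial of $A[P_i]$ factors (up to a controllable factor like $(\lambda-1)$ coming from the extra vertex) in terms of data attached to $T_i$ that is equivalent to the Coxeter polynomial of $\rperp{T_i}$; I would make this precise via the formula relating the Coxeter matrix of a one-point extension to that of $A$ together with the class $[P_i]$, and the known relation between $\rperp{T_i}$ and the algebra obtained by "deleting" the vertex $i$.

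For the last sentence — if $\XX$ is not tubular, then under any of $(a)$–$(e)$ all summands $T_i$ are line bundles — the strategy is to work with $(d)$, the weakest condition, and exploit that for a tilting sheaf summand $T_i$ the perpendicular category $\rperp{T_i}$ is again (the sheaf category of, or module category over) a tame/wild object whose Coxeter polynomial encodes $\chi_\XX$-type numerical invariants, in particular detects whether $T_i$ has rank one. Concretely: $\rperp{T_i}$ has Grothendieck group of rank $n-1$, and the key numerical invariant is how the "defect" or the spectral radius of the Coxeter transformation depends on $\rk T_i$. Among the summands there is always at least one line bundle — for instance, arranging the $T_i$ as a complete exceptional sequence, the first term $T_1$ is a source and is simple projective, and a source of minimal slope in a tilting bundle can be taken to be a line bundle after a tubular mutation argument, or one invokes that $\rperp{T_i}$ for $T_i$ a line bundle is itself $\coh\YY$ for a weighted projective line $\YY$ with one fewer weight (this is the classical perpendicular calculus of Geigle–Lenzing), whereas for $T_i$ of rank $\geq 2$ the category $\rperp{T_i}$ is not of this form and has a visibly different Coxeter polynomial. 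Matching all $\psi'_i$ then forces all $\rperp{T_i}$ to be of weighted-projective-line type, hence all $T_i$ to be line bundles.

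The main obstacle is precisely this last step: extracting from "equal Coxeter polynomials" the geometric conclusion "all summands have rank one". The Coxeter polynomial is a coarse invariant, so I expect the argument to require a careful analysis of the possible Coxeter polynomials of $\rperp{E}$ as $E$ ranges over exceptional bundles of various ranks — using that for non-tubular $\XX$ (so $\chi_\XX\neq 0$) the Coxeter transformation has a controlled spectral behavior (a unique eigenvalue off the unit circle in the wild case, a non-trivial Jordan block at $1$ in the domestic case) whose multiplicity/size is governed by $\rk E$. This is where the hypothesis $\chi_\XX\neq 0$ is essential and where the tubular case genuinely fails, matching the remark after the proposition and Example~\ref{ex:instructiveI}. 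A secondary, more routine obstacle is the bookkeeping in $(d)\iff(e)$ to pin down the exact extra polynomial factor introduced by the one-point extension; I expect this to be $(\lambda-1)$ or a product of cyclotomic factors independent of $i$, so that it cancels in the comparison.
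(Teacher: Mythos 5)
Your chain $(a)\Rightarrow(b)\Rightarrow(c)\Rightarrow(d)$ matches the paper's proof, and your guess for $(d)\iff(e)$ points in the right direction: the paper makes it precise via the reciprocity formula $P_{T_i}=(\psi-x\psi'_i)/\psi=(\bar\psi_i-x\psi)/\psi$ relating both Coxeter polynomials to the Hilbert--Poincar\'{e} series $P_{T_i}=\sum_{n\ge0}\LF{\class{T_i}}{\class{\tau^nT_i}}x^n$, so $\psi'_i$ and $\bar\psi_i$ determine each other through $\psi$. That step is cited, not reproved, so your sketch is acceptable as a plan, though you would still need to locate the exact formula rather than the ``$(\lambda-1)$ factor'' you conjecture.

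The genuine gap is in the final step. Your route --- classify the possible Coxeter polynomials of $\rperp{E}$ by the rank of $E$, and first secure one line bundle summand --- does not work as stated. First, the claim that a tilting bundle always has a line bundle summand (e.g.\ that the source $T_1$ of the exceptional sequence can be taken to be a line bundle) is false for $\chi_\XX\le 0$: the paper itself exhibits tilting bundles with no line bundle summands (Theorem~\ref{thm:Kerner:Skow}, Proposition~\ref{prop:min_number_lbundle}(ii), Figure~\ref{fig:min_wild}). Second, your perpendicular calculus is off: for a \emph{line bundle} $L$ the category $\rperp{L}$ is the module category of a hereditary star algebra, not $\coh\YY$ for a line with one fewer weight (that is the perpendicular category of an exceptional \emph{simple sheaf}); and the assertion that higher-rank exceptional bundles have ``visibly different'' Coxeter polynomials is exactly the hard point you defer, with no mechanism to prove it. The paper avoids all of this: equal Coxeter polynomials force, via reciprocity, equal series $P_{T_i}$; the identity $\tau^{\bp}x=x+\rk(x)\,\delta(\vom)\,\bw$ in $\Knull(\coh\XX)$ gives $\al_{\bp}-\al_0=\rk(T_i)^2\,\delta(\vom)$ for the coefficients of $P_{T_i}$, so all $\rk(T_i)$ coincide since $\delta(\vom)\neq0$ off the tubular case; then Theorem~\ref{thm:main0} (which you never invoke, but which is the indispensable input) upgrades ``equal ranks'' to ``rank one''. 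Without that coefficient computation, or some substitute extracting the rank from $\psi'_i$, your argument does not close.
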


\begin{proof}
$(a)\implies (b)$: Each self-equivalence of $\coh\XX$ extends to a self-equivalence of $\Der(\coh\XX)$.

$(b)\implies (c)$: With $\Dd=\Der(\coh\XX)$, the right perpendicular category $T_i^{\perp_\Dd}$ formed in $\Dd$, equals the derived category of $\rperp{T_i}$.

$(c)\implies (d)$: The Coxeter polynomial is preserved under derived equivalence.

$(d)\iff (e)$: By \cite[Prop. 18.3 and Cor. 18.2]{Lenzing:1999}, see also \cite[Prop. 4.5]{Lenzing:Pena:2008}, the Coxeter polynomials $\psi'_i$ of $\rperp{T_i}$ and $\bar{\psi}_i$ of the one-point extension $A[P_i]$ are related by the reciprocity formula
\begin{equation} \label{eq:reciprocity}
P_{T_i}=\frac{\psi-x\psi'_i}{\psi}=\frac{\bar{\psi_i}-x\psi}{\psi},
\end{equation}
where $\psi$ denotes the Coxeter polynomial of $\coh\XX$, and
\begin{equation} \label{eq:Hilbert:Poincare}
P_{T_i}=\sum_{n=0}^\infty \LF{\class{T_i}}{\class{\tau^n T_i}}\,x^n
\end{equation}
denotes the \emph{Hilbert-Poincar\'{e} series} of $T_i$. The equivalence of conditions $(d)$ and $(e)$ is now implied by formula \eqref{eq:reciprocity}, thus finishing the proof of the first claim.

We next assume that $\XX$ is not tubular, and that $(d)$ or $(e)$ holds which forces by \eqref{eq:reciprocity} all the $P_{T_i}$ to be equal.  We claim that all the $T_i$ have the same rank. Let $\al_m$ denote the $m$-th coefficient of $P_{T_i}$. Denoting, as usual, by $\bp$ the least common multiple of the weights, we have for each $x$ in $\Knull(\coh\XX)$
$$
\tau^{\bp}x= x + \rk(x)\,\delta(\vom)\,\bw,
$$
where $\bw$ denotes the class of any ordinary simple sheaf on $\XX$. This is implied by the formula $\bp\vom=\delta(\vom)\vc$ from~\cite{Geigle:Lenzing:1987}. It follows that
$$
\al_{\bp}-\al_0=\rk(T_i)\,\delta(\vom)\LF{\class{T_i}}{\bw}=\rk(T_i)^2\,\delta(\vom),
$$
and thus $\rk(T_i)^2\,\delta(\vom)$ does not depend on $i$.
By our assumption $\delta(\vom)\neq0$, then  the rank of $T_i$ does not depend on $i=1,\ldots,n$. Therefore by Theorem~\ref{thm:main0} all the $T$ are line bundles, forcing $T$ to be homogeneous.
\end{proof}
\begin{proof}[Proof of Theorem~\ref{thm:homogeneity}]
By \cite{Lenzing:Meltzer:2000} each self-equivalence of $\coh\XX$ is rank preserving. Hence
the indecomposable summands of each homogeneous tilting sheaf have the same rank. By Theorem~\ref{thm:main0} thus property (i) implies the claim. For the remaining properties (ii) to
(iv) the claim follows from Proposition~\ref{prop:homogeneous}.
\end{proof}

\subsection{Maximal amount of bijections} \label{ref:ssect:max_bij}
We recall that a module $G$ over a finite dimensional $k$-algebra $A$
is called \emph{generic} if (i) $G$ is indecomposable, (ii) $G$ has
finite length over $\End(G)$, and (iii) $G$ has infinite
$k$-dimension. Now let $A$ be an \emph{almost concealed-canonical}
algebra, that is, the endomorphism algebra of a tilting sheaf $T$ on a
weighted projective line $\XX$. The injective hull $\EE(\Oo)$ of the
structure sheaf in the category $\Qcoh\XX$ of quasicoherent sheaves on
$\XX$ equals the \emph{sheaf of rational functions} $\mathcal{K}$.
Under the equivalence $\mathrm{R}\Hom(T,-):\Der(\Qcoh\XX)\ra \Der(\Mod
A)$ the sheaf $\mathcal{K}$ corresponds to a generic $A$-module $G$,
called the \emph{$T$-distinguished generic $A$-module}. It is known,
compare~\cite{Lenzing:1997a,Reiten:Ringel:2006}, that $G$ is the
unique generic $A$-module if $\chi_\XX\neq0$. In the tubular case
$\chi_\XX=0$ the situation is more complicated, since there exists a
rational family $(G^{(q)})_q$ of generic $A$-modules, indexed by a set
of rational numbers; for details we refer to \cite{Lenzing:1997a}.  In
the proper formulation our next result extends also to the generic
modules $G^{(q)}$. Details are left to the reader. Here, we restrict to
the $T$-distinguished case.

We now prove Theorem~\ref{thm:bij_max}. We view $G=\mathcal{K}$ as a
(contravariant) representation of the quiver of $A$. First we show
that each arrow $\al: u \ra v$ induces a monomorphism or an
epimorphism $G_\al:G_v\ra G_u$. It follows from \cite{Lenzing:1997a}
that the endomorphism ring of $G$ equals the rational function field
$K=k(x)$ and, moreover, $\rk T_u=\dim_K(G_u)$ for each vertex $u$ of
the quiver of $A$. By a result of Happel-Ringel~\cite[Lemma
4.1]{Happel:Ringel:1982}, the map $T_\al:T_u\ra T_v$ is a monomorphism
or an epimorphism since $\Ext^1(T_v,T_u)=0$. Since $G$ is injective in
the category of quasi-coherent sheaves on $\XX$ this yields that
$G_\al=\Hom(\al,G):G_v\ra G_u$ is an epimorphism or a monomorphism of
$K$-vector spaces. Hence $G_\al$ is bijective if and only if $\dim_K
G_v=\dim_K G_u$, that is, if and only if $\rk T_u=\rk T_v$. In
particular, for the canonical tilting bundle all $G_\al$ are
bijective. Conversely, assuming that all $G_\al$ are bijective,
connectedness of the quiver of $A$ implies that all $T_u$,
$u=1,\ldots,n$, have the same rank, and then
Theorem~\ref{thm:linebdle_max} implies that $T$ equals $T_\can$ up to
a line bundle twist.

\subsection{Maximal width}
The proof of Theorem~\ref{thm:max_width} is based on the following proposition.
\begin{prop}\label{prop:hom+ext}
  Let $E$ and $F$ be non-zero vector bundles on a weighted projective
  line $\XX$ of arbitrary weight type. Then the following properties
  hold:
\begin{enumerate}[\upshape (i)]
\item If $\mu F -\mu E > \delta(\vc+\vom)$ then $\Hom(E,F)$ is non-zero.
\item If $\Ext^1(F,E)=0$ then $\mu F-\mu E\leq \delta(\vc)$.
\end{enumerate}
\end{prop}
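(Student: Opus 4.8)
\textbf{Proof plan for Proposition~\ref{prop:hom+ext}.}
The plan is to reduce both statements to computations with the Euler form, using the linearity of rank and degree and the almost-linearity of the order on $\LL$. For part (i), I would argue by contradiction: assume $\Hom(E,F)=0$. Then $\eul{\class{E},\class{F}}=-\dim_k\Ext^1(E,F)\leq 0$. On the other hand, Serre duality gives $\Ext^1(E,F)\cong\dual\Hom(F,E(\vom))$, so $\eul{\class{E},\class{F}}=-\dim_k\Hom(F,E(\vom))$. The key point is that if $\mu F-\mu E>\delta(\vc+\vom)$ then, twisting, $\mu(E(\vom))>\mu F+\delta(\vc)$ would be too large to allow $\Hom(F,E(\vom))\neq 0$ to be \emph{forced}, but it does not immediately vanish either. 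So the cleaner route is to use the Riemann–Roch type expansion of the Euler form on $\coh\XX$: for vector bundles $E$, $F$ one has a formula expressing $\eul{\class{E},\class{F}}$ in terms of $\rk E,\rk F,\deg E,\deg F$ plus a correction term supported on the finite-length part; concretely $\eul{x,y}$ is, up to the averaged symmetric part $(\rk x\deg y-\rk y\deg x)$-type term coming from $\delta(\vom)$, controlled by the ranks and degrees. Writing $\eul{\class{E},\class{F}}+\eul{\class{F},\class{E}}$ symmetrizes away the skew part, and $\eul{\class{E},\class{F}}-\eul{\class{F},\class{E}}$ picks out $\rk E\deg F-\rk F\deg E$ up to the factor $\delta(\vom)$; combining these, $2\dim_k\Hom(E,F)\geq \eul{\class{E},\class{F}}+\eul{\class{F},\class{E}}+(\text{skew part})$, and the hypothesis $\mu F-\mu E>\delta(\vc+\vom)$ makes the right-hand side strictly positive, giving $\Hom(E,F)\neq 0$.

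For part (ii), assume $\Ext^1(F,E)=0$; then $\eul{\class{F},\class{E}}=\dim_k\Hom(F,E)\geq 0$. Serre duality turns $\Ext^1(F,E)=0$ into $\Hom(E,F(\vom))=0$. Now suppose for contradiction that $\mu F-\mu E>\delta(\vc)$, i.e.\ $\mu F-\mu(E(\vom))>\delta(\vc)+\delta(\vom)=\delta(\vc+\vom)$. Then part (i), applied to the pair $E(\vom)$ and $F$, forces $\Hom(E(\vom),F)\neq 0$ — but that is not the vanishing we have. The correct pairing: from $\Ext^1(F,E)=0$ and Serre duality we get $\Hom(E,F(\vom))=0$; apply part (i) to the pair $(E,F(\vom))$: if $\mu(F(\vom))-\mu E=\mu F-\mu E+\delta(\vom)>\delta(\vc+\vom)$, i.e.\ $\mu F-\mu E>\delta(\vc)$, then $\Hom(E,F(\vom))\neq 0$, a contradiction. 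Hence $\mu F-\mu E\leq\delta(\vc)$, as claimed.

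The main obstacle I anticipate is making the Riemann–Roch estimate in part (i) fully rigorous: the Euler form on $\coh\XX$ is not purely a function of rank and degree (there is a genuinely finite, weight-dependent contribution), so I need the precise normalized Euler form identity from \cite{Geigle:Lenzing:1987} — something of the shape $\eul{x,y}+\eul{y,x} = \text{(explicit bilinear expression in ranks and degrees)} - (\text{a nonnegative or bounded correction})$ — together with the fact that the skew-symmetric part equals $\tfrac{1}{\delta(\vom)}$ (or a similar normalization) times $(\rk x\deg y-\rk y\deg x)$. Once that identity is in hand, part (i) is a one-line inequality: the hypothesis on slopes bounds the relevant combination of ranks and degrees from below by more than the correction term can absorb, so $\dim_k\Hom(E,F)>0$. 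Part (ii) is then purely formal via Serre duality as above, so essentially all the work is in pinning down the constant $\delta(\vc+\vom)$ in the slope bound of (i) and verifying it matches the correction term exactly.
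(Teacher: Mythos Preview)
Your reduction of (ii) to (i) via Serre duality is correct and is exactly what the paper does: from $\Ext^1(F,E)=0$ one gets $\Hom(E,F(\vom))=0$, and if $\mu F-\mu E>\delta(\vc)$ then $\mu(F(\vom))-\mu E>\delta(\vc+\vom)$, so (i) applied to the pair $(E,F(\vom))$ gives a contradiction.

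For (i), the paper does not give an argument at all; it simply cites \cite[Thm.~2.7]{Lenzing:Pena:1997}. Your proposed Euler-form approach, however, has a genuine gap that you correctly sense but perhaps underestimate. The Euler form $\eul{\class{E},\class{F}}$ on $\Knull(\coh\XX)$ is \emph{not} a function of rank and degree alone: the classes of the exceptional simple sheaves contribute extra coordinates, so no slope hypothesis can by itself force $\eul{\class{E},\class{F}}>0$. The Riemann--Roch identity that does exist (see~\cite{Lenzing:Meltzer:1993}) controls only the $\tau$-\emph{averaged} form $\sum_{j=0}^{\bp-1}\eul{\class{E},\class{\tau^j F}}$ in terms of ranks and degrees; positivity of this sum yields $\Hom(E,\tau^{j}F)\neq0$ for \emph{some} $j$, which is not what you need. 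Your displayed inequality ``$2\dim_k\Hom(E,F)\geq\eul{\class{E},\class{F}}+\eul{\class{F},\class{E}}+(\text{skew part})$'' does not follow from the definitions, and the normalization ``$\tfrac{1}{\delta(\vom)}$'' you mention for the skew part cannot be right, since $\delta(\vom)=0$ in the tubular case.

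The actual argument in \cite{Lenzing:Pena:1997} proceeds differently, via line bundle filtrations: every non-zero vector bundle on $\XX$ admits a line bundle sub and a line bundle quotient whose slopes are controlled relative to $\mu E$, and one reduces to the line bundle case where $\Hom(\Ocr(\vx),\Ocr(\vy))\neq0\iff \vy\geq\vx$, using the almost-linearity~\eqref{eq:cplusomega} of the order on $\LL$ to turn the slope gap $>\delta(\vc+\vom)$ into the required inequality in $\LL$. If you want to supply a self-contained proof of (i), that is the mechanism to implement.
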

\begin{proof}
  Property (i) is shown in {\cite[Thm.\ 2.7]{Lenzing:Pena:1997}}. Further, property (ii) follows from (i)
  by Serre duality.
\end{proof}

It amounts to a significant restriction for $E$ and $F$ to attain the
bound for the slope in part (ii).

\begin{cor}\label{cor:semistable}
  Let $E$ and $F$ be non-zero vector bundles with slope difference $\mu F-\mu
  E=\ovp$ and satisfying
  $\Ext^1(F,E)=0$. Then $E$ and $F$ are semistable.
\end{cor}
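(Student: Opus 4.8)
I want to show that if $E$ and $F$ are non-zero vector bundles with $\mu F - \mu E = \ovp = \delta(\vc)$ and $\Ext^1(F,E) = 0$, then both $E$ and $F$ are semistable. By symmetry (in fact, by Serre duality, which turns a statement about subbundles of $F$ into one about quotient bundles of $E$ and vice versa), it suffices to prove one of the two, say that $F$ is semistable; alternatively I treat $E$ and $F$ in parallel since the arguments are formally dual. The strategy is a proof by contradiction: suppose $F$ is \emph{not} semistable. Then $F$ has a subbundle $F' \subseteq F$ with $\mu F' > \mu F$. I want to manufacture a non-zero element of $\Ext^1(F,E)$, contradicting the hypothesis.

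**The key estimate.** The engine is Proposition~\ref{prop:hom+ext}(i): if $\mu G - \mu H > \delta(\vc+\vom)$ then $\Hom(H,G) \neq 0$. Since $\vom = (t-2)\vc - \sum_{i=1}^t \vx_i$, we have $\delta(\vc+\vom) = \delta(\vc) + \delta(\vom) < \delta(\vc) = \ovp$ (because $\delta(\vom) < 0$ for any weighted projective line, as $\delta(\vom) = \ovp\bigl(t-2-\sum 1/p_i\bigr)$ and actually $\delta(\vc+\vom) = \delta(\vc)+\delta(\vom)$ is strictly less than $\delta(\vc)$). So the hypothesis $\mu F - \mu E = \ovp$ sits exactly at the threshold where we can \emph{almost} force homomorphisms. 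Now if $F$ is not semistable, pick a subbundle $F' \subseteq F$ with $\mu F' > \mu F = \mu E + \ovp > \mu E + \delta(\vc+\vom)$. By Proposition~\ref{prop:hom+ext}(i) applied to the pair $(E, F')$, we get $\Hom(E, F') \neq 0$. Dually, using Serre duality $\dual\Ext^1(F,E) = \Hom(E, F(\vom))$: if $E$ has a quotient bundle $E''$ with $\mu E'' < \mu E$, then $\mu F(\vom) - \mu E'' = \mu F + \delta(\vom) - \mu E'' > \mu E + \ovp + \delta(\vom) - \mu E = \delta(\vc+\vom)$, hence $\Hom(E'', F(\vom)) \neq 0$, which composed with $F(\vom) \to $ ... — I need to be careful that a map into a subobject or out of a quotient actually produces the extension.

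**Producing the non-zero Ext.** The cleanest route: use Serre duality to rewrite $\Ext^1(F,E) \cong \dual{\Hom(E, \tau F)} = \dual{\Hom(E, F(\vom))}$. So $\Ext^1(F,E) = 0$ is equivalent to $\Hom(E, F(\vom)) = 0$. Suppose now $F$ is not semistable; I will instead work with $E$ first. Suppose $E$ is not semistable: then $E$ has a \emph{quotient} bundle $q: E \epi E''$ with $\mu E'' < \mu E$, equivalently $E$ has a subbundle of slope $> \mu E$, hence (passing to a quotient) a quotient of slope $< \mu E$; choose $E''$ torsion-free, i.e. a vector bundle. Then $\mu F(\vom) - \mu E'' > \delta(\vc) + \delta(\vom) = \delta(\vc+\vom)$, so by Proposition~\ref{prop:hom+ext}(i) there is a non-zero $g: E'' \to F(\vom)$. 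The composite $g \circ q: E \to F(\vom)$ is non-zero because $q$ is an epimorphism and $g \neq 0$ (an epimorphism followed by a non-zero map between sheaves on a hereditary category of sheaves on a curve — here $g$ non-zero and $q$ epi gives $gq \neq 0$ since $g$ does not vanish on the image of $q$ which is all of $E''$). Hence $\Hom(E, F(\vom)) \neq 0$, so $\Ext^1(F,E) \neq 0$, a contradiction. This proves $E$ is semistable. For $F$: if $F$ is not semistable it has a subbundle $F' \subseteq F$ (a vector bundle) with $\mu F' > \mu F$, so $\mu F'(\vom) - \mu E = \mu F' + \delta(\vom) - \mu E > \mu F + \delta(\vom) - \mu E = \ovp + \delta(\vom) = \delta(\vc+\vom)$, giving a non-zero $h: E \to F'(\vom) \incl F(\vom)$; the composite with the inclusion is non-zero since the inclusion is a monomorphism, so again $\Hom(E, F(\vom)) \neq 0$, contradiction.

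**Where the difficulty lies.** The one point requiring care is the passage from "not semistable" to the existence of a \emph{vector bundle} sub- or quotient-object with the required slope inequality, and then checking that composing with the mono/epi keeps the Hom non-zero — in an abelian category composing a non-zero map with a monomorphism (on the target side) or with an epimorphism (on the source side) is automatically non-zero, so this is routine. The genuinely substantive input is entirely Proposition~\ref{prop:hom+ext}(i) together with the strict inequality $\delta(\vc+\vom) < \delta(\vc) = \ovp$, which is what makes the threshold argument go through with room to spare. I expect no real obstacle beyond bookkeeping of slopes; the whole corollary is a short deduction.

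\begin{proof}
Suppose, for contradiction, that $E$ is not semistable. Then $E$ has a quotient bundle $q\colon E\epi E''$ with $E''$ a non-zero vector bundle and $\mu E''<\mu E$. Since $\vom=(t-2)\vc-\sum_{i=1}^t\vx_i$ we have $\delta(\vom)=\ovp\bigl(t-2-\sum_{i=1}^t 1/p_i\bigr)<0$, hence $\delta(\vc+\vom)=\delta(\vc)+\delta(\vom)<\delta(\vc)=\ovp$. Therefore
$$
\mu F(\vom)-\mu E''=\mu F+\delta(\vom)-\mu E''>\mu E+\ovp+\delta(\vom)-\mu E=\delta(\vc+\vom),
$$
so by Proposition~\ref{prop:hom+ext}(i) there is a non-zero homomorphism $g\colon E''\to F(\vom)$. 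As $q$ is an epimorphism, the composite $g\circ q\colon E\to F(\vom)$ is non-zero, whence $\Hom(E,F(\vom))\neq0$. By Serre duality $\Ext^1(F,E)\cong\dual{\Hom(E,F(\vom))}\neq0$, contradicting the hypothesis. Hence $E$ is semistable.

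Now suppose $F$ is not semistable. Then $F$ has a subbundle $\iota\colon F'\incl F$ with $F'$ a non-zero vector bundle and $\mu F'>\mu F$. Consequently
$$
\mu F'(\vom)-\mu E=\mu F'+\delta(\vom)-\mu E>\mu F+\delta(\vom)-\mu E=\ovp+\delta(\vom)=\delta(\vc+\vom),
$$
so Proposition~\ref{prop:hom+ext}(i) yields a non-zero $h\colon E\to F'(\vom)$. Composing with the monomorphism $\iota(\vom)\colon F'(\vom)\incl F(\vom)$ gives a non-zero map $E\to F(\vom)$, so again $\Hom(E,F(\vom))\neq0$ and $\Ext^1(F,E)\neq0$, a contradiction. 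Hence $F$ is semistable as well.
\end{proof}
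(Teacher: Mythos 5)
Your proof is correct, and in substance it rests on the same engine as the paper's: Proposition~\ref{prop:hom+ext}. The paper's own argument is shorter and more direct, though. It takes an arbitrary subobject $F'\subseteq F$, notes that heredity of $\coh\XX$ makes $\Ext^1(F',E)$ a quotient of $\Ext^1(F,E)=0$, and then applies part (ii) of the proposition to get $\mu F'-\mu E\leq\ovp$, i.e.\ $\mu F'\leq\mu F$; the case of $E$ is dual. You instead argue by contradiction, invoke part (i) together with an explicit use of Serre duality, and transfer non-vanishing of $\Hom(E,F(\vom))$ along a monomorphism (for $F$) or an epimorphism (for $E$). Both routes work; the paper's buys brevity by letting heredity do the transfer to subobjects, while yours makes the Serre-duality mechanism behind Proposition~\ref{prop:hom+ext}(ii) explicit. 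Your reduction of ``not semistable'' to the existence of a destabilizing quotient \emph{bundle} of $E$ is fine (a destabilizing subobject has strictly smaller rank, so the torsion-free part of the quotient is non-zero and has slope $\leq$ that of the quotient).

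One sentence should be deleted: the claim that $\delta(\vom)=\ovp\bigl(t-2-\sum_i 1/p_i\bigr)<0$ is false in general. Indeed $\delta(\vom)=-\ovp\,\chi_\XX$, so it is negative only for $\chi_\XX>0$, vanishes in the tubular case, and is positive in the wild case --- and the corollary is used precisely also when $\chi_\XX<0$ (see the remark following it). Fortunately this assertion, and the ensuing ``$\delta(\vc+\vom)<\ovp$'', are not actually used anywhere in your displayed inequalities, which derive their strictness from $\mu E''<\mu E$ (resp.\ $\mu F'>\mu F$) alone; so the proof survives once the spurious justification is removed.
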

In particular, if $E$ and $F$ are indecomposable and $\chi_\XX<0$ then $E$ and $F$ are quasi-simple in their respective Auslander-Reiten components which have type $\ZZ\AA_\infty$.
\begin{proof}
  By symmetry it suffices to show that each non-zero subobject $F'$ of
  $F$ has a slope $\mu F'\leq \mu F$. Indeed, since the category
  $\coh\XX$ is hereditary, vanishing of $\Ext^1(F,E)$ implies that
  $\Ext^1(F',E)=0$. By Proposition~\ref{prop:hom+ext} thus $\mu F'-\mu
  E\leq \ovp$. This forces $\mu F'\leq \mu F$ and proves the
  semistability of $F$.
\end{proof}

\begin{proof}[Proof of Theorem~\ref{thm:max_width}]
  Let $T$ be a tilting bundle on $\XX$ with $\mu T_1\leq \ldots\leq
  \mu T_n$.  Since $0=\Ext^1(T_n,T_1)=\dual\Hom(T_1,T_n(\vom))$ we get
  by the preceding proposition that $\mu T_n(\vom)-\mu
  T_1\leq\ovp+\delta(\vom)$.  Since $\mu T_n(\vom)=\mu
  T_n+\delta(\vom)$ we conclude that $\mu T_n-\mu T_1\leq \ovp$,
  showing that the width $\width(T)$ is bounded by $\ovp$.

The bound $\ovp$ for the width is clearly attained for the canonical
tilting bundle since $0=\mu \Oo$ and $\mu\Oo(\vc)=\ovp$.

We now assume that $\chi_\XX\geq 0$ and that $T$ is a tilting bundle
on $\XX$ with width $\width(T)=\ovp$.
We set $F=T_n(-\vc)$ and observe
that $\mu F=\mu T_1$ and $[T_n]=[F]+\rk (F)\bw$ in $\Knull(\coh \XX)$.
Since $\Hom(T_n,T_1)=0$ by semistability and $\Ext^1(T_n,T_1)=0$ we
get $0=\gen{[T_n],[T_1]}=-\gen{[T_1],[T_n(\vom)]}$ and therefore
\begin{align}
\tag*{}
0=\gen{[T_1],[T_n(\vom)]}=&\gen{[T_1],[F(\vom)]}+\rk F\gen{[T_1],\bw}\\
\tag*{}
&=\gen{[T_1],[F(\vom)]}+\rk T_1\rk F\\
\label{eq:f_t_1}
&=-\gen{[F],[T_1]}+\rk T_1\rk T_n.
\end{align}
Since $T_1$ and $T_n$ have positive rank this implies that
$\Hom(F,T_1)\neq 0$. We will show that $F= T_1$ is a line bundle.  If
$\chi_\XX=0$, then $F$ and $T_1$ must lie in the same tube.
By exceptionality, they further have a quasi-length less than the rank of
the tube such that, in particular, $\dim \Hom(F,T_1)\leq
1$. Then~\eqref{eq:f_t_1} implies that $\Ext^1(F,T_1)=0$ and
$\dim\Hom(F,T_1)=1$,
thus $\rk T_1=\rk F=1$ and finally $F=T_1$.
If $\chi_\XX>0$, then $F=T_1$ follows  by stability since $F$ and $T_1$ have the
same slope and $\rk T_1=1$ follows again by~\eqref{eq:f_t_1}.

The argument also shows that $T_n$ is the unique indecomposable
summand of $T$ of maximal slope $\mu(T_n)$, and dually $T_1$ is the
unique indecomposable summand of $T$ having minimal slope.  By
semistability this implies $\Hom(T_i,T_1)=0=\Hom(T_n,T_i)$ for all
$1<i<n$ showing that $T_1$ is a source and $T_n$ a sink of the quiver
of $\End(T)$. We finally get that $T$ is the canonical tilting bundle,
up to a line bundle twist, by applying
Proposition~\ref{prop:semistable} whose proof is given below. This
will conclude the proof of Theorem~\ref{thm:max_width}.
\end{proof}

\begin{proof}[Proof of Proposition~\ref{prop:semistable}]
Assertion (i) is a special case of Corollary~\ref{cor:semistable}.

Concerning assertion (ii) let $L$ and $L'$ be line bundle summands of
$T$, corresponding to a sink (resp.\ a source) of the quiver of $A$
and satisfying the maximality property $\mu(L')-\mu(L)=\ovp$. Since
$L'$ and $L(\vc)$ have the same degree, we notice first that
$L'=L(\vc+\vx)$ for some $\vx$ of degree zero. Because
$0=\Ext^1(L',L)=\mathrm{D}\Hom(L,L'(\vom))$ we obtain
$\vc+\vom+\vx\leq \vc+ \vom$, hence $\vx\leq0$. Since $0\geq\vx$ and
$\vx$ has degree zero, we obtain $\vx=0$, implying that $L'=L(\vc)$.
Because of the maximality property $\mu(L(\vc))=\mu(L)+\ovp$, each
direct summand $T_i$ of $T$ satisfies $\mu(L)\leq \mu(T_i)\leq \mu
L(\vc)$ by Proposition~\ref{prop:hom+ext}.

By our assumption for $L$ (resp.\ $L(\vc)$) to correspond to a source
(resp.\ a sink) of $A$, we may assume that $L=T_1$ and $L(\vc)=T_n$.
Thus as in the proof of Theorem~\ref{thm:simples_max}
we have that
$S_1=T_1$ and $S_n=\tau T_n[1]=T_1(\vc+\vom)[1]$ and
$\rk(S_n)=-\rk(T_n)=-1$, where $S_1,\ldots,S_n$ denote the simple
$A$-modules corresponding to the indecomposable projective $A$-modules
$T_1,\ldots,T_n$.  Invoking Lemma~\ref{eq:Huebner(b)} we obtain
$$
\class{T_1}-\class{T_n} + \sum_{i=2}^{n-1}\rk S_i \class{T_i}=-\bw.
$$
Since $T_1$ is a line bundle, we have equality
$\class{T_1(\vc)}=\class{T_1}+\bw$ implying
$$
\sum_{i=2}^{n-1}\rk S_i \class{T_i}=0.
$$
Since the classes $\class{T_1},\ldots,\class{T_n}$ are linear
independent in $\Knull(\coh\XX)$, each simple $A$-module $S_i$ with
$i=2,\ldots,n-1$ has rank zero. Hence $A$ has the maximal possible
number of central simple modules. By Theorem~\ref{thm:simples_max} we
then conclude that $T=T_\can$ up to a line bundle twist.
\end{proof}

\subsection{An addendum: tubular width}\label{ssect:tubular_width}
For non-zero Euler characteristic the ``distance'' $|\mu Y-\mu X|$ of
a pair of objects is an invariant with respect to the autoequivalences
of $\Der(\coh\XX)$. This is no longer true for Euler characteristic
zero where the ``tubular distance'' given by the absolut value
of $$\rk X\rk Y(\mu Y-\mu X)=
\begin{vmatrix}
  \rk X & \rk Y\\
  \deg X&\deg Y
\end{vmatrix}=\DLF{X}{Y}$$ serves as a proper
replacement. Here
$$\DLF{X}{Y}=\sum_{j\in\ZZ_{\ovp}}\LF{X}{\tau^jY},\quad\ovp=\lcm(p_1,\dots,p_t)$$
is an average of the Euler form, and the above equality is
Riemann-Roch's theorem for a tubular weighted projective line $\XX$,
see~\cite{Lenzing:Meltzer:1993}. For instance, each autoequivalence
$\sigma$ of $\Der(\coh\XX)$, when applied to the canonical tilting
bundle $T_{\can}=\bigoplus_{0\leq\vx\leq\vc}\Oo(\vx)$, yields tubular
distance $$\DLF{\sigma\Oo}{\sigma\Oo(\vc)}=\ovp,$$ while $|\mu
(\sigma\Oo(\vc))-\mu (\sigma\Oo)|>0$ can get arbitrarily small, see  Theorem~\ref{thm:min_width}, (ii).

Assume $\XX$ is tubular, and $T$ is a multiplicity-free tilting sheaf whose
indecomposable summands $T_1,\dots,T_n$ have monotonically increasing
slope (with equality allowed). The question arises whether
$\DLF{T_1}{T_n}=\ovp$ characterizes $T_{\can}$ up to autoequivalence
of $\Der(\coh\XX)$. We note (without proof) that this is indeed the
case for tubular type $(2,2,2,2)$, but fails for the tubular weight triples
$(3,3,3)$, $(2,4,4)$ and $(2,3,6)$, as the following examples of
Coxeter-Dynkin algebras of canonical type show.

\def\tcddd{\xymatrix@R10pt@C20pt{ 
    &&\frac{0}{1}\\ 
    &\frac{0}{2}\ar[rd]\ar[ru]& \frac{0}{1}\\ 
    \frac{-1}{2}\ar[r]\ar[ru]\ar[rd]\ar@{..}@/_1.1pc/[rr]|{\spitz{2}}&
    \frac{0}{2}\ar[r] \ar[ru]&\frac{1}{1}\\ 
    &\frac{0}{2}\ar[ru]\ar[r]&\frac{0}{1} 
  }}

\def\tczds{\xymatrix@R10pt@C20pt{ 
    &\frac{0}{3}\ar[dr] & \frac{0}{2} & & &\\ 
    \frac{-1}{5}\ar[r]\ar[ru]\ar[rd]\ar@{..}@/_1.1pc/[rr]|{\spitz{2}}& \frac{0}{4} \ar[ur]\ar[r] &
    \frac{1}{1} & & &\\ 
    & \frac{0}{5}\ar[r]\ar[ur] & \frac{0}{4}\ar[r] & \frac{0}{3}\ar[r] &
    \frac{0}{2}\ar[r] & \frac{0}{1} 
  }}

\def\tczdsa{\xymatrix@R10pt@C20pt{ 
    &\frac{0}{3}\ar[dr] & \frac{0}{2} \\ 
    \frac{-1}{5}\ar[r]\ar[ru]\ar[rd]\ar@{..}@/_1.1pc/[rr]|{\spitz{2}}& \frac{0}{4} \ar[ur]\ar[r] &
    \frac{1}{1} \\ 
    & \frac{0}{5}\ar[d]\ar[ur]\\
    &\frac{0}{4}\ar[d]\\
    &\frac{0}{3}\ar[r] &\frac{0}{2}\ar[r] & \frac{0}{1} 
  }}

\def\tczvv{\xymatrix@R10pt@C20pt{ 
    &\frac{0}{2}\ar[dr] & \frac{0}{2}\ar[r]  & \frac{0}{1} \\ 
    \frac{-1}{3}\ar[r]\ar[ru]\ar[rd]\ar@{..}@/_1.1pc/[rr]|{\spitz{2}}& \frac{0}{3} \ar[ur]\ar[r] &
    \frac{1}{1} & \\ 
    & \frac{0}{3}\ar[ru]\ar[r] & \frac{0}{2}\ar[r] & \frac{0}{1} 
  }}

\begin{figure}[H]
$$
\begin{array}{ccc}
\tczdsa &\tcddd&\tczvv\\
(2,3,6)&(3,3,3)&(2,4,4)\\
 \end{array}
$$
\caption{Coxeter-Dynkin algebras with $\DLF{T_1}{T_n}=\ovp$}
\label{fig:tubular_counter}
\end{figure}
Note that these
algebras are Schurian and that the relations are given by equation
\eqref{eq:cox_dynkin}; moreover, they all have tubular width
$\DLF{T_1}{T_n}=\ovp$. Labels at vertices display the pair
(degree,rank) as `fractions'. We remark further that a Coxeter-Dynkin
algebra of type $(2,2,2,2)$ is isomorphic to the canonical algebra of
the same type, so it does not qualify as a (counter)-example in the
present context.
\section{Two instructive examples} \label{sect:instructive}
First we present two concealed-canonical algebras $A$ and $B$, one tubular, the other one wild, with interesting properties. We note that the quivers of $A$ and $B$ have a unique sink and a unique
source.

\begin{exa} \label{ex:instructiveI}
This example is the endomorphism ring of a tilting bundle $T$ on a
weighted projective line $\XX$ of tubular type $(3,3,3)$. Figure \ref{fig:ex0} shows a branch enlargement $A$ of
a canonical algebra of type $(2,3,3)$. The pair $(\deg E,\rk E)$ for each indecomposable
summand $E$ of $T$ is displayed in the figure below, and also later, as the (unreduced) fraction degree/rank.
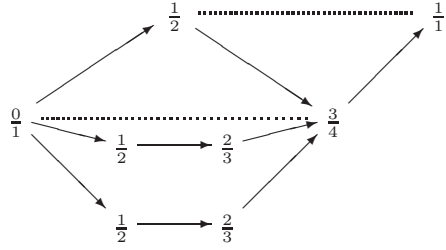
\begin{figure}[H]
\centering
\begin{picture}(175,90)
\put(5,45){
	\put(0,0){\HVCenter{\small $\frac{0}{1}$}}
	\put(40,-10){\HVCenter{\small $\frac{1}{2}$}}
	\put(40,-40){\HVCenter{\small $\frac{1}{2}$}}
	\put(80,-10){\HVCenter{\small $\frac{2}{3}$}}
	\put(80,-40){\HVCenter{\small $\frac{2}{3}$}}
	\put(60,40){\HVCenter{\small $\frac{1}{2}$}}
	\put(120,0){\HVCenter{\small $\frac{3}{4}$}}
	\put(160,40){\HVCenter{\small $\frac{1}{1}$}}
	\put(8,6){\vector(3,2){44}}
	\put(6,-1.5){\vector(4,-1){28}}
	\put(6,-6){\vector(1,-1){28}}
	\put(46,-10){\vector(1,0){28}}
	\put(46,-40){\vector(1,0){28}}
	\put(68,34){\vector(3,-2){44}}
	\put(86,-8.5){\vector(4,1){28}}
	\put(86,-34){\vector(1,1){28}}
	\put(126,6){\vector(1,1){28}}
	{\thicklines%
\qbezier[41](69.5,40)(110,40)(149.5,40)%
\qbezier[41](10,0)(50,0)(110,0)%
}%
}
\end{picture}
\caption{The algebra $A$ of tubular weight type $(3,3,3)$.}
\label{fig:ex0}
\end{figure}
We note that for each indecomposable summand $E$ of $T$ the degree-rank pair
$(\deg\,E, \rk E)$ is coprime. By~\cite{Lenzing:Meltzer:1993} this implies
that $E$ is quasi-simple in its tube which has (the maximal possible) $\tau$-period $3$.
This in turn implies that for any two indecomposable summands $T'$ and $T''$ of $T$
there exists a self-equivalence $u$ of the triangulated category $\Der(\coh\XX)$ sending
$T'$ to $T''$. To phrase it differently, the tilting bundle $T$ is homogeneous in
$\Der(\coh\XX)$. But $\End(T)$ is not a canonical algebra, implying by Theorem~\ref{thm:main0}
that there is no self-equivalence $v$ of $\Der(\coh\XX)$ such that $v(T)$ is a direct sum of
line bundles.
\end{exa}

\begin{exa}
  For weight type $(2,3,7)$ where $\chi_\XX<0$ there exists a tilting
  bundle $T=\bigoplus_{i=1}^{11} T_i$ whose indecomposable summands
  $T_i$ have rank and degree as shown in the next figure. Vertices are
  numbered [1] to [11], (unreduced) fractions $\frac{d}{r}$ represent
  the pair (degree,rank).  The quiver $Q$ and the (minimal) numbers of
  relations for the endomorphism algebra $B=\End(T)$ are displayed by
  Figure \ref{fig:ex2}.
\begin{figure}[H]
\centering
\unitlength 1.6pt 
\newcommand{\rel}[2]{{\color{white}\put(#1){\circle*{9}}}\put(#1){\HVCenter{\scriptsize$\langle #2\rangle$}}}
\newcommand{\labl}[1]{\tiny{[#1]}}
\begin{picture}(210,110)
\put(5,45){
	\put(0,0){\HVCenter{\small $\frac{0}{4}$\labl{1}}}
	\put(40,20){\HVCenter{\small $\frac{5}{26}$\labl{2}}}
	\put(60,-40){\HVCenter{\small $\frac{4}{31}$\labl{3}}}
	\put(80,0){\HVCenter{\small $\frac{18}{84}$\labl{4}}}
	\put(120,0){\HVCenter{\small $\frac{21}{85}$\labl{6}}}
	\put(100,35){\HVCenter{\small $\frac{2}{20}$\labl{5}}}
	\put(140,-40){\HVCenter{\small $\frac{6}{24}$\labl{7}}}
	\put(160,20){\HVCenter{\small $\frac{5}{14}$\labl{10}}}
	\put(150,-10){\HVCenter{\small $\frac{3}{19}$\labl{8}}}
	\put(140,60){\HVCenter{\small $\frac{13}{49}$\labl{9}}}
	\put(200,0){\HVCenter{\small $\frac{1}{1}$\labl{11}}}
	\put(6,3){\vector(2,1){28}}
	\put(6,-4){\vector(3,-2){48}}
	\multiput(46.6,18.2)(-1.2,-2.4){2}{\vector(2,-1){28}}
	\put(42,14){\vector(1,-3){16}}
	\put(63,-34){\vector(1,2){14}}
	\put(86,0){\vector(1,0){28}}
	\put(46,21.5){\vector(4,1){48}}
	\put(106,33.5){\vector(4,-1){48}}
	\put(126,3){\vector(2,1){28}}
	\put(152,-4){\vector(1,3){6}}
	\put(142,-34){\vector(1,3){6}}
	\put(123,-6){\vector(1,-2){14}}
	\put(122,6){\vector(1,3){16}}
	\multiput(141,53)(2,1){3}{\vector(1,-2){14}}
	\put(147,54){\vector(1,-1){48}}
	\put(146,-34){\vector(3,2){48}}
	\put(166,16){\vector(2,-1){28}}
	{\thicklines%
	\qbezier[31](70,-40)(100,-40)(130,-40)
	\qbezier[31](130,0)(160,0)(190,0)
	\qbezier[51](50,20)(100,20)(150,20)
	 \qbezier[46](48.57492926,8.85504245)(80,-25)(131.4250707,-36.85504245)
	 \qbezier[46](151.4250707,8.85504245)(120,-25)(69.07106781,-36.85504245)
	\qbezier[40](47,27)(80,55)(130,58)
	\qbezier[60](7,7)(50,62)(130,62)
	\rel{100,21}{7}
	\rel{115,-19}{2}
	}
}
\end{picture}
\caption{Algebra $B$ of weight type $(2,3,7)$.}
\label{fig:ex2}
\end{figure}

\noindent The most efficient way to construct tilting sheaves $T$ as
above is to apply H\"{u}bner reflections to the canonical configuration
$T_{\can}$, see Section~\ref{ssect:Huebner}. Here, one gets back from $T$  to $T_{\can}$, up to line bundle twist, by successive mutations in the following
vertices $6,4,9,8,7,8,3,5,1,2,10,5,9,10,7$, $3$, $1$, $10$, $8$, $2$, $9,4,9,7,6,4$.
Because we are dealing with $\leq 3$ weights, by \cite{Lenzing:Meltzer:2002} there exists, up to
isomorphism, a unique endomorphism algebra $B$ of a tilting bundle $T$
with the given quiver and number of relations.
\end{exa}

C.M.~Ringel has collected in \cite{Ringel:2009} an impressive list of properties distinguishing
canonical algebras within the class of tame concealed algebras, that is, the endomorphism rings of tilting bundles for a weighted projective line of Euler characteristic $\chi_\XX>0$.
A number of these properties relies on an inspection of the
Happel-Vossieck list classifying the tame concealed
algebras~\cite{Happel:Vossieck:1983}.

In addition to the characterizing properties from Theorem~\ref{thm:linebdle_max}, Theorem~\ref{thm:bij_max} and Theorem~\ref{thm:simples_max}, Ringel states in \cite{Ringel:2009} that for a tame concealed algebra $A$
(usually assumed to be not of type $(p,q)$) each condition
of the following list implies that $A$ is canonical:
\begin{enumerate}
\item
$A$ has only one source and one sink.
\item
$A$ is not Schurian.
\item
There exists a $2$-Kronecker pair $(X,Y)$ with $X$ simple in $\mod{A}$.
\item
There exists a $2$-Kronecker pair $(X,Y)$ with $Y$ simple in $\mod{A}$.
\item
There exists a one-parameter family of local modules.
\item
There are local modules with self extensions.
\item
There exists a one-parameter family of colocal modules.
\item
There are colocal modules with self extensions.
\item
There exists a projective indecomposable which is not thin.
\item
There exists a injective indecomposable which is not thin.
\end{enumerate}
Here, a pair $(X,Y)$ is called a
\emph{$2$-Kronecker pair}
if $X$, $Y$ are
exceptional, Hom-orthogonal, and
with an extension space $\Ext^1(Y,X)$ of dimension two. An $A$-module $X$ is called
\emph{local}, respectively \emph{colocal}, if it has a unique maximal
submodule (resp.\ a unique simple submodule). An $A$-module $X\neq0$
is called \emph{thin} if for each indecomposable projective $P$ the
space $\Hom_A(P,X)$ has dimension at most one.

As shown by our next result, characterizations of canonical algebras within the class of tame concealed algebras  have a tendency not to extend to the case of concealed-canonical algebra in general, the major exceptions to this rule being those characterizations treated in Section~\ref{sect:max_conditions}.

\begin{prop}
None of the conditions (1)--(10) yields a characterization for canonical
algebras in general.
\end{prop}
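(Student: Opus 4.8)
The plan is, for each of the ten properties in the list, to produce a concealed\nobreakdash-canonical algebra which is \emph{not} canonical yet satisfies that property. In fact a single algebra will refute all ten, namely the algebra $A$ of Example~\ref{ex:instructiveI}, supplemented for property~(1) by the wild example following it, so that one sees the failure is not a purely tubular phenomenon.

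Recall that $A$ is concealed-canonical, is not canonical, and has a quiver with a unique source and a unique sink; the last fact disposes of property~(1) at once (as does the wild example). For the remaining nine properties I would exploit that $A$ is a branch enlargement of the canonical algebra $\Lambda=\Lambda(2,3,3)$, so that $\Lambda$ occurs in $A$ as a convex subcategory. Extension by zero then yields a full, exact embedding $\mod\Lambda\incl\mod A$ which fixes dimension vectors and, for $\Lambda$-modules, identifies $\Ext^1_A$ with $\Ext^1_\Lambda$; since only the arrows of $A$ lying inside $\Lambda$ act nontrivially on a $\Lambda$-module, the embedding sends simple modules to simple modules, local modules to local modules, colocal modules to colocal modules, and exceptional objects to exceptional objects. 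Being a canonical algebra on three weights, $\Lambda$ is a tame concealed algebra not of type $(p,q)$, and as such it satisfies every one of the properties (1)--(10): for (1), (2), (9), (10) this is elementary, and for the rest it is part of Ringel's analysis in~\cite{Ringel:2009} (and in any case amounts to an explicit construction of the required modules). Transporting the relevant witness along the embedding — a non-Schurian morphism space, a $2$-Kronecker pair with a simple member in either of the two positions, a one-parameter family of local or of colocal modules, a local or a colocal module with self-extensions — shows that $A$ satisfies properties (2)--(8). Properties (9) and (10) follow directly: writing $a$ and $b$ for the source and the sink of the quiver of the core $\Lambda$, convexity gives $\dim_k e_a A e_b=\dim_k e_a \Lambda e_b=2$ (this is $\dim_k\Hom(\Oo,\Oo(\vc))$ on a weighted projective line with at least two weights), so the indecomposable projective $A$-module at $a$ and the indecomposable injective $A$-module at $b$ are not thin. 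Since $A$ is not canonical, no single property from the list characterizes canonical algebras inside the class of concealed-canonical algebras.

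The step that requires care is the transfer: one must know the branch enlargement is arranged so that the canonical core is genuinely a convex subcategory and that passing from $\Lambda$- to $A$-modules preserves tops and socles, so that ``local'' and ``colocal'' really survive; one also uses that branch enlargements of canonical algebras are again concealed-canonical. The substantive, non-formal input — concealed-canonical algebras which are not canonical but whose quiver has a unique source and a unique sink — is already supplied by the explicit examples, obtained from $T_{\can}$ by iterated H\"{u}bner reflections; granting those, the argument above is little more than bookkeeping.
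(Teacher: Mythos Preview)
Your approach—transporting witnesses from the convex canonical core $\Lambda=\Lambda(2,3,3)$ along the extension-by-zero embedding—is sound and genuinely different from the paper's. The paper treats the ten properties piecewise: it uses both $A$ and $B$ for (1), (2), (9), (10); it passes to the wild algebra $B$ for (3) and (4), exhibiting an explicit $2$-Kronecker pair there; and for (5)--(8) it writes down, in $A$, an explicit one-parameter family of local, colocal, self-extending modules (which, as your method predicts, is supported entirely on the $\Lambda$-core with zero at the branch vertex). Your argument replaces these ad hoc constructions by a single structural observation: since the branch vertex $c$ is a sink, $\Lambda=A/Ae_cA$, and for $\Lambda$-modules both $\Hom$ and $\Ext^1$ coincide in the two module categories, so every witness in $\mod\Lambda$ survives intact in $\mod A$. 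This is cleaner and also explains why the family in Figure~\ref{fig:local} has the shape it does. The paper's recourse to $B$ for (3)--(4) has the compensating advantage of showing the failure is not a purely tubular accident.

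Two points need correction. First, your identification of the non-thin modules is reversed. With the paper's conventions (right modules, $P_v\leftrightarrow T_v$ under $\Hom(T,-)$) one has $\dim\Hom_A(P_u,P_v)=\dim\Hom_\XX(T_u,T_v)$; hence it is the projective at the \emph{sink} $b$ of the core that fails to be thin (since $\Hom(P_a,P_b)\cong\Hom(\Oo,\Oo(\vc))$ is two-dimensional), while $P_a$ is simple projective and certainly thin. Dually, the non-thin injective sits at the \emph{source} $a$, not at $b$. Second, the aside that ``branch enlargements of canonical algebras are again concealed-canonical'' is not true in general—such enlargements are only \emph{almost} concealed-canonical (endomorphism rings of tilting sheaves rather than tilting bundles); what you actually use is that the particular algebra $A$ of Example~\ref{ex:instructiveI} is explicitly given as the endomorphism ring of a tilting bundle.
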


\begin{proof}
  Both algebras, $A$ and $B$, have only one source and only one sink
  and they are not Schurian, and they satisfy the conditions (1), (2),
  (9) and (10).  We now show that $B$ satisfies condition (3): Let
  $X=S_3$ be the simple associated to vertex $3$ and $Y$ be the
  $2$-dimensional indecomposable with top $S_1$ and socle $S_2$. Then
  $X$ and $Y$ are exceptional objects which are Hom-orthogonal with
  $\dim_k\Ext^1(X,Y)=2$.  For (4) repeat dualizing (3). For (5), (6),
  (7) and (8) we look at the $A$-modules given as representations in
  Figure~\ref{fig:local}. Note that the family is given by pairwise
  non-isomorphic indecomposables which are local and colocal and have
  self-extensions.\end{proof}

\begin{figure}[H]
\centering
\begin{picture}(175,95)
\put(5,50){
\put(0,0){\HVCenter{\small $k$}}
\put(40,-10){\HVCenter{\small $k$}}
\put(40,-40){\HVCenter{\small $k$}}
\put(80,-10){\HVCenter{\small $k$}}
\put(80,-40){\HVCenter{\small $k$}}
\put(60,40){\HVCenter{\small $k$}}
\put(120,0){\HVCenter{\small $k$}}
\put(160,40){\HVCenter{\small $0$}}
\multiput(7.072427502,3.813563849)(-0.832050294,1.248075442){2}{\line(3,2){46.68719529}}
\multiput(5.126524164,-6.187184335)(1.060660172,1.060660172){2}{\line(1,-1){28.6862915}}
\multiput(7.579238282,-2.667891875)(0.363803438,1.45521375){2}{\line(4,-1){24.47772}}
\multiput(113.7596228,5.061639291)(-0.832050294,-1.248075442){2}{\line(-3,2){46.68719529}}
\multiput(113.8128157,-5.126524164)(1.060660172,-1.060660172){2}{\line(-1,-1){28.6862915}}
\multiput(112.0569583,-1.212678125)(0.363803438,-1.45521375){2}{\line(-4,-1){24.47772}}
\put(46,-10){\vector(1,0){28}}
\put(46,-40){\vector(1,0){28}}
\put(126,6){\vector(1,1){28}}
{\thicklines%
\qbezier[41](69.5,40)(110,40)(149.5,40)%
\qbezier[41](10,0)(50,0)(110,0)%
}%
\put(60,-16){\HVCenter{\small $\lambda-1$}}
\put(60,-46){\HVCenter{\small $\lambda$}}
}%
\end{picture}
\caption{Distinct local, colocal and selfextending $A$-modules.}
\label{fig:local}
\end{figure}
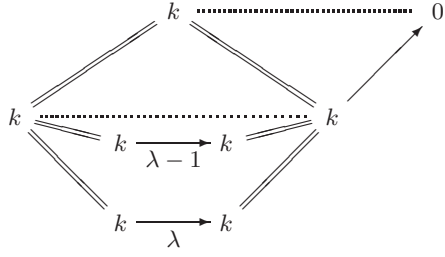

\section{Algebras antipodal to canonical} \label{sect:min_condition}
Instead of maximality properties, as studied in Section~\ref{sect:max_conditions}, we now investigate the corresponding minimality properties. We start with a couple of properties of general interest.
\subsection*{Useful generalities}
\begin{prop} \label{prop:vect+line}
Let $\XX$ be a weighted projective line,  $T$ be a tilting bundle and $L$ be a line bundle on $\XX$. Then either $\Hom(T,L)=0$ or $\Ext^1(T,L)=0$.
\end{prop}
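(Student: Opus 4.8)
The plan is to argue by contradiction: suppose both $\Hom(T,L)\neq0$ and $\Ext^1(T,L)\neq0$. Since $T=\bigoplus_{i=1}^n T_i$ is a tilting bundle, this means there are indecomposable summands $T_a$ and $T_b$ with $\Hom(T_a,L)\neq0$ and $\Ext^1(T_b,L)\neq0$. By Serre duality the latter gives $\Hom(L,T_b(\vom))\neq0$, so that $\mu T_b(\vom)\geq\mu L$ for slope reasons, i.e.\ $\mu T_b\geq\mu L-\delta(\vom)$; and from $\Hom(T_a,L)\neq0$ we get $\mu T_a\leq\mu L$. These slope constraints alone are not enough, so the real input has to come from Proposition~\ref{prop:hom+ext} together with the vanishing $\Ext^1(T,T)=0$: applying part~(ii) of that proposition to the pair $T_a,T_b$ (in the appropriate order) bounds $|\mu T_a-\mu T_b|$ by $\delta(\vc)$, hence by Theorem~\ref{thm:max_width} the width of $T$ is $\leq\ovp$. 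First I would combine these with the relation $\delta(\vc+\vom)=\delta(\vc)+\delta(\vom)$ to squeeze the slope of $L$ into a narrow interval, and then push for an actual contradiction with the homological characterization of $T$ being a tilting object.

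Alternatively — and this is the route I would actually pursue, as it seems cleaner — I would use the fact that for a tilting bundle $T$ the line bundle $L$, viewed in $\Der(\coh\XX)$, lies in the full subcategory $\mod A$ (with $A=\End T$) shifted suitably, because $\Hom(T,L[m])=0$ for $m\neq0,1$ automatically (as $\coh\XX$ is hereditary and both $T_i$, $L$ are sheaves). So the only two possible homological positions of $L$ relative to $T$ are: $L$ corresponds to an $A$-module (if $\Ext^1(T,L)=0$), or $L[1]$ does (if $\Hom(T,L)=0$); the claim is precisely that one of these two alternatives always holds. The key step is then to rule out the mixed situation where $\Hom(T,L)\neq0\neq\Ext^1(T,L)$, i.e.\ where $L$ is not contained in $\mod A$ under either identification.

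Here the decisive structural input is that $T$ induces a torsion pair $(\mathcal T,\mathcal F)$ in $\coh\XX$ with $\mathcal T=\{X:\Ext^1(T,X)=0\}$ and $\mathcal F=\{X:\Hom(T,X)=0\}$ (the tilting torsion pair), and every object of $\coh\XX$ sits in a short exact sequence $0\to tL\to L\to L/tL\to0$ with $tL\in\mathcal T$, $L/tL\in\mathcal F$. The point is that $L$, being a \emph{line bundle}, has no proper subobject of the same rank and no nonzero quotient of rank zero would interfere here: I would show that the torsion subobject $tL$ of a line bundle is either $0$ or all of $L$. Indeed $tL$ is a subsheaf of $L$, so either $tL=0$, or $tL$ has rank one and $L/tL$ has rank zero and lies in $\mathcal F$; but a finite-length sheaf $F$ with $\Hom(T,F)=0$ must vanish, since $T$ being a \emph{vector bundle} forces $\Ext^1(T,F)=0$ as well (by Serre duality $\Ext^1(T,F)=\dual\Hom(F,T(\vom))$ and $\Hom(F,-)$ into a bundle is zero for $F$ of finite length), and then $F=0$ by the generating property of the tilting bundle. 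Hence $L/tL=0$, i.e.\ $tL=L$ and $L\in\mathcal T$, which is exactly $\Ext^1(T,L)=0$; the remaining case $tL=0$ gives $L\in\mathcal F$, i.e.\ $\Hom(T,L)=0$.

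The main obstacle I anticipate is the clean verification that a line bundle cannot be a nontrivial extension across the tilting torsion pair — concretely, the step showing that the torsion quotient $L/tL$, a finite-length sheaf killed by $\Hom(T,-)$, is forced to be zero. Everything hinges on using that $T$ is a \emph{bundle} (so $\Hom(\text{finite length}, T(\vom))=0$) rather than merely a tilting sheaf; without that hypothesis the statement is false, and indeed the squid tilting sheaf $T_\squid$ shows a line bundle can straddle the torsion pair. So the proof must make essential use of $T\in\vect\XX$ exactly at this point, and I would make sure the argument is phrased to flag that.
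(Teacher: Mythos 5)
Your second argument (the torsion-pair route) is correct, but it is genuinely different from, and heavier than, the paper's proof. The paper argues in three lines: if $\Hom(T,L)\neq 0$ and $\Ext^1(T,L)\neq 0$, Serre duality turns the second condition into a non-zero morphism $v\colon L\to T(\vom)$, which is a monomorphism because $L$ is a line bundle and $T(\vom)$ a vector bundle; composing with a non-zero $u\colon T\to L$ gives $vu\neq 0$ in $\Hom(T,T(\vom))=\dual\Ext^1(T,T)=0$, a contradiction. You instead run $L$ through the canonical sequence $0\to tL\to L\to L/tL\to 0$ of the tilting torsion pair and kill the torsion-free quotient by observing that a finite-length sheaf $F$ with $\Hom(T,F)=0$ also has $\Ext^1(T,F)=\dual\Hom(F,T(\vom))=0$ and hence vanishes by the generating property. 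That case analysis is sound ($tL$ is a subsheaf of a line bundle, so it is $0$ or of rank one, and in the latter case $L/tL$ has finite length), and you correctly locate the single point where the hypothesis $T\in\vect\XX$ enters: the paper uses it to make $v$ injective, you use it to annihilate $\Hom(F,T(\vom))$; your remark that the squid shows the statement fails for general tilting sheaves is also accurate. The one thing you should not leave implicit is the existence of the torsion pair $(\mathcal{T},\mathcal{F})$ with $\mathcal{T}=\{X:\Ext^1(T,X)=0\}$ together with the canonical sequence for every sheaf: this is standard (Happel--Reiten--Smal{\o}; or argue directly by taking $tL$ to be the trace of $T$ in $L$, so that $tL\in\operatorname{Gen}(T)$ and heredity gives $\Ext^1(T,tL)=0$, whence $\Hom(T,L/tL)=0$), but it is a non-trivial piece of machinery that the paper nowhere develops, whereas the paper's composition argument needs only Serre duality and the definition of a tilting bundle. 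Your first, slope-based sketch should be discarded; as you note yourself, it does not close.
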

\begin{proof}
Assume that $\Hom(T,L)\neq0$ and $\Ext^1(T,L)\neq0$. Invoking Serre duality, we obtain non-zero morphisms $u:T\ra L$ and $v:L\ra T(\vom)$; moreover $v$ is a monomorphism since $T$ is a vector bundle. Thus $vu$ is non-zero in $\Hom(T,T(\vom))=\dual\Ext^1(T,T)=0$, contradicting that $T$ is tilting.
\end{proof}

The next result is due to T.~H\"{u}bner \cite{Huebner:Diplom}, see also \cite[Proposition~6.5]{Lenzing:Reiten:2006}. It will play a central role when investigating minimality properties for positive Euler characteristic.
\begin{prop}[H\"{u}bner] \label{prop:slope+slice} Let $\XX$ be a
  weighted projective line with $\chi_\XX>0$. Then the direct sum of
  (a representative system of) the indecomposable vector bundles $E$
  with slope in the range $0\leq \mu E <|\delta(\vom)|$ is a tilting
  bundle $T_\her$ whose endomorphism ring $A$ is hereditary. Moreover,
  the following holds:
\begin{enumerate}[\upshape (i)]
  \item If $t(\XX)=3$, then each indecomposable summand $E$ of $T_\her$
  has slope $0$ or $|\delta(\vom)|/2$. Correspondingly, each vertex in
  the quiver of $A$ is a sink or a source.

  \item If $\XX$ has weight type $(p_1,p_2)$, $1\leq p_1\leq p_2$ then
  $T_\her$ is the direct sum of all line bundles $\Oo(\vx)$ with
  degree in the range $0\leq \delta(\vx)\leq
  |\delta(\vom)|-1=\delta(\vx_1)+\delta(\vx_2)-1$. The quiver of $A$
  has bipartite orientation if and only if $p_1=p_2$.~\hfill$\square$
\end{enumerate}
\end{prop}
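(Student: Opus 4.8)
Here the plan is to reduce everything to the interplay of Serre duality with the slope, exploiting that for $\chi_\XX>0$ the category $\coh\XX$ is completely understood. First I would record two facts from the classification of the domestic case (\cite{Geigle:Lenzing:1987}, see also \cite{Lenzing:Reiten:2006}): every indecomposable vector bundle on $\XX$ is exceptional and hence semistable, and the indecomposable vector bundles fall into only finitely many orbits under the Auslander--Reiten translation $\tau$. Since $\vom=(t-2)\vc-\sum_i\vx_i$ and $\delta(\vc)=\ovp$ give $\delta(\vom)=-\ovp\,\chi_\XX$, the number $|\delta(\vom)|$ is exactly the slope gap imposed by Serre duality; and as $\mu(\tau E)=\mu(E)+\delta(\vom)$ by \eqref{eq:slope+shift}, the half-open window $[0,|\delta(\vom)|)$ is a fundamental domain for the $\tau$-action on slopes, so it meets each $\tau$-orbit in exactly one indecomposable bundle. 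Hence $T_\her$ is a well-defined, multiplicity-free vector bundle with finitely many indecomposable summands.

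\emph{Rigidity and the tilting property.} For indecomposable summands $E,F$ of $T_\her$ Serre duality gives $\Ext^1(E,F)\cong\dual\Hom(F,E(\vom))$, and $\mu(E(\vom))=\mu(E)+\delta(\vom)<0\le\mu(F)$ because $\mu(E)<|\delta(\vom)|=-\delta(\vom)$ and $\mu(F)\ge0$; as $F$ and $E(\vom)$ are semistable with $\mu(F)>\mu(E(\vom))$, the Hom-space vanishes, whence $\Ext^1(T_\her,T_\her)=0$. To deduce that $T_\her$ is a tilting bundle I would invoke that $T_\her$ generates $\coh\XX$; this is the substantial content of H\"ubner's work \cite{Huebner:1996}, where one translates an arbitrary indecomposable bundle by a suitable power of $\tau$ into the window and uses positivity of global sections of sufficiently positive bundles. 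A rigid vector bundle generating $\coh\XX$ is automatically tilting (equivalently: a rigid object in the hereditary category $\coh\XX$ with $n=\rk\Knull(\coh\XX)$ non-isomorphic indecomposable summands is a tilting object), and $T_\her$ then has exactly $n$ indecomposable summands. This step --- that $T_\her$ really is a tilting bundle, equivalently that there are precisely $n$ orbits of indecomposable bundles under $\tau$ --- is the one that genuinely needs the fine structure of $\vect\XX$ in the domestic case; everything else is bookkeeping with slopes and Serre duality.

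\emph{Heredity of $A=\End(T_\her)$.} Since $\coh\XX$ is hereditary we have $\gldim A\le2$, and $A$ is hereditary iff $\Ext^2_A(S_i,S_j)=0$ for all simple $A$-modules. Using the identification of $\mod A$ with the full subcategory of $\Der(\coh\XX)$ of objects with cohomology concentrated in degree $0$, each $S_i$ lies in $\coh\XX$ (formal source) or in $\coh\XX[1]$ (formal sink); a short computation in $\Der(\coh\XX)$, together with heredity of $\coh\XX$, shows that $\Ext^2_A(S_i,S_j)$ can be non-zero only when $i$ is a formal sink and $j$ a formal source, in which case $\Ext^2_A(S_i,S_j)\cong\Ext^1_{\coh\XX}(T_i^\ast,S_j)\cong\dual\Hom_{\coh\XX}(S_j,T_i^\ast(\vom))$, with $T_i^\ast$ the kernel appearing in the H\"ubner reflection \eqref{eq:Huebner:reflection} at $i$. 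Now, twisting that reflection sequence with $\vom$, $T_i^\ast(\vom)$ embeds into a direct sum of twists $T_l(\vom)$ of summands of $T_\her$, all of slope $<0$; on the other hand $S_j$ is a simple $A$-module, hence exceptional (so semistable) or a finite-length sheaf, and $\Hom(T_\her,S_j)\ne0$ forces $\mu(S_j)\ge\mu(T_l)\ge0$ for some summand $T_l$. By semistability $\Hom(S_j,T_i^\ast(\vom))=0$, hence $\Ext^2_A(S_i,S_j)=0$ and $A$ is hereditary.

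\emph{The cases $t(\XX)=3$ and $t(\XX)=2$.} For $t(\XX)=2$ every indecomposable bundle is a line bundle $\Ocr(\vx)$ with $\mu(\Ocr(\vx))=\delta(\vx)\in\ZZ$, and $|\delta(\vom)|=\delta(\vx_1)+\delta(\vx_2)$; thus $T_\her$ is the direct sum of the $\Ocr(\vx)$ with $0\le\delta(\vx)\le|\delta(\vom)|-1$. Its irreducible maps are the $\Ocr(\vx)\to\Ocr(\vy)$ with $\vy-\vx\in\{\vx_1,\vx_2\}$, which raise $\delta$ by $\delta(\vx_1)$ or $\delta(\vx_2)$; the orientation is bipartite exactly when these two increments coincide, that is, when $p_1=p_2$. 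For $t(\XX)=3$ one computes $|\delta(\vom)|=1$ in each of the finitely many admissible weight types, and the (explicitly known) indecomposable bundles of slope in $[0,1)$ have slope $0$ or $\tfrac12$; since on a domestic weighted projective line exceptional sheaves are stable (or by direct inspection of the finitely many cases), distinct indecomposable bundles of equal slope are Hom-orthogonal, and a non-zero map between semistable bundles cannot lower the slope, so every arrow of the quiver of $A$ runs from a slope-$0$ summand to a slope-$\tfrac12$ summand --- whence each vertex is a source (slope $0$) or a sink (slope $\tfrac12$). This completes the plan.
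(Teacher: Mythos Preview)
The paper does not give a proof of this proposition; it cites H\"{u}bner's diploma thesis and \cite[Prop.~6.5]{Lenzing:Reiten:2006} and places a $\square$ after the statement. So there is nothing to compare against directly, and the real question is whether your argument stands on its own. The rigidity part (slope window plus semistability plus Serre duality) is fine, and so is the observation that $[0,|\delta(\vom)|)$ is a $\tau$-fundamental domain on slopes. The generation/tilting part you reasonably defer to H\"{u}bner.

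There are, however, two genuine gaps. First, in your heredity argument you write $\Ext^2_A(S_i,S_j)\cong\Ext^1_{\coh\XX}(T_i^\ast,S_j)$ with $T_i^\ast$ the H\"{u}bner reflection of $T_i$. This identification is not correct: if $S_i=V_i[1]$ with $V_i\in\coh\XX$, then $\Ext^2_A(S_i,S_j)\cong\Ext^1_{\coh\XX}(V_i,S_j)$, but $V_i$ is characterized by $\Hom(T,V_i)=0$ and $\Ext^1(T,V_i)=k$, whereas $T_i^\ast$ typically receives nonzero morphisms from other summands of $T$ (for instance, at the sink $\Oo(\vc)$ of $T_{\can}$ with $t\ge3$ one checks $\dim\Hom(\Oo,T_i^\ast)=t-2>0$). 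Your subsequent slope estimate therefore applies to the wrong object. The clean route to heredity in the domestic case is the structural one: $\vect\XX$ has Auslander--Reiten quiver $\ZZ\Delta$ for the corresponding Dynkin diagram $\Delta$, and $T_{\her}$ is a slice therein, whence $\End(T_{\her})\cong k\Delta$ for some orientation.

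Second, in part (i) the assertion ``$|\delta(\vom)|=1$ in each of the finitely many admissible weight types'' is false: for weight type $(2,2,n)$ with $n$ odd one has $\ovp=2n$ and $\delta(\vom)=-2$. More importantly, you do not explain \emph{why} the slopes in the window are only $0$ and $|\delta(\vom)|/2$; saying the bundles are ``explicitly known'' defers precisely the point that needs arguing. This bipartition of slopes is again a consequence of the $\ZZ\Delta$-structure of the Auslander--Reiten quiver of $\vect\XX$ for the triple-weight domestic types (equivalently, of the fact that the underlying Dynkin graph $\Delta\in\{D_n,E_6,E_7,E_8\}$ is bipartite and that irreducible maps in $\vect\XX$ raise the slope by exactly $|\delta(\vom)|/2$), and it simultaneously yields both the slope dichotomy and the heredity (bipartite orientation) claimed in~(i).
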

The next result is a reformulation of a result by Kerner and
Skowro{\'n}ski \cite[Theorem~3]{Kerner:Skowronski:2001}.
\begin{thm}[Kerner-Skowro{\'n}ski]\label{thm:Kerner:Skow}
  Let $\XX$ be a weighted projective line of negative Euler
  characteristic. Further let $m$ be a positive integer. Then there
  exists a tilting bundle $T$ on $\XX$ such that for each
  indecomposable summand $T_i$ of $T$ and each simple sheaf $S$ on
  $\XX$ the space $\Hom(T_i,S)$ has dimension $\geq m$. In particular,
  each $T_i$ has rank $\geq m$.~\hfill\qed
\end{thm}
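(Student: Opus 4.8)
The plan is to split the statement into an elementary homological reduction, which is straightforward, and an existence assertion about tilting bundles which is the substance of \cite[Theorem~3]{Kerner:Skowronski:2001}. For the reduction, note first that for a vector bundle $E$ and a simple sheaf $S$ Serre duality gives $\Ext^1(E,S)=\dual\Hom(S,E(\vom))=0$, since a finite length sheaf admits no non-zero morphism into a vector bundle; hence $\dim_k\Hom(E,S)=\langle[E],[S]\rangle$ depends only on the class $[E]$ in $\Knull(\coh\XX)$. The simple sheaves on $\XX$ split into the finitely many \emph{exceptional} ones $S_{j,l}$ (with $1\le j\le t$, $1\le l\le p_j$, lying in the tube at $\lambda_j$) together with the homogeneous ones, which all carry the single class $\bw$ with $\langle[E],\bw\rangle=\rk E$. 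Therefore the requirement ``$\dim_k\Hom(T_i,S)\ge m$ for every indecomposable summand $T_i$ and every simple sheaf $S$'' is equivalent to the finite linear system $\rk T_i\ge m$ and $\langle[T_i],[S_{j,l}]\rangle\ge m$ (for all $i,j,l$), and ``in particular $\rk T_i\ge m$'' is then immediate. So the task is to produce a multiplicity-free tilting bundle all of whose summand-classes lie deep inside the rational polyhedral cone $\mathcal{C}_m=\{x\in\Knull(\coh\XX)\mid\rk x\ge m,\ \langle x,[S_{j,l}]\rangle\ge m\text{ for all }j,l\}$. This cone is full-dimensional for every $m$: already $[T_{\can}]$ satisfies $\langle[T_{\can}],[S_{j,l}]\rangle\ge 1$ for all $j,l$, because each of the $p_j$ simples in the tube at $\lambda_j$ receives a non-zero map from exactly one of the summands $\Oo(a\vx_j)$, $0\le a<p_j$, of $T_{\can}$, so large multiples of $[T_{\can}]$ lie in the interior of $\mathcal{C}_m$.

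For the existence I would start from the canonical tilting bundle and drive it into $\mathcal{C}_m$ by a long sequence of Hübner reflections (Section~\ref{ssect:Huebner}), chosen at each step among those that keep $T$ a tilting bundle -- this is possible starting from $T_{\can}$, as the constructions of Section~\ref{sect:instructive} illustrate. Reflecting at a formal source $T_i$ replaces its class by $\sum_j\kappa_j[T_j]-[T_i]$, a positive combination of the neighbouring classes minus the old one, by the defining sequence \eqref{eq:Huebner:reflection} (or its dual); evaluated on a functional $\langle-,[S_{j,l}]\rangle$ that is already large on the neighbours, this value grows. What makes the classes escape to infinity inside the positive cone rather than merely oscillate is $\chi_\XX<0$: the Coxeter transformation of $\coh\XX$ then has spectral radius strictly larger than one, so generic mutation orbits leave every bounded region. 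One would schedule the reflections -- alternately creating a new source and a new sink, in the spirit of Example~\ref{ex:instructiveI} and of the weight type $(2,3,7)$ construction in Section~\ref{sect:instructive} -- so that after finitely many steps each of the $n$ summands has been pushed past the bound $m$ in each of the $\sum_j p_j$ directions $[S_{j,l}]$ simultaneously; by the first paragraph the resulting $T$ then satisfies $\dim_k\Hom(T_i,S)\ge m$ for all $i$ and all $S$.

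The hard part will be exactly this \emph{uniform} control. A Hübner reflection is a local move -- it improves one summand while possibly worsening its neighbours -- so ``the ranks tend to infinity'' does not by itself suffice; one must rule out that the mutation orbit forever hugs one of the hyperplanes $\langle-,[S_{j,l}]\rangle=0$, which would keep some $\Hom(T_i,S_{j,l})$ bounded even as ranks grow, and indeed a ``generic'' sequence of reflections of $T_{\can}$ typically retains a rank-one summand (as the weight type $(2,3,7)$ example shows). Quantifying how the classes of all summands move under reflections, and producing a schedule that sweeps every summand well into the interior of $\mathcal{C}_m$, is the technical core; this is precisely what the construction behind \cite[Theorem~3]{Kerner:Skowronski:2001} accomplishes, and combining it with the reduction of the first paragraph yields the theorem in the stated form. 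Negativity of the Euler characteristic is used essentially here: for $\chi_\XX=0$ mutation orbits stay bounded and the analogous statement is false.
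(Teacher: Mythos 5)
The paper does not actually prove this theorem: it is imported as a ``reformulation'' of \cite[Theorem~3]{Kerner:Skowronski:2001}, which is why the statement carries a \emph{qed} symbol with no proof following it. Your proposal is, in substance, the same thing -- you explicitly defer the technical core (the uniform control of all the functionals $\langle -,[S_{j,l}]\rangle$ along a mutation sequence) to that same reference -- so there is no divergence on the essential point. Your preliminary reduction is correct and is a nice addition: since $\Ext^1(E,S)=\dual\Hom(S,E(\vom))=0$ for $E$ a vector bundle and $S$ simple, indeed $\dim_k\Hom(T_i,S)=\langle[T_i],[S]\rangle$, the homogeneous simples contribute exactly the condition $\rk T_i\ge m$, and the whole hypothesis becomes finitely many linear inequalities on the classes $[T_i]$; this cleanly explains the ``in particular'' clause. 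The H\"{u}bner-reflection sketch is honestly flagged as incomplete and is not needed once you invoke the citation. One genuine error, though it lies outside the statement being proved: your closing assertion that for $\chi_\XX=0$ ``mutation orbits stay bounded and the analogous statement is false'' contradicts the paper itself. Proposition~\ref{prop:width_tubular}(b) constructs, for tubular $\YY$, tilting bundles $T(n)=\rho^{n\bp}T$ with $\dim\Hom(T_i(n),S)\ge n$ for \emph{every} simple sheaf $S$ and $\rk T_i(n)>\bp\,n$; tubular mutations move K-theory classes unboundedly even though the Coxeter transformation is periodic on $\Knull(\coh\YY)$ in that case. So negativity of the Euler characteristic is not what makes the conclusion true -- it is merely the hypothesis under which Kerner--Skowro\'{n}ski's particular construction is carried out.
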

For a related but different result we refer to Section~\ref{ssect:min_width}.
\subsection*{Minimal number of line bundle summands}
In this section we investigate the number of non-isomorphic line bundle summands of a tilting bundle $T$ on $\XX$. Note that the index $[\LL:\ZZ\vom]$ equals the number of Auslander-Reiten orbits of line bundles. It is not difficult to see that $\pm[\LL:\ZZ\vom]=p_1\ldots p_t\chi_\XX$, where the number on the right hand side is known as the \emph{Gorenstein invariant} or \emph{Gorenstein parameter} of the $\LL$-graded coordinate algebra $S=S(\pp,\lala)$ of $\XX$. For positive Euler characteristic, we obtain the following values for $[\LL:\ZZ\vom]$:
\begin{center}
\begin{tabular}{|c|c|c|c|c|c|}\hline
weight type & $(p_1,p_2)$ & $(2,2,n)$ & $(2,3,3)$ & $(2,3,4)$ & $(2,3,5)$\\\hline
$[\LL:\ZZ\vom]$& $p_1+p_2$& $4$& $3$& $2$& $1$\\ \hline
\end{tabular}
\end{center}

\begin{prop}\label{prop:min_number_lbundle}
  $\mathrm{(i)}$ Assume $\chi_\XX>0$. Then each tilting bundle $T$ on $\XX$
  contains at least one member from each Auslander-Reiten orbit of
  line bundles.  In particular, $T$ contains at least $[\LL:\ZZ\vom]$
  non-isomorphic line bundles. This minimal value is attained if
  $\End(T)$ is hereditary.

$\mathrm{(ii)}$ Assume $\chi_\XX\leq0$. Then there exist a tilting bundle on $\XX$ without a line bundle summand.
\end{prop}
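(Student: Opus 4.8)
The plan is to handle the two parts separately, since they concern opposite extremes. For part~(i), assume $\chi_\XX>0$ and let $T$ be a tilting bundle on $\XX$. The key observation is that line bundles on $\XX$ form a single $\LL$-orbit under the Picard group, and their Auslander--Reiten orbits are exactly the cosets of $\ZZ\vom$ in $\LL$; there are $[\LL:\ZZ\vom]$ of these. I would argue that $T$ must contain a representative of \emph{every} such AR-orbit. Suppose, for contradiction, that for some line bundle $L$ no summand of $T$ is isomorphic to a twist $\tau^j L = L(j\vom)$ for any $j\in\ZZ$. Using that $\chi_\XX>0$ forces $\delta(\vom)<0$, so slopes $\mu(L(j\vom)) = \mu(L) + j\,\delta(\vom)$ sweep out all of $\QQ$ as $j$ ranges over $\ZZ$. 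Pick $j$ so that $\mu(L(j\vom))$ is strictly larger than every $\mu(T_i)$; then by semistability/Proposition~\ref{prop:hom+ext}(i) combined with the exceptionality of the summands, one shows $\Hom(T, L(j\vom)) = 0$, and dually, choosing $j$ with slope much smaller than all summands, $\Ext^1(T, L(j\vom)) = 0$. Alternatively, the cleanest route: by Proposition~\ref{prop:vect+line}, for the line bundle $L' = L(j\vom)$ we have $\Hom(T,L')=0$ or $\Ext^1(T,L')=0$; if $T$ omits the whole $\tau$-orbit of $L$, iterating the slope argument over all $j$ produces an $L'$ in the orbit that is homologically orthogonal to $T$, contradicting that $T$ is tilting. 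Once every AR-orbit is hit, $T$ contains at least $[\LL:\ZZ\vom]$ non-isomorphic line bundles. That this bound is sharp when $\End(T)$ is hereditary follows directly from Proposition~\ref{prop:slope+slice}: for $t(\XX)=3$ the hereditary tilting bundle $T_\her$ has all summands of slope $0$ or $|\delta(\vom)|/2$, and one checks these slopes account for exactly $[\LL:\ZZ\vom]$ line bundles; the two-weight case is explicit in part~(ii) of that proposition.

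For part~(ii), assume $\chi_\XX\le0$; I must exhibit a tilting bundle with \emph{no} line bundle summand. When $\chi_\XX<0$ this is immediate from Theorem~\ref{thm:Kerner:Skow} (Kerner--Skowro\'nski): taking $m=2$ there produces a tilting bundle all of whose indecomposable summands have rank $\ge 2$, hence none is a line bundle. The remaining case is $\chi_\XX=0$, i.e.\ $\XX$ tubular. Here the idea is to start from $T_\can = \bigoplus_{0\le\vx\le\vc}\Oo(\vx)$, all of whose summands are line bundles (rank one), and apply a nontrivial autoequivalence of $\Der(\coh\XX)$ — concretely, a tubular mutation $\rho$ as in Section~\ref{ssect:tub_mut}. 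Since $\rho$ acts on $(\deg,\rk)$ by right multiplication with $\left(\begin{smallmatrix}1&1\\0&1\end{smallmatrix}\right)$, it preserves degree but strictly increases rank on every object of positive slope; more to the point, $\rho$ sends rank-one objects to objects of rank $\ge 2$ unless the object has slope $0$ or $\infty$. The structure sheaf $\Oo$ has slope $0$, so one application of $\rho$ is not quite enough — but $\rho(T_\can)$ is again a tilting bundle (as $\rho$ is a triangle self-equivalence carrying $\coh\XX$-tilting bundles to $\coh\XX$-tilting bundles, up to checking it lands in $\coh\XX$ rather than a shift), and composing $\rho$ with a suitable line-bundle twist $\sigma$ (which shifts slopes by $1$) moves \emph{all} summands off slopes $0$ and $\infty$ before applying $\rho$ once more. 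After such a composite $\rho\circ\sigma$, every summand of the image has rank $\ge 2$, so the resulting tilting bundle has no line bundle summand.

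The main obstacle I anticipate is in part~(ii), tubular case: ensuring that the self-equivalence one applies to $T_\can$ actually produces a tilting \emph{bundle} (an object of $\coh\XX$, all summands vector bundles) rather than a complex with components in several degrees, and verifying that all summands genuinely acquire rank $\ge 2$ — this requires tracking slopes carefully, since a summand whose slope is sent to $0$ or $\infty$ by the chosen autoequivalence would remain a line bundle. The fix is to choose the autoequivalence so that the finite set of slopes $\{\mu(\Oo(\vx)) : 0\le\vx\le\vc\}$ — which, since $\Oo(\vx)$ all have rank one, equals $\{\delta(\vx) : 0\le\vx\le\vc\}\subset\ZZ$ — is moved by the induced fractional linear action entirely into the region where $\rho$ strictly increases rank; concretely, twist first so that all these slopes become positive and bounded away from $0$ and $\infty$, then apply $\rho$. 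A secondary, purely bookkeeping point is the sharpness claim in part~(i) for weight types $(2,2,n)$, $(2,3,3)$, $(2,3,4)$, $(2,3,5)$: one reads off from Proposition~\ref{prop:slope+slice} exactly which line bundles occur in $T_\her$ and checks the count matches the table, which is routine.
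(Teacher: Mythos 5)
Part (ii) of your proposal is correct and matches the paper's argument (twist $T_\can$ so that all summands have positive degree, apply $\rho$, and use Kerner--Skowro\'nski for $\chi_\XX<0$). The problem is in part (i), where your argument has a genuine gap. Your plan is to derive a contradiction by producing, in an omitted $\tau$-orbit, a line bundle $L'$ with $\Hom(T,L')=0=\Ext^1(T,L')$. But such an $L'$ can \emph{never} exist when $T$ is tilting --- that is exactly the generation property of a tilting sheaf --- so the contradiction you aim for is unattainable independently of whether the orbit meets $\add T$, and the argument proves nothing. Indeed, by Proposition~\ref{prop:vect+line} together with the tilting property, for every line bundle $L'$ \emph{exactly one} of $\Hom(T,L')$, $\Ext^1(T,L')$ is nonzero; the two vanishing conditions you want occur in disjoint slope regimes and cannot be combined. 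On top of this, your slope estimates are reversed: Proposition~\ref{prop:hom+ext}(i) says that $\Hom(E,F)\neq0$ once $\mu F-\mu E$ is \emph{large}, so taking $\mu(L(j\vom))$ strictly larger than all $\mu(T_i)$ tends to make $\Hom(T,L(j\vom))$ nonzero, not zero.

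The correct conclusion to aim for is not right-orthogonality but extension-freeness in \emph{both} directions. Since $\delta(\vom)<0$, in each orbit $\{L_0(n\vom)\}_{n\in\ZZ}$ one can choose $n$ maximal with $\Hom(T,L_0(n\vom))\neq0$; setting $L=L_0(n\vom)$, the condition $\Hom(T,L(\vom))=0$ gives $\Ext^1(L,T)=\dual\Hom(T,L(\vom))=0$ by Serre duality, while $\Hom(T,L)\neq0$ forces $\Ext^1(T,L)=0$ by Proposition~\ref{prop:vect+line}. Hence $T\oplus L$ has no self-extensions, and maximality of tilting bundles among extension-free bundles yields $L\in\add T$. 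This is the step your proposal is missing. A further minor point: the sharpness claim concerns \emph{every} tilting bundle with hereditary endomorphism ring, not only $T_\her$; the paper handles the general case by observing that such tilting bundles form a slice in the Auslander--Reiten quiver of $\vect\XX$, so they still meet each $\tau$-orbit of line bundles exactly once.
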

Note that the converse of the last statement of assertion (i) is not true. For weight type $(2,3,5)$ there exists a tilting bundle $T$ with endomorphism ring and rank distribution as follows:
$$
\xymatrix@C10pt@R10pt{
&&&&&3\ar[dl]\ar[dr]\ar@{..}[dd]&&\\
1\ar[r]&2\ar[r]&3\ar[r]&4\ar[r]&5\ar[dr]&&4\ar[dl]&2\ar[l]\\
&&&&&3&&\\
}
$$

\begin{proof}[Proof of the proposition]
We first assume $\chi_\XX>0$. Given a line bundle $L_0$, we choose a line bundle $L=L_0(n\vom)$, $n\in\ZZ$, such that
$$
\text{(a) } \Hom(T,L)\neq0,\text{ and}\quad \text{(b)} \Hom(T,L(\vom))=0.
$$
This choice is possible since $\delta(\vom)<0$. Now, (b) expresses
that $\Ext^1(L,T)=0$, while (a) implies in view of
Proposition~\ref{prop:vect+line} that $\Ext^1(T,L)=0$. Altogether,
$T\oplus L$ has no self-extensions, implying that $L$ is a direct
summand of $T$, since $T$ is tilting. This shows the first claim of
assertion (i). Further the tilting bundle $T_\her$ of
Proposition~\ref{prop:slope+slice} contains exactly one member from
each Auslander-Reiten orbit of an indecomposable vector bundle, hence
in particular, $T_\her$ contains exactly $[\LL:\ZZ\vom]$
non-isomorphic line bundle summands. Since tilting bundles $T$ with
hereditary endomorphism ring form a slice in the Auslander-Reiten
quiver of $\vect\XX$, the same argument applies in this case.

We now assume $\chi_\XX=0$.  By ~\cite{Lenzing:Meltzer:2000}
there exists an autoequivalence $\rho$ of $\Der (\coh\XX)$ such that
the induced map on pairs $(\deg X,\rk X)^t$ is given by left multiplication with the matrix
$$
\left(
\begin{array}{cc}
1& 0\\
1&1
\end{array}
\right).
$$ Let $T=\rho (T_{\can}(\vu))$ and
$A=\End(T)$ where $\vu$ has degree one. Note that $A$ is a canonical algebra; moreover the degree/rank distribution for the indecomposable summands of $T_\can(\vu)$ along the $i$th arm of the quiver of the canonical algebra $A$ is given by
$$
\frac{1}{1}\ra \frac{1+\ovp/p_i}{1}\ra \frac{1+2\ovp/p_i}{1}\ra \cdots \ra \frac{1+\ovp}{1}.
$$
Applying $\rho$ we obtain the corresponding degree/rank distribution for the indecomposables of the $i$th arm of $T$ as
$$
 \frac{1}{2}\ra \frac{1+\ovp/p_i}{2+\ovp/p_i}\ra \frac{1+2\ovp/p_i}{2+2\ovp/p_i}\ra \cdots \ra \frac{1+\ovp}{2+\ovp}.
$$
It follows that all ranks for the indecomposables in the $i$th arm of $T$ have rank $\geq2$, so that the claim follows.

Finally assume that $\chi_\XX<0$. Then the claim follows from Theorem~\ref{thm:Kerner:Skow} or Theorem~\ref{thm:min_width}.
\end{proof}

\subsection*{Minimal number of bijections}\label{ssect:min_bij}

\subsubsection*{Positive Euler characteristic}
Here the following cases arise:

(a) Assume weight type $(2,3,p)$ with $p=3,4,5$ and consider the
tilting bundle $T_\her$ from Proposition~\ref{prop:slope+slice}. For
each arrow from the quiver $Q$ of $\End(T)$ the source and sink have
different rank. Accordingly there are no arrows $u\ra v$ inducing a
bijection $G_v\ra G_u$ for the generic $\End(T)$-module $G$.

(b) Assume weight type $(2,2,p)$ with
$p\geq2$. Invoking~\cite{Happel:Vossieck:1983} it follows that $p-2$
is the minimal number of arrows inducing a bijection. This number is
attained for the tilting bundle $T_\her$ from
Proposition~\ref{prop:slope+slice}.

(c) Assume weight type $(p_1,p_2)$ with $1\leq p_1\leq p_2$, and let
$T$ be any tilting bundle. Then the quiver $Q$ of $\End(T)$ has
$n=p_1+p_2$ vertices and also $n$ arrows. Since all indecomposable
summands of $T$ have rank one, each arrow $u\ra v$ of $Q$ induces a
bijection $G_v\ra G_u$.
\subsubsection*{Euler characteristic zero}
We have shown in the proof of Proposition~\ref{prop:min_number_lbundle} that there exists a tilting bundle $T$ whose endomorphism ring is the canonical algebra and such that the degree/rank distribution in the $i$th arm is given by
$$
 \frac{1}{2}\ra \frac{1+\ovp/p_i}{2+\ovp/p_i}\ra \frac{1+2\ovp/p_i}{2+2\ovp/p_i}\ra \cdots \ra \frac{1+\ovp}{2+\ovp}.
$$
It follows that all ranks for the indecomposables in the $i$th arm are pairwise distinct. Hence no arrow $u\ra v$ induces a bijection $G_v\ra G_u$ for the $T$-distinguished generic $\End(T)$-module $G$.
\subsubsection*{Negative Euler characteristic}
For the minimal wild types $(3,3,4)$, $(2,4,5)$ and $(2,2,2,2,2)$, the degree/rank data for the tilting bundles $T$ of Figure~\ref{fig:min_wild} show that no arrow $u\ra V$ of $\End(T)$ induces a bijection $G_v\ra G_u$. For weight type $(2,3,7)$ the same conclusion follows by inspection of Figure~\ref{fig:ex2}. Finally, for weight type $(2,2,2,3)$ we modify the example from Figure~\ref{fig:min_wild} by H\"{u}bner reflection in the sink $[7]$ yielding an example with the wanted properties.

\subsection*{Minimal number of central simple modules}
Let $T$ be a tilting bundle on $\XX$ with endomorphism ring $A$. Recall that we identify $\mod{A}$ with a full subcategory of $\Der(\coh\XX)$ and call a simple $A$-module $S$ \emph{central simple} if $S$ has rank zero, that is, belongs to $\cohnull\XX$.
\begin{prop} Depending on the Euler characteristic, the following properties hold.
\begin{enumerate}[\upshape (i)]
\item Assume $\chi_\XX >0$. Then the following assertions hold.
\begin{itemize}
\item[(a)] If $t(\XX)=3$ there exists a tilting bundle $T_{\her}$ with a
  hereditary endomorphism ring $A$ and without central simple
  $A$-modules.
\item[(b)] Assume weight type $(p_1,p_2)$ with $1\leq p_1 \leq p_2$. Then for
  each tilting bundle $T$ the endomorphism ring $A$ has at least $p_2 -p_1$
  central simple $A$-modules, and this bound is attained.
\end{itemize}
\item Assume $\chi_\XX=0$. Then there is a tilting bundle $T$ such that its endomorphism ring
  $A$ is canonical without central simple modules.

\item Assume $\chi_\XX<0$. Then there exists a tilting bundle $T$ such that its endomorphism ring $A$ has no central simple modules.
\end{enumerate}
\end{prop}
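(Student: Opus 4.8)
The plan is to dispose of the three Euler-characteristic cases in turn, in each producing the required tilting bundle (or bounding the invariant for \emph{all} of them) on the basis of the results recalled above. For $\chi_\XX>0$ and $t(\XX)=3$ I would simply take the hereditary tilting bundle $T_\her$ of Proposition~\ref{prop:slope+slice}: by part~(i) of that proposition the quiver of $A=\End(T_\her)$ is bipartite, so, exactly as in the proof of Theorem~\ref{thm:simples_max}, each simple $A$-module is either the simple projective summand $T_i$ at a source of the quiver, hence a vector bundle of rank $\geq1$, or of the form $\tau T_i[1]$ at a sink, hence of negative rank. No simple $A$-module has rank zero, so there are no central simples.

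The substantial case is $\chi_\XX>0$ with weight type $(p_1,p_2)$, $p_1\le p_2$. Recall (as already used in Section~\ref{ssect:min_bij}) that on such $\XX$ every indecomposable vector bundle is a line bundle, so a tilting bundle is $T=\bigoplus_{\vy\in J}\Ocr(\vy)$ with $J\subseteq\LL$, $|J|=n=p_1+p_2$, and, by Lemma~\ref{lem:ext_line}, $-\vc\le\vy-\vy'\le\vc$ for all $\vy,\vy'\in J$. The algebra $A=\End(T)$ is tame concealed of type $\tilde{A}_{n-1}$, so its quiver $Q$ is an $n$-cycle with, say, $s$ sources and $s$ sinks. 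A source of $Q$ corresponds to a $\le$-minimal element of $J$, where the simple $A$-module is the summand itself, of rank $1$; a sink gives a simple of rank $-1$; and at each of the remaining $n-2s$ vertices $v$, which has a unique immediate predecessor $w$, the simple sits in $0\to T_w\to T_v\to S_v\to0$ and hence has rank $0$, i.e.\ is central. Thus the number of central simples equals $n-2s$, and it remains to show $s\le p_1$ with equality attained. For the bound, twist so that $0$ is a minimal element of $J$; using the normal form $\vy=\ell_1\vx_1+\ell_2\vx_2+\ell\vc$ with $0\le\ell_i<p_i$ (as in the proof of Theorem~\ref{thm:main0}), incomparability of a further minimal element $\vy$ with $0$ together with $\vy\le\vc$ forces $\vy=\ell_1\vx_1+\ell_2\vx_2-\vc$ with $0<\ell_i<p_i$; and two such elements $\vy,\vy'$ are incomparable precisely when $(\ell_1-\ell_1')(\ell_2-\ell_2')<0$, so the pairs $(\ell_1,\ell_2)\in\{1,\dots,p_1-1\}\times\{1,\dots,p_2-1\}$ attached to the minimal elements $\neq0$ form a strictly antitone sequence, of length $\le\min(p_1-1,p_2-1)=p_1-1$. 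Hence $s\le p_1$ and the number of central simples is $\ge n-2p_1=p_2-p_1$. Equality holds for $T_\her$ of Proposition~\ref{prop:slope+slice}(ii): a short computation shows its minimal summands are exactly the line bundles of degree $<\ovp/p_2$, and since there are $\gcd(p_1,p_2)$ line bundles of each degree and $\ovp=\lcm(p_1,p_2)$, there are $(\ovp/p_2)\gcd(p_1,p_2)=p_1$ of them.

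For $\chi_\XX=0$ I would reuse the tilting bundle from the proof of Proposition~\ref{prop:min_number_lbundle}, namely $T=\rho(T_\can(\vu))$ with $\delta(\vu)=1$ and $\rho$ the tubular mutation under which $(\deg,\rk)\mapsto(\deg,\deg+\rk)$. One already knows $A=\End(T)$ is canonical; since $\rho$ and the twist by $\vu$ are autoequivalences of $\Der(\coh\XX)$ under which the embedding $\mod A\hookrightarrow\Der(\coh\XX)$ for $T$ is carried to the one for $T_\can$, the simple $A$-modules for $T$ are $\rho$ applied to the $\vu$-twists of the simple modules for $T_\can$. The latter have $(\deg,\rk)$-pair $(1,1)$ at the source, $(\ovp/p_i,0)$ at the interior arm vertices, and $(-(\ovp+1),-1)$ at the sink (using $\delta(\vom)=0$ in the tubular case); applying $\rho$ yields ranks $2$, $\ovp/p_i$ and $-(\ovp+2)$ respectively, all nonzero. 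For $\chi_\XX<0$ I would invoke Theorem~\ref{thm:Kerner:Skow} with $m=2$ to get a tilting bundle $T$ with $\dim_k\Hom(T_i,S)\ge2$ for each summand $T_i$ and each simple sheaf $S$: if some simple $A$-module $S_v$ were central, i.e.\ a nonzero finite-length sheaf $F$, pick a simple subsheaf $S\hookrightarrow F$; then left-exactness of $\Hom(T_v,-)$ gives $\dim_k\Hom(T_v,S)\le\dim_k\Hom(T_v,F)=\dim_k\Hom_A(P_v,S_v)=1$, contradicting $\dim_k\Hom(T_v,S)\ge2$. So $A$ has no central simple module.

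The main obstacle is the two-weight case: one must first pin down the quiver of $\End(T)$ (an $n$-cycle, with the central simples sitting exactly at the ``pass-through'' vertices) and the corresponding rank computation, and then establish the combinatorial bound $s\le p_1$ for the size of a $\vc$-bounded antichain in $\LL$ together with its sharpness for $T_\her$; the other three cases follow fairly quickly from results already in hand.
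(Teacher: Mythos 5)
Your proof is correct, and in three of the four cases it follows the paper's own argument closely: for (i)(a) you use $T_\her$ from Proposition~\ref{prop:slope+slice} and the bipartite quiver exactly as the paper does; for (ii) you apply the tubular mutation $\rho$ to (a twist of) $T_\can$ just as the paper does, computing ranks directly where the paper argues via slopes; and for (iii) you invoke Theorem~\ref{thm:Kerner:Skow} as the paper does, usefully supplying the deduction (via a simple subsheaf of a putative central simple $S_v$ and $\dim\Hom(T_v,S_v)=1$) that the paper leaves implicit. The genuine divergence is in case (i)(b), which the paper delegates to Lemma~\ref{lem:two_weights}. Both routes reduce to the same skeleton: the quiver is an $n$-cycle, the central simples are exactly the pass-through vertices, and one must bound the number $s$ of sources by $p_1$. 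The paper proves ``pass-through implies rank zero'' by identifying $S_i$ with an explicit simple sheaf concentrated at an exceptional point, and obtains $s\le p_1$ from the cyclic arrangement of the $x_1$- and $x_2$-labelled arrows; you instead use the projective resolution $0\to T_w\to T_v\to S_v\to 0$ and bound $s$ by a poset argument in $\LL$ (sources are the minimal elements of $J$, which after normalization give an antichain meeting $\{1,\dots,p_1-1\}\times\{1,\dots,p_2-1\}$ in at most $p_1-1$ points, by Lemma~\ref{lem:ext_line} alone). Your equality witness is $T_\her$ rather than the paper's explicitly drawn scheme, and your count of its $p_1$ minimal summands (those of degree $<\ovp/p_2$, with $\gcd(p_1,p_2)$ line bundles in each degree) checks out. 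The antichain argument is more self-contained; the paper's labelling argument is shorter once the description of the quiver of $\End(T)$ as a cycle with coherently oriented $x_1$- and $x_2$-arrows is accepted, a description you also rely on when asserting the quiver is an $n$-cycle.
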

\begin{proof}
  {Case (i)(a)}: The tilting bundle $T_\her$ from
  Proposition~\ref{prop:slope+slice} has an endomorphism ring $A$
  whose quiver has bipartite orientation. Let $T_1,\ldots,T_n$ denote the (pairwise non-isomorphic) indecomposable summands of $T$. Thus the simple $A$-module
  $S_i$ attached to $T_i$ equals $T_i$ (resp.\ $\tau T_i[1]$) if $i$ is a
  source (resp.~a sink)
  of the quiver of $A$. In particular, each $S_i$ has non-zero rank,
  and $A$ has no central simple modules.

{Case (i)(b)}: We refer to Lemma~\ref{lem:two_weights}, proved below.

{Case (ii)}: We thus consider the case where $\XX$ is tubular. By ~\cite{Lenzing:Meltzer:2000}
there exists an autoequivalence $\rho$ of $\Der (\coh\XX)$ such that
the induced map on slopes is $q\mapsto\frac{q}{1+q}$. Let $T=\rho T_{\can}$ and
$A=\End(T)$. Then the simple $A$-modules all have slope $0$, $1$, or
$\frac{\ovp}{1+\ovp}$. Hence none of these has rank zero.

{Case (iii)}: Assume finally that $\chi_\XX <0$. By Theorem~\ref{thm:Kerner:Skow} there exist infinitely many tilting bundles $T$ such that $\End(T)$ has no central simples.
\end{proof}
\def\zvf{\xymatrix@C7pt@R7pt{ 
&&&\frac{1}{1}[2]&&\frac{9}{2}[4]\\
&&\frac{3}{2}[1]\ar[ru]&&\frac{0}{1}[7]\ar[ru]\ar[rd]\\
&\frac{6}{3}[10]\ar@{..}[rrrruu]\ar[ru]\ar[rd]&&\frac{19}{7}[9]\ar@{..}[rr]\ar[ru]\ar[rd]&&\frac{11}{3}[3]\ar[uu]\\
\frac{4}{2}[5]\ar@{..}[rrrrruuu]\ar[rr]&&\frac{4}{1}[8]\ar[ru]&&\frac{14}{5}[6]\ar[ru]\\
}}

\def\zds{\xymatrix@C10pt@R10pt{ 
&\frac{16}{1}[2]&&&&\frac{6}{1}[6]\\
\frac{13}{2}[8]\ar@{..}[ru]\ar[rd]&&\frac{46}{5}[7]\ar[lu]&&\frac{30}{4}[1]\ar[ru]&&\\
 &\frac{40}{5}[10]\ar[ru]\ar[ld]&&\frac{40}{5}[11]\ar[lu]\ar[ru]&&\frac{17}{2}[4]\ar@{..}[uu]\ar[lu]\ar[dr]\\
\frac{19}{2}[9]&&\frac{0}{1}[5]\ar@{..}[luuu]\ar@{..}[uu]\ar@{..}[ll]\ar[lu]\ar[ru]&&&&\frac{8}{1}[3]\\
}}

\def\ddv{\xymatrix@R20pt@C20pt{   
\frac{1}{2}[1]&\frac{4}{4}[9]\ar[l]\ar[d]&\frac{1}{3}[3]\ar@{..}[ld]\ar@{..}[rd]|{\langle2\rangle}\ar[l]\ar[r]\ar[d]&\frac{2}{2}[4]\ar[d]\\
&\frac{5}{3}[2]\ar@/^1.3pc/[rr]&\frac{6}{5}[6]\ar[l]\ar[r]&\frac{4}{1}[8]\\
&\frac{1}{1}[7]\ar[ru]\ar@{..}[u]&&\frac{2}{2}[5]\ar[lu]\ar@{..}[u]\\
}}

\def\zzzd{\xymatrix@R8pt@C20pt{ 
&\frac{4}{2}[2]\ar[rdd]^{b_5}&\\
&\frac{4}{2}[3]\ar[rd]_{b_4}&\\
\frac{0}{1}[1]\ar[ruu]^{a_5}\ar[ru]^{a_4}\ar[rd]^{a_3}\ar@<.5ex>[rdd]_{a_2}\ar@<-.5ex>[rdd]^{a_1}&&\frac{7}{3}[7]\\
&\frac{4}{2}[4]^{b_3}\ar[ru]&\\
&\frac{5}{3}[6]\ar[ruu]_{b_3}\ar[r]^{c}&\frac{2}{1}[5]\\
}}
\def\relzzzd{$b_3a_3=ba_1$,  $b_4a_4=ba_2$, $b_5a_5=b(a_2-a_1)$, $c(a_2-\la a_1)=0$}
\def\relzzzzz{$b_1a_i=\la_ib_1a_1$ for $i=3,4,5$; $b_2a_i=b_2a_j$ for $i,j=3,4,5$; $b_ja_i=0$ for $j\neq1,2,i$}

\def\zzzzz{\xymatrix@R10pt@C30pt{ 
&&\frac{1}{1}[2]\\
&&\frac{1}{1}[3]\\
\frac{0}{1}[1]\ar[r]^{a_2}\ar@<2ex>[r]^{a_1}
\ar@<-2ex>[r]^{a_3}&\frac{3}{4}[7]\ar[ruu]^{b_1}\ar[ru]^{b_2}\ar[r]^{b_3}\ar[rd]_{b_4}\ar[rdd]_{b_5}&\frac{1}{1}[4]\\
&&\frac{1}{1}[5]\\
&&\frac{1}{1}[6]\\
}}

For illustration, we present explicit examples for the minimal wild
weight types in Figure~\ref{fig:min_wild}. For the three algebras of
triple weight type the graphical information determines the algebras
up to isomorphism, see~\cite{Lenzing:Meltzer:2002}. For the two remaining weight types, the explicit relations are given afterwards.

\begin{figure}[pb]
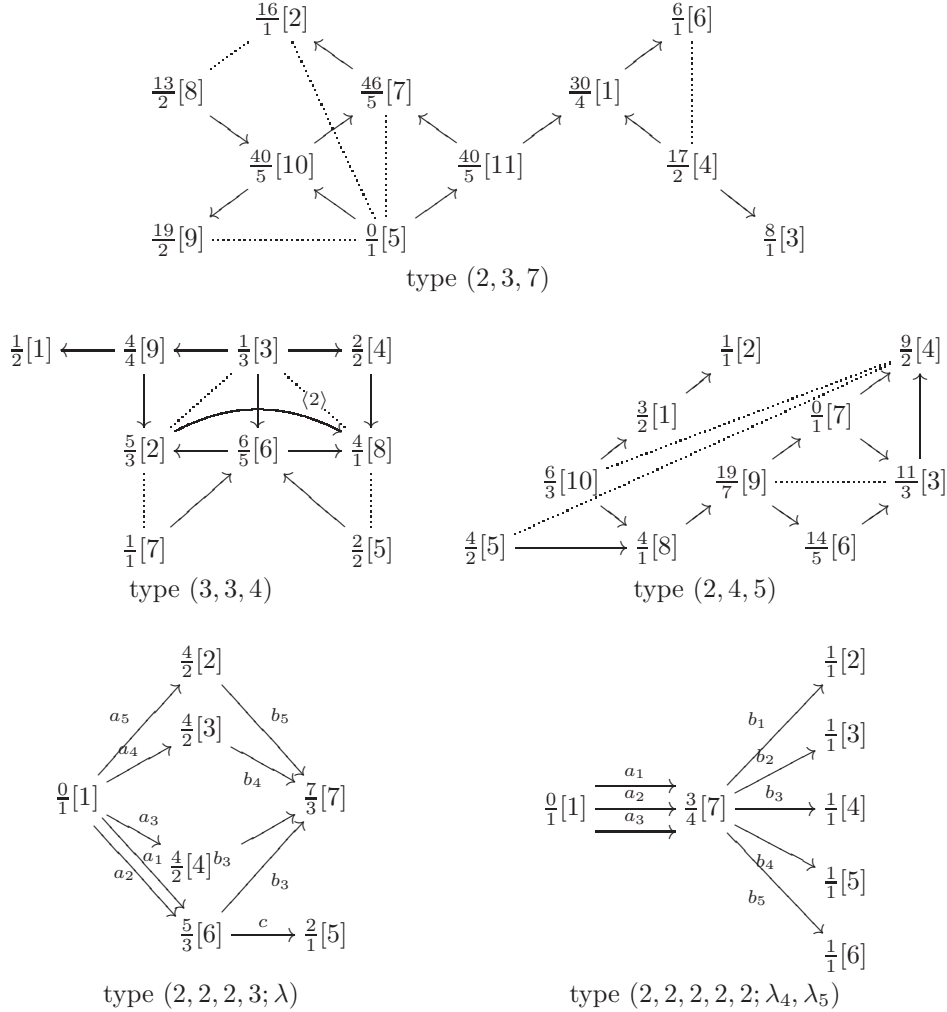

\begin{tabular}{ccc}
\multicolumn{3}{c}{$\zds$}\\
\multicolumn{3}{c}{type $(2,3,7)$}\\
&\\
$\ddv$&\quad&$\zvf$\\
type $(3,3,4)$ && type $(2,4,5)$\\
&\\
$\zzzd$ &\quad& $\zzzzz$\\
type $(2,2,2,3;\la)$ && type $(2,2,2,2,2;\la_4,\la_5)$\\
\end{tabular}
\caption{Minimal wild canonical type without central simple modules}
\label{fig:min_wild}
\end{figure}
\sloppy
The following sequences of H\"{u}bner reflections, see Section~\ref{ssect:Huebner}, transform the tilting bundles, depicted above, into $T_\can$, up to line bundle twist: $(2,3,7)$: (10, 7, 11, 1, 2, 4, 7, 8, 9, 10, 11);  $(2,4,5)$: $(9,6,4,5,3,2,10,1)$;  $(3,3,4)$: $(6,5,4,3,2,9,1)$; $(2,2,2,3)$: $(6,4,3,2,7)$; $(2,2,2,2,2)$: (7).
\fussy
Concerning $(2,2,2,3)$, we impose the relations \relzzzd\ where $\la$ is supposed to be different from $0,1$. Concerning $(2,2,2,2,2)$, we impose the relations \relzzzzz. We assume $\la_3=1$ and $\la_4\neq\la_5$ to be different from $0,1$.

\begin{lem} \label{lem:two_weights}
Assume $\XX$ of weight type $(p_1,p_2)$. Let $T$ be a tilting bundle and $Q$ the quiver of $\End(T)$. Then the number $\nu(T)$ of central simple $A$-modules equals the number of vertices of $Q$ which are neither a sink nor a source. Always we have $\nu(T)\geq|p_1-p_2|$ with equality attained for the tilting object $T$ given by the scheme
$$
\xymatrix@C18pt@R20pt{
                 &\circ\ar[r]^{x_2}&\circ\ar[r]^{x_2}\cdots\circ\ar[r]^{x_2}&\circ\ar[rd]^{x_2}\\
\circ\ar[r]^{x_1}\ar[ru]^{x_2}&\circ&\circ\ar[l]^{x_2}\cdots\circ\ar[r]^{x_1}&\circ&\circ\ar[l]^{x_2}\\
}
$$
Assuming $p_1\leq p_2$, the scheme contains  $p_1$ pairs
$\xymatrix{\circ\ar[r]^{x_1}&\circ&\circ\ar[l]^{x_2}}$ of arrows, labeled $x_1$ and $x_2$, that are followed by
$p_2-p_1$ arrows, labeled $x_2$, with arrows labeled $x_2$ (resp.\ $x_1$)
having clockwise (resp.\ anticlockwise) orientation.
\end{lem}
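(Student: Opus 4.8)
Lemma~\ref{lem:two_weights} will be handled by reducing everything to the well-understood structure of tilting bundles on a weighted projective line with only two weights. First I would recall that for weight type $(p_1,p_2)$ the category $\coh\XX$ is derived equivalent to the module category of a hereditary algebra whose quiver is the $(p_1+p_2)$-cycle with $p_1$ of its edges oriented one way around the cycle and $p_2$ the other way (for instance the canonical algebra of type $(p_1,p_2)$, which carries no relations since $t=2$); see \cite{Geigle:Lenzing:1987}, \cite{Ringel:1984}, \cite{Happel:Vossieck:1983}. As a consequence, every tilting bundle $T=\bigoplus_{i=1}^n T_i$ on $\XX$ is a direct sum of $n=p_1+p_2$ line bundles $T_i=\Oo(\vy_i)$, and $A=\End(T)$ is again hereditary of the same derived type, so that its quiver $Q$ is an orientation of the $(p_1+p_2)$-cycle, without oriented cycle, having $p_1$ arrows in one of the two cyclic directions and $p_2$ in the other. (That the arrow numbers are $p_1$ and $p_2$, and not merely some pair with sum $n$, is forced for example by the invariance of the Coxeter polynomial under derived equivalence.)

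To identify $\nu(T)$ with the number of vertices of $Q$ that are neither a sink nor a source, I would compute $\rk S_i$ for each simple $A$-module $S_i$. Since $A$ is hereditary, the minimal projective resolution of $S_i$ reads $0\to\rad P_i\to P_i\to S_i\to0$, with $\rad P_i$ the direct sum of the $T_j$ attached to the targets of the arrows of $Q$ starting at $i$; as every $T_k$ has rank $1$ this gives, in $\Knull(\coh\XX)$, $\class{S_i}=\class{T_i}-\sum_j\class{T_j}$ and hence $\rk S_i=1-d_i$, where $d_i$ is the out-degree of $i$ in $Q$. On the cycle $Q$ one has $d_i\in\{0,1,2\}$, and $d_i=1$ precisely for the vertices that are neither a sink nor a source; for these $\rk S_i=0$, that is, $S_i$ is central, while for the sinks and sources $\rk S_i=\pm1$ (there $S_i$ or its shift $S_i[-1]$ is a line bundle, using that on $\XX(p_1,p_2)$ every indecomposable vector bundle has rank $1$). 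Since being central means having rank zero, this proves the first assertion.

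For the inequality I would use that on the cycle $Q$ the number $s$ of sources equals the number of sinks — going around the cycle, the clockwise and the anticlockwise arrows form equally many maximal arcs, and the arc-boundaries are the sources and sinks, occurring alternately — so that $\nu(T)=n-2s$ by the first part. These $2s$ arcs comprise all $n=p_1+p_2$ arrows, $s$ of them carrying the $p_1$ arrows of one handedness and $s$ of them the $p_2$ arrows of the other; as every arc is non-empty this forces $s\le\min(p_1,p_2)$, and therefore $\nu(T)=n-2s\ge(p_1+p_2)-2\min(p_1,p_2)=|p_1-p_2|$ for an arbitrary tilting bundle $T$. Equality holds exactly when $s=\min(p_1,p_2)$, that is, when the $\min(p_1,p_2)$ arrows of minority handedness are pairwise non-adjacent on the cycle; this is precisely the combinatorics of the displayed scheme, which hence attains $\nu=|p_1-p_2|$.

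It then remains to exhibit a genuine tilting bundle realising the displayed quiver. Assuming $p_1\le p_2$, I would write it down directly: starting from $\Oo$ and following the prescribed cyclic pattern of $p_1$ shapes $\Oo(\vy)\to\Oo(\vy+\vx_1)\leftarrow\Oo(\vy+\vx_1-\vx_2)$ followed by a tail of $p_2-p_1$ arrows labelled $x_2$, one reads off $n$ line bundles $\Oo(\vy_i)$; using $p_1\vx_1=p_2\vx_2=\vc$ one checks that the prescription is consistent (the path returns to $\Oo$), that the $\vy_i$ are pairwise distinct, and that $-\vc\le\vy_i-\vy_j\le\vc$ throughout, so that $T=\bigoplus_i\Oo(\vy_i)$ has no self-extensions by Lemma~\ref{lem:ext_line}; that $T$ generates $\coh\XX$ then follows by comparing $\class{\Oo}$ with the classes $\class{\Oo(\vy_i)}$, or by recognising $T$ as the hereditary tilting bundle $T_\her$ of Proposition~\ref{prop:slope+slice}(ii) up to a line bundle twist. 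By the previous paragraph this $T$ has $s=p_1$ and $\nu(T)=|p_1-p_2|$, which completes the proof. The step I expect to require the most care is exactly this last one — verifying that the prescribed sequence of line bundles really closes into a cycle and stays inside the window $[-\vc,\vc]$ of Lemma~\ref{lem:ext_line}, so that it is an honest tilting bundle and not merely a formal pattern; the structural inputs of the first paragraph, although indispensable, are standard for the two-weight case, and the hereditarity of $A$ is what makes the rank computation in the second paragraph so clean.
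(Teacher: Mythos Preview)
Your argument is correct and reaches the same conclusion, but it proceeds along a different line from the paper's proof. The paper does not invoke the heredity of $A$; instead, for an interior vertex $i$ it explicitly identifies the simple $A$-module $S_i$ with a concrete simple \emph{sheaf} $U$ concentrated in the appropriate exceptional point (using that one of $x_1,x_2$ acts as zero and the other as the identity on $U$, together with the observation that all $x_1$-arrows of $Q$ share a common orientation), and reads off $\rk S_i=0$ from that. Your route---using that for two weights every indecomposable bundle is a line bundle and that $A$ is hereditary (both consequences of the Happel--Vossieck picture for $\tilde{A}$), then extracting $\rk S_i$ from the projective resolution $0\to\rad P_i\to P_i\to S_i\to 0$---is cleaner for the bare rank computation, while the paper's approach yields the extra information of \emph{which} finite-length sheaves the central simples actually are. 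Your combinatorial derivation of $\nu(T)\ge|p_1-p_2|$ via $s\le\min(p_1,p_2)$ is more explicit than the paper's, which leaves this step and the final construction essentially to the reader.

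One small slip: with the paper's conventions (right modules, $P_i$ identified with $T_i$), the radical $\rad P_i$ is the sum of the $P_j$ over arrows $j\to i$ \emph{into} $i$, not out of $i$; equivalently, a source gives $\rk S_i=+1$ and a sink gives $-1$, as in the proof of Theorem~\ref{thm:simples_max}. Your formula $\rk S_i=1-d_i$ therefore holds with $d_i$ the in-degree. This swap is harmless for the lemma, since on the cycle the interior vertices are exactly those with in-degree (equivalently out-degree) equal to $1$, but you should adjust the sentence to match the ambient conventions. Also, the displayed tilting bundle is not literally $T_\her$ in general, so your direct verification via Lemma~\ref{lem:ext_line} is the right way to close the argument.
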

\begin{proof}
  If $i\in[1,n]$ is a source (resp.\ sink) of $Q$, then the simple
  $A$-module $S_i$, corresponding to $T_i$, has rank $1$ (resp.\ $-1$). Assume, conversely,
  that $i$ is not a sink or a source of $Q$, hence locally we have one of the two cases
  (a)$\xymatrix{i+1\ar[r]^-{x_1}&i\ar[r]^-{x_1}&i-1}$ or (b)
  $\xymatrix{i-1\ar[r]^-{x_2}&i\ar[r]^-{x_2}&i+1}$ where we say that $i$ is an interior
   vertex. We claim that then
  $S_i$ has rank zero. Assuming case (a), let $U$ be the unique simple
  sheaf concentrated in the first exceptional point $\la_1$ having the
  additional property $\Hom(T_i,U)\neq0$ (and then
  $\Hom(T_i,U)=k$). We note that multiplication by $x_2$ (resp.\ by
  $x_1$) acts as the identity (resp.\ the zero map) on $U$. Since all
  the $x_1$-arrows of $Q$ (we have $p_1$ of them) have the same
  orientation, we conclude that $\Hom(T_j,U)=0$ for each vertex $j\neq
  i$. Under our usual identification of modules and sheaves $U$ thus
  equals the simple $A$-module $S_i$, which therefore has rank
  zero. This proves the fist claim and further shows that $\nu(T)$
  equals the number of interior vertices $i$ in the cyclic
  arrangement of labels $x_1$ and $x_2$. It follows that
  $\nu(T)\geq|p_1-p_2|$. The proof of the last claim is obvious.
\end{proof}

\subsection*{Minimal width} \label{ssect:min_width}
Tilting bundles of minimal width only exist for positive Euler characteristic as is shown
in our next result.
\begin{thm}\label{thm:min_width}
Let $\XX$ be a weighted projective line.
\begin{itemize}
\item[(i)] Assume $\chi_\XX>0$. Then the minimal width for  tilting bundles on $\XX$ equals $|\delta(\vom)|/2$ for $t(\XX)=3$ and $\delta(\vx_1)+\delta(\vx_2)$ for $t(\XX)\leq2$.
\item[(ii)] If $\chi_\XX\leq0$ then there exists a sequence $(T_n)$ of tilting bundles on $\XX$ such that the sequence
$(\width(T_n))$ converges to zero and, moreover, each indecomposable summand $E$ of $T_n$
has rank $\geq n$.
\end{itemize}
\end{thm}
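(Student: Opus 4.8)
Here is a proposed strategy.

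\medskip

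\noindent\textbf{Plan.} I would treat the two parts separately, in each case producing the extremal bundle first and then proving the bound. For part~(i) the extremal object is $T_\her$ of Proposition~\ref{prop:slope+slice}. When $t(\XX)=3$ its indecomposable summands have slope $0$ or $|\delta(\vom)|/2$, and both values must actually occur (otherwise $\End(T_\her)$ would be semisimple, impossible for a tilting bundle), so $\width(T_\her)=|\delta(\vom)|/2$; when $t(\XX)\le 2$ it is the direct sum of the line bundles occupying the degree range $0\le\delta(\vx)\le|\delta(\vom)|-1$, realizing the value in question. For the matching lower bound when $t(\XX)=3$ I would first upgrade Proposition~\ref{prop:slope+slice}(i) to: \emph{every} indecomposable vector bundle $E$ on $\XX$ has slope in $\tfrac12|\delta(\vom)|\,\ZZ$. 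Indeed, twisting by $\vom$ shifts all slopes by $-|\delta(\vom)|$, so some twist $E(k\vom)$ has slope in $[0,|\delta(\vom)|)$, hence is a summand of $T_\her$ and so has slope $0$ or $|\delta(\vom)|/2$; undoing the twist gives the claim. Thus the slopes of the summands of an arbitrary tilting bundle $T$ all lie in $\tfrac12|\delta(\vom)|\,\ZZ$, so $\width(T)>0$ already forces $\width(T)\ge|\delta(\vom)|/2$. It then remains to exclude $\width(T)=0$: if all $T_i$ had a common slope $q$, applying the linear form $\deg$ to $[\Oo]=\sum_i m_i[T_i]$ (legitimate since $\{[T_i]\}$ is a $\ZZ$-basis of $\Knull(\coh\XX)$) gives $0=q\sum_i m_i\rk T_i=q\cdot\rk[\Oo]=q$, whence $q=0$ and $\deg T_i=0$ for all $i$; but applying $\deg$ to the identity $\sum_i\rk(S_i)[T_i]=-\bw$ of Lemma~\ref{eq:Huebner(b)} yields $0=-\ovp$, a contradiction. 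For $t(\XX)\le 2$ all summands of $T$ are line bundles by Theorem~\ref{thm:linebdle_max}, hence have integer slopes; since only finitely many line bundles lie over each degree and $T$ has $n$ summands, a packing argument — controlling via Lemma~\ref{lem:ext_line} and the almost-linearity \eqref{eq:cplusomega} how far a tilting family of line bundles may fail to spread — forces $\width(T)\ge\width(T_\her)$.

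\medskip

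For part~(ii) with $\chi_\XX=0$ the bundles are produced explicitly by tubular mutation. Fix $\vu$ of degree one and put $T_m=\rho^m\bigl(T_{\can}(\vu)\bigr)$, where $\rho$ is the tubular mutation of Section~\ref{ssect:tub_mut}: it fixes the degree, sends $(\deg,\rk)=(d,r)$ to $(d,d+r)$, and so $\rho^m$ sends $(d,r)$ to $(d,md+r)$ and acts on slopes by $q\mapsto q/(mq+1)$. Along each arm of $T_{\can}(\vu)$ the indecomposable summands have rank $1$ and degree between $1$ and $\ovp+1$, hence positive slope, and $\rho$ preserves positivity of the slope; thus $T_m$ is again a tilting bundle, its summand with data $(d,1)$ becomes $(d,md+1)$ of rank $\ge m+1\ge m$, and its width equals $\tfrac{\ovp+1}{m(\ovp+1)+1}-\tfrac1{m+1}=\tfrac{\ovp}{(m(\ovp+1)+1)(m+1)}$, which tends to $0$. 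This settles the tubular case.

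\medskip

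For $\chi_\XX<0$ there is no contracting self-equivalence of $\Der(\coh\XX)$, and this is the real difficulty. My plan is to reduce to the tubular case through perpendicular categories: choose a tubular weighted projective line $\XX'$ (of weight type $(2,2,2,2)$, $(3,3,3)$, $(2,4,4)$ or $(2,3,6)$) together with an exceptional sequence $E_1,\dots,E_k$ of finite-length sheaves on $\XX$ whose right perpendicular category $\rperp{(E_1,\dots,E_k)}$ is equivalent to $\coh\XX'$ — such a stepwise reduction of the weight type always exists. Since the $E_i$ have finite length, this equivalence carries vector bundles on $\XX'$ to vector bundles on $\XX$, so from a tilting bundle $T'$ on $\XX'$ one obtains the tilting sheaf $T'\oplus E_1\oplus\cdots\oplus E_k$ on $\XX$; a bounded sequence of Hübner reflections (Section~\ref{ssect:Huebner}) is then used to replace the torsion summands $E_i$ by vector bundles whose slopes can be pushed into an arbitrarily small neighbourhood of the slope window of $T'$, yielding an honest tilting bundle $\widehat T$ on $\XX$ with $\width(\widehat T)$ exceeding $\width(T')$ by a controllable amount and with ranks bounded below in terms of those of $T'$. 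Feeding in the sequence $T'=T_m$ from the tubular case then produces tilting bundles on $\XX$ of width tending to $0$ and all ranks tending to infinity. The main obstacle is exactly this last reduction: arranging the perpendicular embedding together with the subsequent reflections so that the ``vector bundle'' property is preserved and the new summands can be made to sit at nearly the slopes of $T'$. A technically different fallback would be to realize the (nonexistent) iterate $\rho^m$ as a fixed finite sequence of Hübner reflections applied to $T_{\can}$ and to estimate directly how the width evolves under iteration.
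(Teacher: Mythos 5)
Your strategy coincides with the paper's for part (i) and for the tubular half of part (ii): the paper likewise takes $T_\her$ as the extremal object in (i), and in the tubular case applies powers of the tubular mutation $\rho$ to a tilting bundle all of whose summands have strictly positive slope (the paper uses $\rho^{n\bp}T$; your $\rho^m(T_\can(\vu))$ with $\delta(\vu)=1$ is the same mechanism, and your rank and width computations there are correct). Your lower-bound argument in (i) for $t(\XX)=3$ --- every indecomposable bundle has slope in $\tfrac12|\delta(\vom)|\ZZ$ because its $\tau$-orbit meets the slice $T_\her$, plus the exclusion of width zero via $[\Oo]=\sum_i m_i[T_i]$ and Lemma~\ref{eq:Huebner(b)} --- is sound and is actually more than the paper records (the paper only exhibits $T_\her$). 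For $t(\XX)\le2$, however, your ``packing argument'' is not carried out, and note that the bundle you describe has width $\delta(\vx_1)+\delta(\vx_2)-1$, not $\delta(\vx_1)+\delta(\vx_2)$; this off-by-one discrepancy is already present between the paper's statement and its own proof, but you should not claim the stated value is ``realized'' by an object whose width you have just computed to be one less.

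The genuine gap is the one you yourself flag: the case $\chi_\XX<0$. You have correctly guessed the paper's route (pass to a tubular $\coh\YY\simeq\rperp{B}$ for a direct sum $B$ of branches of finite-length sheaves, import the tubular sequence $T(n)$, and convert $T(n)\oplus B$ into a tilting bundle by H\"{u}bner reflections), but the ``main obstacle'' you leave open is precisely the content of the proof, and it does not require any delicate ``pushing of slopes''. The paper closes it as follows. The root $U_h$ of each branch is a formal sink of $\bar{T}(n)=T(n)\oplus B$, and the reflection sequence $0\to U_h^*(n)\to\bigoplus_{j}T_j^{\ka_j}(n)\to U_h\to0$ has middle term a vector bundle, so $U_h^*(n)$ is \emph{automatically} a vector bundle, of rank $r(n)=\sum_j\ka_j\rk T_j(n)>n$, and of slope $\mu_\XX\bigl(\bigoplus_jT_j^{\ka_j}(n)\bigr)-\deg_\XX(U_h)/r(n)$, i.e.\ a convex combination of the $\mu_\XX(T_j(n))$ minus a term tending to $0$; one then iterates on the new root until all branches are exhausted. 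Two points you would still have to supply to make this rigorous: first, the embedding $\coh\YY\incl\coh\XX$ preserves rank but \emph{not} degree, so the slopes of the $T_i(n)$ must be recomputed in $\coh\XX$ --- the paper proves via formula \eqref{eq:mult_right_mut} that all the sequences $\mu_\XX(T_i(n))$ converge to the common limit $\frac{1}{\bp}\deg_\XX(Z)$ with $Z=\si^{-1}\rho(S_0)$, which is what forces the width of the final bundle to tend to $0$; second, the rank bound for the reflected summands comes for free from the reflection sequence and needs no separate estimate. Your ``fallback'' of realizing $\rho$ by H\"{u}bner reflections on $T_\can$ is not needed and would be harder to control.
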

\begin{proof}
Concerning (i) we use Proposition~\ref{prop:slope+slice} stating that the direct sum $T$ of (a representative system of) the indecomposable vector bundles $E$ of slope $0\leq \mu E < |\delta(\vom)|$ forms a tilting bundle. For $t(\XX)=3$, each indecomposable summand $E$ of $T$ actually has slope $0$ or $|\delta(\vom)|/2$, showing that the width of $T$ equals $|\delta(\vom)|/2$. For $t(\XX)\leq2$, each indecomposable vector bundle has rank one, such that $T$ is the direct sum of all \emph{line bundles} $\Oo(\vx)$ with $0\leq\vx\leq |\delta(\vom)|-1$. Thus in this case the width of $T$ equals $|\delta(\vom)|-1=\delta(\vx_1)+\delta(\vx_2)-1$.

In the tubular case assertion (ii) is covered by Proposition~\ref{prop:width_tubular}. For $\chi_\XX<0$ the proof of (ii) is also given afterwards.
\end{proof}

We first assume that $\YY$ is tubular, and collect some facts on the
tubular mutations $\si$ and $\rho$ from Section~\ref{ssect:tub_mut}. Let
$\bw=[S_0]$ denote the class of any ordinary simple sheaf $S_0$. Further let $\bp=\bp(\YY)$
denote the largest weight of $\YY$. We first note
that for each $y$ from $\Knull(\coh\YY)$ we have
\begin{equation}\label{eq:multiple_shift}
\si^{n\bp}(y) = y + n \rk(y)\,\bw,
\end{equation}
a formula valid for any weight type. (This follows from the formula $\bp\vx=\vc$ if $\vx$ is one of the standard generators $\vx_i$ of $\LL$ of degree one.) By means of  the conjugation formula
\eqref{eq:conj_from_braid},
$\rho^{-1}=(\rho^{-1}\si)^{-1}\si(\rho^{-1}\si)$, we obtain a corresponding formula
\begin{equation}\label{eq:mult_right_mut}
\rho^{n\bp}(y) = y + n\,\deg(y)\bz,
\end{equation}
for the action of $\rho$ on members $y$ of $\Knull(\coh\YY)$, where $\bz$ denotes the class of $Z=\si^{-1}\rho(S_0)$.

We call a bundle $E$ on a weighted projective line $\XX$ \emph{omnipresent on $\XX$}
if $\Hom(E,S)\neq0$ for each simple sheaf $S$.
\begin{lem} \label{lem:omnipresent}
The indecomposable bundle $Z=\si^{-1}\rho(S_0)$ is omnipresent on $\YY$.
\end{lem}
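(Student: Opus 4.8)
The plan is to make the universal extension defining the tubular mutation $\rho$ completely explicit and then to reduce the assertion to a residue count at the exceptional points of $\YY$. Write $\ovp=\lcm(p_1,\dots,p_t)$; recall that for tubular $\YY$ this equals the largest weight $\bp$, so that $\si$ is the twist by $\vx_t$ with $\delta(\vx_t)=1$.

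\emph{Step 1 (identification of $Z$).} Since $\YY$ is tubular we have $\delta(\vom)=0$, hence $\ovp\vom=\delta(\vom)\vc=0$ and $\tau^{\ovp}=\mathrm{id}$ on $\coh\YY$; thus the $\tau$-orbit of $\Oo$ consists of the $\ovp$ line bundles $\tau^{j}\Oo=\Oo(j\vom)$, $j=1,\dots,\ovp$. As $S_0$ lies in a homogeneous tube we have $\tau S_0\cong S_0$, so Serre duality gives
$$\Ext^1(S_0,\tau^{j}\Oo)=\dual\Hom(\tau^{j}\Oo,\tau S_0)=\dual\Hom(\Oo,\tau^{1-j}S_0)=\dual\Hom(\Oo,S_0)=k$$
for every $j$. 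Hence the universal extension defining $\rho$ specialises to
$$0\lra\bigoplus_{j=1}^{\ovp}\Oo(j\vom)\lra\rho(S_0)\lra S_0\lra0,$$
and twisting by $-\vx_t$ yields the exact sequence
\begin{equation*}
0\lra\bigoplus_{j=1}^{\ovp}\Oo(j\vom-\vx_t)\lra Z\lra S_0(-\vx_t)\lra0.
\tag{$\ast$}
\end{equation*}
(In particular the full-rank sub-bundle shows again that $Z$ is a bundle.)

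\emph{Step 2 (applying $\Hom(-,S)$).} Let $S$ be a simple sheaf. If $S$ is concentrated at an ordinary point the claim is immediate, since any nonzero vector bundle admits a nonzero morphism onto such an $S$. If $S$ is one of the $p_i$ simple sheaves supported at the exceptional point $\lambda_i$, then $S_0(-\vx_t)$ is again concentrated at an ordinary point (twisting maps homogeneous tubes to homogeneous tubes), so $\Hom(S_0(-\vx_t),S)=0=\Ext^1(S_0(-\vx_t),S)$, and the long exact sequence obtained from $(\ast)$ collapses to $\Hom(Z,S)\cong\bigoplus_{j=1}^{\ovp}\Hom\bigl(\Oo(j\vom-\vx_t),S\bigr)$. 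Now $\Hom(\Oo(\vy),S)\neq0$ holds precisely when the image of $\vy$ under the homomorphism $r_i\colon\LL\to\ZZ/p_i$ with $r_i(\vx_i)=1$ and $r_i(\vx_k)=0$ for $k\neq i$ equals the one residue class singled out by $S$ (as $S$ ranges over the $p_i$ simples at $\lambda_i$, these classes exhaust $\ZZ/p_i$). Since $r_i(\vom)=-1$ and $p_i\mid\ovp$, as $j$ runs over $1,\dots,\ovp$ the classes $r_i(j\vom-\vx_t)=-j-r_i(\vx_t)$ run through all of $\ZZ/p_i$; hence at least one summand on the right is nonzero, so $\Hom(Z,S)\neq0$. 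This proves that $Z$ is omnipresent on $\YY$.

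\emph{Main obstacle.} The delicate point is Step 1: one must use the tubular hypothesis to see that the sum in the universal extension runs over a \emph{complete} $\tau$-period of $\Oo$, evaluate the $\Ext^1$-multiplicities as $1$, and confirm that $\rho(S_0)$ is an honest torsion-free bundle. Once $(\ast)$ is in hand, the remaining content is the purely combinatorial observation that the $\ovp$ twists of $\Oo$ by the multiples of $\vom$ realise every residue class at every exceptional point --- which is exactly why these $\ovp$ line bundle summands of $Z$ force omnipresence. (Alternatively one can argue from the adjunction $\Hom(Z,S)\cong\Hom(S_0,\rho^{-1}\si\,S)$ and identify $\rho^{-1}\si\,S$ with a shift $Y[1]$ of a vector bundle $Y$; but this merely transfers the difficulty to tracking the standard $t$-structure under the tubular mutations.)
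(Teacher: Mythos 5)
Your proof is correct, but it takes a genuinely different route from the paper's. The paper disposes of the lemma in three lines by a pure Euler-form computation: since $Z$ is a bundle and $S$ is torsion, $\Ext^1(Z,S)=0$, so $\dim\Hom(Z,S)=\LF{Z}{S}$; invariance of the Euler form under the autoequivalence $\si^{-1}\rho$ turns this into $\LF{S_0}{\rho^{-1}\si S}=-\rk(\rho^{-1}\si S)$, and the matrix action of $\rho^{-1}$ on degree--rank pairs gives $\rk(\rho^{-1}\si S)=-\deg S$, whence $\dim\Hom(Z,S)=\deg S>0$. You instead make $Z$ explicit via the universal extension and then run a residue count at each exceptional point. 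This is longer, and it leans on more input (the precise shape of the universal extension, the identification $\Hom(\Oo,S_0)=k$, and the standard congruence criterion for $\Hom(\Oo(\vy),S)\neq0$), but it buys concrete information: it exhibits the full-rank subsheaf $\bigoplus_{j=1}^{\ovp}\Oo(j\vom-\vx_t)$ of $Z$ and shows exactly which of these line-bundle summands maps onto a given simple; counting the solutions $j$ of your congruence even recovers the paper's exact value $\dim\Hom(Z,S)=\ovp/p_i=\deg S$. One small point to tighten: your parenthetical remark that the full-rank sub-bundle ``shows again that $Z$ is a bundle'' is not by itself a proof --- a split extension of a torsion sheaf by a vector bundle still has torsion. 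The correct reason is that $\rho S_0$ is indecomposable (being the image of an indecomposable under an equivalence) and has positive rank, hence is a vector bundle; alternatively, this is presupposed in the statement of the lemma. Everything else, including the vanishing of $\Hom$ and $\Ext^1$ from $S_0(-\vx_t)$ to a simple supported at an exceptional point and the computation $r_i(\vom)=-1$, checks out.
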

\begin{proof}
Let $S$ denote any simple sheaf, say of degree $d$. Then $\si S$ is again simple,
having the same degree. Since $\rho^{-1}$ acts on the degree-rank pair $(d,0)$
by right multiplication with $\left(\begin{array}{cc}1&-1\\ 0&1\end{array}\right)$,
we obtain that $(d,-d)$ is the degree-rank pair for $\rho^{-1}\si S_0$.
It follows that
$$\dim \Hom(Z,S)=\LF{Z}{S}=\LF{S_0}{\rho^{-1}\si S}=-\rk(\rho^{-1}\si S)=d>0,$$
as claimed.
\end{proof}

\begin{prop} \label{prop:width_tubular}
Assume that $\YY$ is tubular. Let $T=\bigoplus_{i=1}^mT_i$ be a tilting bundle on $\YY$
whose indecomposable summands $T_i$ all have strictly positive slope. For
each integer $n\geq0$ we put
$$
T(n)=\rho^{n\bp}T \quad \textrm{ and }\quad T_i(n)=\rho^{n\bp}T_i.
$$
Then each $T(n)$ is a tilting bundle on $\YY$ with endomorphism ring isomorphic to $\End(T)$.
Moreover, the following holds for each $i=1,\ldots,m$:

(a) We have $\rk(T_i(n)) > \bp\,n$, moreover the slope sequence $(\mu T_i(n))_n$ converges to zero. In
particular, the width sequence $(\width(T(n)))$ converges to zero.

(b) For each simple sheaf $S$ on $\YY$ we have $\dim \Hom(T_i(n),S)\geq n$.
\end{prop}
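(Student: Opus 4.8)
\emph{Strategy and reduction.} The idea is to run everything through the $\Knull$-level identity \eqref{eq:mult_right_mut} together with the elementary fact that $\Ext^1$ from a vector bundle into a finite length sheaf vanishes; this turns the Hom-dimensions to be estimated into values of the Euler form, which are then computed by linearity on $\Knull(\coh\YY)$ using the omnipresence of $Z$ (Lemma~\ref{lem:omnipresent}). First, since $\rho$, hence $\rho^{n\bp}$, is a triangle self-equivalence of $\Der(\coh\YY)$, the object $T(n)=\rho^{n\bp}T$ is a (multiplicity-free) tilting object of $\Der(\coh\YY)$ with $\End(T(n))=\Hom_{\Der(\coh\YY)}(T(n),T(n))\cong\Hom_{\Der(\coh\YY)}(T,T)=\End(T)$. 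To see that $T(n)$ in fact lies in $\coh\YY$ and consists of bundles, I would use the universal extension defining $\rho$: for an indecomposable bundle $X$ of positive slope, $\rho X$ sits in an exact sequence $0\to K\to\rho X\to X\to 0$ in $\coh\YY$ with $K$ a finite direct sum of line bundles, so $\rho X$ is again a bundle and has slope $\mu X/(1+\mu X)>0$. Thus $\rho$, hence every power of it, maps indecomposable bundles of positive slope to indecomposable bundles of positive slope; since each $T_i$ has strictly positive slope by hypothesis, each $T_i(n)=\rho^{n\bp}T_i$ is such a bundle, so $T(n)$ is a tilting bundle.

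\emph{Assertion (a).} By \eqref{eq:mult_right_mut}, $[T_i(n)]=[T_i]+n\,\deg(T_i)\,\bz$. A direct computation of the degree--rank pair of $Z=\si^{-1}\rho(S_0)$ --- starting from $(\deg S_0,\rk S_0)=(\bp,0)$, applying first the $\rho$-action (which adds the degree to the rank), then the $\si^{-1}$-action (which subtracts the rank from the degree) --- gives $\deg(Z)=0$ and $\rk(Z)=\bp$. Hence $\deg(T_i(n))=\deg(T_i)$ and $\rk(T_i(n))=\rk(T_i)+n\bp\deg(T_i)$. As $T_i$ is a nonzero bundle of positive slope, $\rk(T_i)\geq1$ and $\deg(T_i)\geq1$, so $\rk(T_i(n))\geq\rk(T_i)+n\bp>n\bp$, while $\mu(T_i(n))=\deg(T_i)/(\rk(T_i)+n\bp\deg(T_i))$ lies in $(0,1/(n\bp)]$ for $n\geq1$; in particular the slopes $\mu(T_i(n))$ tend to $0$, and therefore $\width(T(n))=\max_i\mu(T_i(n))-\min_i\mu(T_i(n))\leq 1/(n\bp)\to0$.

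\emph{Assertion (b).} Fix a simple sheaf $S$ on $\YY$. Since $T_i(n)$ is a bundle, so is its twist $\tau T_i(n)=T_i(n)(\vom)$, and a bundle admits no nonzero finite length subobject; thus $\Hom(S,\tau T_i(n))=0$, and Serre duality gives $\Ext^1(T_i(n),S)=\dual\Hom(S,\tau T_i(n))=0$, and likewise $\Ext^1(T_i,S)=0=\Ext^1(Z,S)$. Combining $[T_i(n)]=[T_i]+n\deg(T_i)\bz$ with bilinearity of the Euler form,
\[
\dim\Hom(T_i(n),S)=\LF{[T_i(n)]}{[S]}=\LF{[T_i]}{[S]}+n\,\deg(T_i)\,\LF{\bz}{[S]}.
\]
Here $\LF{[T_i]}{[S]}=\dim\Hom(T_i,S)\geq0$, and $\LF{\bz}{[S]}=\dim\Hom(Z,S)=\deg(S)$ as computed in the proof of Lemma~\ref{lem:omnipresent}; since $\deg(T_i)\geq1$ and $\deg(S)\geq1$ this gives $\dim\Hom(T_i(n),S)\geq n\,\deg(T_i)\,\deg(S)\geq n$.

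\emph{Where the difficulty sits.} No deep obstacle remains once \eqref{eq:mult_right_mut} and Lemma~\ref{lem:omnipresent} are available --- the genuine content was front-loaded into those. The one step using the structure theory rather than formal bookkeeping is the claim that $\rho^{n\bp}$ carries $T$ back into $\coh\YY$ as a bundle; this is exactly where the hypothesis $\mu T_i>0$ is needed, so that the universal-extension description of $\rho$ applies directly and the whole process stays inside the cone of positive-slope sheaves.
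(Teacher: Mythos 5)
Your proposal is correct and follows essentially the same route as the paper: part (a) via the degree--rank formula induced by $\rho^{n\bp}$ (your computation $(\deg Z,\rk Z)=(0,\bp)$ recovers exactly the paper's matrix identity \eqref{eq:mut_iterated}), and part (b) via \eqref{eq:mult_right_mut} together with the omnipresence of $Z$ from Lemma~\ref{lem:omnipresent}. The only added value is that you spell out why $T(n)$ remains a tilting \emph{bundle} and why the Hom-dimensions equal Euler-form values (vanishing of the relevant $\Ext^1$ groups), details the paper leaves implicit.
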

\begin{proof}
We put $d_i=\deg T_i$, $r_i=\rk T_i$ and use similarly $d_i(n)$ and $r_i(n)$ for the
degree-rank data of $T_i(n)$. Then
\begin{equation} \label{eq:mut_iterated}
(d_i(n),r_i(n))=(d_i,r_i)\left(\begin{array}{cc} 1 & n\bp\\ 0&1
\end{array}\right) = (d_i,r_i+n\bp\,d_i).
\end{equation}
By assumption we have $r_i>0$ and $d_i\geq1$, hence $\rk(T_i(n))=r_i+n\bp\,d_i>n\bp$. Moreover,
the sequence of slopes
$$
\mu(T_i(n))=\frac{d_i}{r_i+n\bp\,d_i}
$$
obviously converges to zero. This proves assertion (a).

Concerning (b), we apply formula~\eqref{eq:mult_right_mut} to the class $y=[T_i]$ and obtain
$[T_i(n)]= [T_i]+ n\,d_i[Z]$, hence
$\dim \Hom(T_i(n), S)=\LF{[T_i(n)]}{[S]}=\LF{[T_i]}{[S]}+n\,d_i\LF{[Z]}{[S]}\geq n$
where the inequality uses that $d_i\geq1$ and $Z$ is omnipresent on $\YY$ by Lemma~\ref{lem:omnipresent}.
\end{proof}

\begin{proof}[Proof of Theorem~\ref{thm:min_width}, part (ii)]
Next we assume that $\chi_\XX<0$. In several steps we are going to construct a sequence of tilting bundles
$\sT(n)$, $n\geq0$, on $\XX$ satisfying the claims of Theorem~\ref{thm:min_width}.

\emph{Step 1.} Let $\bq=(q_1,\ldots,q_s)$ be the weight type of $\XX$. After reordering we can write $\bq=\bp+\bh$ where $\bp=(p_1,\ldots,p_t,1,\ldots,1)$ is tubular and $\bh=(0,\ldots,0,h_r\ldots,h_s)$ has
entries $h_i\geq1$ for $i=r,\ldots,s$. For each of the (distinct) exceptional points $x_i$ of $\XX$ with
$i=r,\ldots,s$ we fix a linear branch $B(i)$ of length $h_i$ which is concentrated in $x_i$. Thus
$B(i)=\oplus_{j=1}^{h_i} U_j(i)$ where
\begin{equation} \label{eq:branch}
B(i):\quad U_{h_i}(i)\epi U_{h_i-1}(i)\epi \cdots \epi U_1(i)
\end{equation}
consists of a chain of finite length sheaves concentrated in $x_i$ such that each $U_j(i)$ has length $j$, and hence $U_1(i)$ is exceptional simple on $\XX$. We call $U_{h_i}(i)$ the \emph{root of $B(i)$}.
Then, putting $B=B(r)\oplus B(r+1)\oplus \cdots \oplus B(s)$, the right perpendicular category $\rperp{B}$ of $B$ in $\coh\XX$ can be identified with the category of coherent sheaves on a
weighted projective line $\YY$ having tubular type $\bp$, see~\cite{Geigle:Lenzing:1991}. Moreover, if $T$ is a tilting bundle on $\YY$, then $\bar{T}=T\oplus B$ is a tilting sheaf on $\XX$, whose bundle part `lives on' $\YY$. For further details we refer to \cite[Theorem 3.1 and Theorem 4.1]{Lenzing:Meltzer:1996}.

\emph{Step 2.} Keeping the notations of Proposition~\ref{prop:width_tubular} we extend the tilting bundles $T(n)=\rho^{n\bp} T$ on $\YY$ obtained by the preceding step to the tilting sheaf
$\bT(n)=T(n)\oplus B$ on $\XX$. Note that the embedding $\coh\YY\incl \coh\XX$ preserves the
rank but not the degree. In fact, the association $\deg_\YY Y\mapsto \deg_\XX Y$, for $Y$ in $\coh\YY$, does not extend to a mapping $\Knull(\coh\YY)\ra \Knull(\coh\XX)$, see
\cite[Section 9]{Geigle:Lenzing:1991}.  The following lemma will allow us to bypass this technical
difficulty. We note that we continue to use the notations of Proposition~\ref{prop:width_tubular}.
\begin{lem}
For each $i=1,\ldots,m$ the sequence $(\mu_\XX(T_i(n)))_n$ of slopes, formed in $\coh\XX$, converges to $\frac{1}{\bp}\deg_\XX Z$.
Here $Z=\si^{-1}\rho(S_0)$ is formed in $\coh\YY$ with $S_0$ an ordinary simple sheaf on $\YY$, and $\bp=\lcm(p_1,\ldots,p_t)$.
\end{lem}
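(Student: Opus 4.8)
The plan is to read off the rank and the degree of $T_i(n)$ in $\coh\XX$ from the Grothendieck group, and then to pass to the limit in the quotient $\mu_\XX(T_i(n))=\deg_\XX(T_i(n))/\rk_\XX(T_i(n))$. The rank is the harmless half: the embedding $j\colon\coh\YY\incl\coh\XX$ preserves rank, so formula~\eqref{eq:mut_iterated} already gives $\rk_\XX(T_i(n))=\rk_\YY(T_i(n))=r_i+n\bp\,d_i$, where $r_i=\rk T_i$ and $d_i=\deg_\YY T_i$. The degree is where the difficulty flagged in Step~2 shows up: since $\deg_\YY$ does not transport along $j$, one may not simply evaluate $\deg_\XX$ on the $K_0$-identity~\eqref{eq:mult_right_mut}, which lives in $\Knull(\coh\YY)$.

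The way around this is to work one level higher. Being the inclusion of the right perpendicular category of an exceptional sequence, $j$ is an \emph{exact} functor of abelian categories (perpendicular calculus, cf.~\cite{Geigle:Lenzing:1991}); in particular the universal extensions in $\coh\YY$ defining $\rho^{n\bp}T_i$ stay short exact in $\coh\XX$. Hence $j$ induces a group homomorphism $j_*\colon\Knull(\coh\YY)\ra\Knull(\coh\XX)$ with $\rk_\XX\circ j_*=\rk_\YY$ and with $\deg_\XX(j_*[Y])$ equal to the $\coh\XX$-degree of $Y$ for every $Y$ in $\coh\YY$. Applying $j_*$ to~\eqref{eq:mult_right_mut} with $y=[T_i]$, so that $\deg(y)=d_i$, transports the identity to
$$[T_i(n)]=[T_i]+n\,d_i\,[Z]\qquad\text{in }\Knull(\coh\XX).$$

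It then remains to evaluate the linear forms $\deg_\XX$ and $\rk_\XX$. From the matrix descriptions in Section~\ref{ssect:tub_mut} of the actions of $\si$ and $\rho$ on degree-rank pairs, a one-line computation shows that $Z=\si^{-1}\rho(S_0)$ has degree-rank pair $(0,\bp)$ in $\coh\YY$, so that $\rk_\XX Z=\rk_\YY Z=\bp$. Thus $\rk_\XX(T_i(n))=r_i+n\,d_i\,\bp$ and $\deg_\XX(T_i(n))=\deg_\XX(T_i)+n\,d_i\,\deg_\XX(Z)$. Since each $T_i$ has strictly positive slope and positive rank, $d_i$ is a positive integer, hence the coefficient $d_i\bp$ of $n$ is non-zero and
$$\mu_\XX(T_i(n))=\frac{\deg_\XX(T_i)+n\,d_i\,\deg_\XX(Z)}{r_i+n\,d_i\,\bp}\longrightarrow\frac{d_i\,\deg_\XX(Z)}{d_i\,\bp}=\frac{1}{\bp}\deg_\XX Z\qquad(n\to\infty),$$
which is the assertion. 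The one genuine obstacle is the middle paragraph: one must be certain that the $\coh\YY$-bookkeeping behind~\eqref{eq:mult_right_mut} is carried over faithfully to $\coh\XX$, and exactness of the perpendicular embedding is exactly what makes this legitimate. Everything else is the elementary estimate above.
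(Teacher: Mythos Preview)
Your proof is correct and follows essentially the same route as the paper: transport the $K_0$-identity~\eqref{eq:mult_right_mut} from $\Knull(\coh\YY)$ to $\Knull(\coh\XX)$, apply $\deg_\XX$ and $\rk_\XX$, and take the limit of the quotient. Your version is slightly more explicit in two places --- you justify the transport via exactness of the perpendicular embedding, and you verify $\rk Z=\bp$ directly from the matrix action --- whereas the paper simply asserts the transport and reads the rank off from~\eqref{eq:mut_iterated}; but the argument is the same.
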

\begin{proof}
We first note that formula~\eqref{eq:mult_right_mut} holds in $\Knull(\YY)$, hence in $\Knull(\XX)$. Thus for each $y\in \Knull(\YY)$ we have
\begin{equation}
\deg_\XX(\rho^{n\bp}(y))=\deg_\XX(y)+n\,\deg_\YY(y)\,\deg_\XX(Z).
\end{equation}
By formula~\eqref{eq:mut_iterated} we obtain $\rk(\rho^{n\bp}y)=\deg_\YY(y)+n\,\bp\,\rk(y)$. Thus the slope sequence
$$
\mu_\XX(\rho^{n\bp}y)=\frac{\deg_\XX(y)+n\,\deg_\YY(y)\,\deg_\XX(Z)}{\rk(y)+n\,\bp\,\deg_\YY(y)}
$$
converges to $\frac{1}{\bp}\deg_\XX(Z)$.
\end{proof}

\emph{Step 3.} By means of a sequence of H\"{u}bner reflections, we next transform $\bT(n)$ into a tilting bundle $\sT(n)$ on $\XX$. Let $U_h$ be the root of a branch $B=B(i)$. Then $U_h$ is a formal sink
of $\bT=\bT(n)$, and reflection at $U_h$ yields a new tilting sheaf $\bT/U_h\oplus U_h^*(n)$, where
$U_h^*(n)$ is the kernel term of the reflection sequence
\begin{equation} \label{eq:reflection_sequence}
0\lra U_h^*(n) \lra \bigoplus_{j=1}^m T_j^{\ka_j}(n)\lra U_h \lra 0,
\end{equation}
compare formula \eqref{eq:Huebner:reflection}. Since some exponent $\ka_j$ is
non-zero, we see that $U_h^*(n)$ is an exceptional vector bundle of rank $r(n)=\sum_{j=0}^m\ka_j\rk (T_j(n))>n$. We next show that the slope sequence $\mu_\XX(U_h^*(n))$ also converges to $\frac{1}{\bp}\deg_\XX(Z)$. Clearly,
$$
\mu_\XX(U_h^*(n))=\mu_\XX(\bigoplus_{j=1}^m T_j^{\ka_j}(n))-\frac{\deg_\XX(U_h)}{r(n)}.
$$
Now the first summand $\al(n)$ on the right hand side is a convex combination of the slopes
$\mu_\XX(T_j(n))$, and thus yields a sequence $(\al(n))$ converging to $\frac{1}{\bp}\deg_\XX(Z)$, while the second summand $\be(n)$ yields a sequence $(\be(n))$ converging to zero. This proves the claim for this first step. We now continue reflecting roots of branches until all branches are exhausted, the resulting sequence of tilting bundles $\sT(n)$ then satisfies all claims. This finishes the proof of Theorem~\ref{thm:min_width} (ii).
\end{proof}

To construct explicit examples, the following result is useful.

\begin{prop}\label{prop:factorization}
Assume $\YY$ is tubular, and $T=\bigoplus_{0\leq\vx\leq\vc}T_\vx$ is a tilting bundle on $\YY$
with $\End(T)$ canonical, accordingly with $\Hom(T_\vx,T_\vy)=\Hom(\Oo(\vx),\Oo(\vy))$ for all $0\leq\vx,\vy\leq\vc$. We
assume that the width $\width(T)$ of $T$ is strictly less than $\bp=\bp(\YY)$, the maximal possible one. Let further $U$
be any sheaf of finite length. Then each morphism $T_\vx\ra U$ factors through any non-zero
morphism $u:T_\vx\ra T_\vc$.
\end{prop}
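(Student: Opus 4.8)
The plan is to transport the whole situation, by a triangle autoequivalence, to the canonical tilting bundle, where $T_\vx$ and $T_\vc$ become line bundles and the claim can be read off from the heredity of $\coh\YY$. We may assume $\vx\neq\vc$, for otherwise $u\in\End(T_\vc)\setminus\{0\}=k^\ast$ is an isomorphism and the assertion is trivial. Since $\End(T)$ and $\End(T_\can)$ are both isomorphic to the canonical algebra $\La$, and the hypothesis $\Hom(T_\vy,T_\vz)=\Hom(\Oo(\vy),\Oo(\vz))$ identifies the two presentations compatibly with the vertices, the composition $\mathrm{R}\Hom(T_\can,-)^{-1}\circ\mathrm{R}\Hom(T,-)$ is a triangle autoequivalence $F$ of $\Der(\coh\YY)$ with $F(T_\vy)=\Oo(\vy)$ for all $0\leq\vy\leq\vc$, carrying $u$ to a nonzero morphism $\bar u\colon\Oo(\vx)\to\Oo(\vc)$. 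Being a nonzero morphism of line bundles, $\bar u$ is a monomorphism in $\coh\YY$, so I obtain a short exact sequence $0\to\Oo(\vx)\xrightarrow{\bar u}\Oo(\vc)\to D\to0$ with $D$ a sheaf of finite length.

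The crucial point — and the step I expect to be the main obstacle — is to show that $F(U)$ is isomorphic to $W[1]$ for a vector bundle $W$ on $\YY$; this is where the hypothesis $\width(T)<\ovp$ is used. Granting it, the conclusion follows formally. Indeed, using $F(T_\vy)=\Oo(\vy)$ one has $\Hom(T_\vy,U)=\Hom_{\Der(\coh\YY)}(\Oo(\vy),W[1])=\Ext^1_{\coh\YY}(\Oo(\vy),W)$ for $\vy\in\{\vx,\vc\}$, and precomposition with $u$ corresponds to precomposition with $\bar u$. Applying $\Ext^\bullet_{\coh\YY}(-,W)$ to $0\to\Oo(\vx)\xrightarrow{\bar u}\Oo(\vc)\to D\to0$ and using that $\coh\YY$ is hereditary, so $\Ext^2(D,W)=0$, one sees that $\bar u^\ast\colon\Ext^1(\Oo(\vc),W)\to\Ext^1(\Oo(\vx),W)$ is surjective; translated back, $u^\ast\colon\Hom(T_\vc,U)\to\Hom(T_\vx,U)$ is surjective, i.e.\ every morphism $T_\vx\to U$ factors through $u$.

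It remains to establish the key point, and here I would argue as follows. Since $\Ext^1(T_i,U)=\dual\Hom(U,T_i(\vom))=0$ for the vector bundle $T_i(\vom)$ and the finite length sheaf $U$, the sheaf $U$ lies in $\mod\La$ in the sense of Section~\ref{sect:proofs}; hence $F(U)$ lies in $\coh\YY\vee\coh\YY[1]$, say $F(U)=Y\oplus Z[1]$ with $Y,Z\in\coh\YY$, and one checks that $Z$ is a vector bundle (a nonzero torsion subsheaf of $Z$ would be killed by the tilting sheaf $T_\can$). It thus suffices to exclude a nonzero sheaf part $Y$. If $F$ were of point type — equivalently, if $F$ preserved ranks up to sign — then $F(T_\vy)=\Oo(\vy)$ would force each $T_\vy$ to be a line bundle, so, since $t(\YY)\geq3$ for tubular $\YY$, Theorem~\ref{thm:main0} would give $T\cong T_\can$ up to a line bundle twist and hence $\width(T)=\ovp$, contrary to hypothesis. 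So $F$ genuinely alters ranks; inspecting the induced action of $F$ on the $(\deg,\rk)$–plane — an element of $\mathrm{SL}_2(\ZZ)$ determined by $F(T_0)=\Oo$ and $F(T_\vc)=\Oo(\vc)$ — together with the bound $\width(T)<\ovp$, equivalently using the description of the autoequivalences of $\Der(\coh\YY)$ for tubular $\YY$ by tubular mutations (Section~\ref{ssect:tub_mut}), one sees that $F$ carries the category $\cohnull{\YY}$ of finite length sheaves into $\vect\YY[1]$. Hence $Y=0$ and $F(U)\cong Z[1]$ with $Z$ a vector bundle, which is the claim; I expect the verification of this last passage (the precise interplay between the width bound and the position of $F(\cohnull{\YY})$) to be the technical heart of the proof.
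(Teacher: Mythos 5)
Your overall architecture is sound and is precisely the mirror image of the paper's argument: the paper transports the cone of $u$ from the canonical side to the $T$-side via the equivalence $\phi$ with $\phi(\Oo(\vx))=T_\vx$, shows that this cone $V_\vx=\phi(U_\vx)$ is a vector bundle, and concludes from $\Ext^1(V_\vx,U)=0$; you transport $U$ in the opposite direction via $F=\phi^{-1}$ and conclude from $\Ext^2(D,W)=0$. Your reduction to the key claim is correct, and the two key claims are in fact equivalent: $F(\cohnull{\YY})\subseteq\vect\YY[1]$ holds if and only if $\phi(\cohnull{\YY})\subseteq\vect\YY$ in degree zero, if and only if every nonzero $u\colon T_\vx\to T_\vc$ is a monomorphism with torsion-free cokernel; all three come down to the sign of the $(1,2)$-entry of the matrix in $\mathrm{SL}_2(\ZZ)$ by which $F$ acts on $(\deg,\rk)$.

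The gap is exactly the step you flag as ``the technical heart,'' and your sketch does not close it. Excluding that $F$ preserves ranks (your appeal to Theorem~\ref{thm:main0}, which parallels the paper's exclusion of the case $q=\infty$) only rules out that this entry vanishes. It leaves two cases: $F(\cohnull{\YY})\subseteq\Clu_{q'}[1]$ with $q'<0$, which is what you need, and $F(\cohnull{\YY})\subseteq\Clu_{q'}[0]$ with $q'>\ovp$, where every nonzero $u\colon T_\vx\to T_\vc$ is an \emph{epimorphism} with vector-bundle kernel $K$, and a morphism $T_\vx\to S$ with $S$ simple and $\Hom(K,S)\neq0$ does \emph{not} factor through $u$. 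The stated hypotheses do not by themselves exclude the second case: on $\YY$ of type $(2,2,2,2)$ the autoequivalence $\rho^{2}\si\rho^{-1}$ carries $T_\can$ to a tilting bundle $T$ with $\End(T)$ canonical, all summands of degree $1$, rank vector $(3,2,2,2,2,1)$ and width $2/3<2=\ovp$; here a nonzero $u\colon T_{\vec{0}}\to T_\vc$ goes from a rank-three bundle to a line bundle and cannot be injective. So the missing step is not a routine verification from the width bound and the $(\deg,\rk)$-action alone; it needs an extra input, in effect $\rk T_\vx\le\rk T_\vc$, which does hold for the bundles $\rho^{n\bp}T_\can(\vc)$ to which the proposition is later applied. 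For what it is worth, the paper's own proof passes over the same point (after excluding $q=\infty$ it asserts that $V_\vx$ is a vector bundle without excluding $V_\vx=\ker(u)[1]$), so you have correctly located the one genuinely delicate step — but as written your proposal does not establish it.
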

We note that the assertion is wrong if $T$ attains the maximal possible width $\bp$. Indeed,
by Theorem~\ref{thm:max_width}, we then may assume that $T_\vx=\Oo(\vx)$ holds for each $\vx$.
If $S$ denotes the exceptional simple sheaf defined by exactness of $0\ra \Oo(\vc-\vx_1)\ra\Oo(\vc)\ra S\ra 0$, then $\Hom(\Oo(\vc-\vx_1),S(\vom))=k$ but we further have
$\Hom(\Oo(\vc),S(\vom))=0$.

\begin{proof}[Proof of Proposition~\ref{prop:factorization}.] Since $\End(T)$ is canonical, there
exists a self-equivalence $\phi$ of $\Der(\coh\YY)$ mapping $\Oo(\vx)$ to $T_\vx$ for each
$0\leq\vx\leq\vc$. Any triangle $\mu: T_\vx\stackrel{u}{\lra} T_\vc\ra V_\vx\ra$ is thus given
as the image under $\phi$ of a triangle represented by a short exact sequence
$\eta:0\ra \Oo(\vx)\stackrel{v}{\lra}\Oo(\vc)\ra U_\vx\ra 0$ in $\coh\XX$. Having finite length, all the $U_\vx$ have the same slope. Hence all the $V_\vx=\phi(U_\vx)$ have the same slope $q$. If $q=\infty$, then all $V_\vx$ have rank zero, implying $\rk T_\vx=\rk T_\vc$ for each $0\leq \vx\leq \vc$. Then Theorem~\ref{thm:main0} implies that $T=T_\can$ up to a line bundle twist
and hence $\width(T)=\bp$, contradicting our assumption on $T$.

Thus each $V_\vx$ is a vector bundle and thus the triangle $\mu$ yields an exact sequence
$\mu: 0\ra T_\vx\stackrel{u}{\lra}T_\vc\ra V_\vx\ra 0$ in $\coh\YY$ whose terms are vector bundles.
Since $\Ext^1(-,U)$ vanishes on $\vect\YY$ for each $U$ of finite length, the sequence
$$
0\ra \Hom(V_\vx,U)\lra \Hom(T_\vc,U)\stackrel{-\circ u}{\lra} \Hom(T_\vx,U)\ra 0
$$
is exact, thus proving the claim.
\end{proof}
We are now constructing an explicit sequence of tilting bundles $T^*(n)$ on the weighted projective line $\XX$ of weight type $(2,4,7)$ illustrating the arguments of this section.
We start with the tilting bundle $T=T_\can(\vc)$ on the tubular weighted projective line $\YY$ of
weight type $(2,4,4)$, and form the sequence of $T(n)$ of tilting bundles of Proposition~\ref{prop:width_tubular}. Fixing a branch $B: U_3\epi U_2\epi U_1$ of length $3$ concentrated in the third exceptional point of $\XX$ we identify $\coh\YY$ with the perpendicular subcategory $\rperp{B}$ in $\coh\XX$, and then enlarge $T(n)$ to the tilting sheaf $\bT(n)=T(n)\oplus B$. The endomorphism ring of $\bT(n)$ is then given by the following quiver
with relations.
$$
\xymatrix@C10pt@R15pt{
&&&&&&&& \\
&&&\vx_2\ar[rr]\ar@/^0.8pc/@{..}[rrrrrrrrrdd]|{\sz{n}}&&2\vx_2\ar[rr]\ar@/^1.1pc/@{..}[rrrrrrrdd]|{\sz{n}}&&3\vx_2\ar[rrdd]\ar@/^1.4pc/@{..}[rrrrrdd]|{\sz{n}} \\
&&&&&&&&&&&&&&&&\\
\bT(n):&\vec{0}\ar[rrrr]\ar[rruu]\ar[rrdd]\ar@/^6.5pc/@{..}[rrrrrrrrrrr]|{\sz{4n+1}}\ar@/^1.4pc/@{..}[rrrrrrrr]&&&&\vx_1\ar[rrrr]\ar@/_1pc/@{..}[rrrrrrr]|{\sz{2n}}&&
&&\vc\ar[rrr]|{\sz{8n}}&&&b_3\ar[r]&b_2\ar[r]&b_1\\
&&&&&&&& \\
&&&\vx_3\ar[rr]\ar@/_0.8pc/@{..}[rrrrrrrrruu]|{\sz{n-1}}&&2\vx_3\ar[rr]\ar@/_1.1pc/@{..}[rrrrrrruu]|{\sz{n}}&&3\vx_3\ar[rruu]\ar@/_1.4pc/@{..}[rrrrruu]|{\sz{n}}\\
}
$$
This uses Proposition~\ref{prop:factorization}.
Applying H\"{u}bner reflections in the vertices $b_3,b_2,b_1$ in this order, we finally obtain a sequence of tilting bundles on $\XX$ whose endomorphism rings are given as follows.
$$
\def\bs{b^\ast}
\xymatrix@C12pt@R15pt{
&&&&&2\vx_2\ar[rr]\ar[ddd]|{\sz{n}}\ar@/^1.3pc/@{..}[rrrrrrrddd]|{\sz{8n^2}}&&3\vx_2\ar[llddd]|{\sz{n}}\ar@/^1.6pc/@{..}[rrrrrddd]|{\sz{8n^2-1}}& \\
&&&\vx_2\ar[rru]\ar[rrdd]|{\sz{n}}\ar@/^2pc/@{..}[rrrrrrrrrdd]|{\sz{8n^2}}& \\
&&&\vx_1\ar[rrd]|{\sz{2n}}\ar@/^1.6pc/@{..}[rrrrrrrrrd]|{\sz{16n^2-1}}&&&&&&&&&&&\\
\sT(n):&\vec{0}\ar[rru]\ar[rruu]\ar[rrrr]|{\sz{4n+1}}\ar[ddrr]\ar@/^10.5pc/@{..}[rrrrrrrrrrr]|{\sz{32n^2+8n+1}}&&&&\bs_1\ar[rr]&&
\bs_2\ar[rr]&&\bs_3\ar[rrr]|{\sz{8n}}&&&\vc\\
&&&&&&&& \\
&&&\vx_3\ar[rrd]\ar[rruu]|{\sz{n-1}}\ar@/_1.3pc/@{..}[rrrrrrrrruu]|{\sz{8n^2-8n}}&&&\\
&&&&&2\vx_3\ar[rr]\ar[uuu]|{\sz{n}}\ar@/_1.3pc/@{..}[rrrrrrruuu]|{\sz{8n^2}}&&3\vx_3\ar[uuull]|{\sz{n}}\ar@/_1.6pc/@{..}[rrrrruuu]|{\sz{8n^2-1}}&\\
}
$$
The degree, rank and slope data for the tilting bundles $\sT(n)$  are collected in the following table.
$$
\def\bs{b^\ast}
\begin{array}{cccccccccccc}
\vec{0}            &\vx_1               &\vx_2               &2\vx_2       &3\vx_2&\vx_3\\[1pt] \hline\\[-8pt]

\frac{8n+24}{16n+1}&\frac{12n+38}{24n+1}&\frac{10n+31}{20n+1}&\frac{12n+38}{24n+1}&\frac{14n+45}{28n+1}&\frac{10n+31}{20n+1}\\
                   &                    &                    &                    &                    &               &\\2\vx_3&3\vx_3&\vc&\bs_1&\bs_2&\bs_3\\[1pt] \hline\\[-8pt]
\frac{12n+44}{24n+1}&\frac{14n+48}{28n+1}&\frac{16n+52}{32n+1}&\frac{128n^2+416n-12}{236n^2+8n}&\frac{128n^2+416n-8}{256n^2+8n}&\frac{128n^2+416n-4}{256n^2+8n}\\
\end{array}
$$
We observe that all the slope sequences converge to $\frac{1}{2}$. Further, all relations for $\sT(n)$ end in the vertex $\vc$. This can
be rephrased as follows: Let $Q(n)$ be the wild quiver, obtained from the quiver of $\sT(n)$ by removing the last vertex $\vc$.
Then $\End(\sT(n))$ is obtained as a one-point extension of the path algebra $kQ(n)$ of $Q(n)$.

\subsection*{Acknowledgements}
The first named author was supported by DGAPA and PAPIIT,
  Universidad Nacional Aut\'onoma de M\'exico. The second named author
  was supported by the Max Planck Institute for Mathematics, Bonn.

\end{document}